\newcommand{\be}{\begin{equation}}
\newcommand{\ee}{\end{equation}}
\newcommand{\ba}{\begin{array}}
\newcommand{\ea}{\end{array}}
\newcommand{\bea}{\begin{eqnarray}}
\newcommand{\eea}{\end{eqnarray}}
\newcommand{\beas}{\begin{eqnarray*}}
\newcommand{\eeas}{\end{eqnarray*}}
\newtheorem{thm}{Theorem}[section]
\newtheorem{prop}{Proposition}[section]
\newtheorem{remark}{Remark}[section]
\newtheorem{lem}{Lemma}[section]
\newtheorem{defi}{Definition}[section]
\numberwithin{equation}{section}
\begin{document}

\begin{frontmatter}

\title{A stabilized parametric finite element method for surface diffusion \\ with an arbitrary surface energy}

\author[1]{Yulin Zhang}
\address[1]{School of Mathematical Sciences, MOE-LSC and Institute of Natural Sciences, Shanghai Jiao Tong University, Shanghai 200240, P. R. China}
\ead{yulin.zhang@sjtu.edu.cn}

\author[2]{Yifei Li\corref{4}}
\address[2]{Department of Mathematics, National
              University of Singapore, Singapore 119076}
\ead{liyifei@nus.edu.sg}
\cortext[4]{Corresponding author.}

\author[1]{Wenjun Ying}
\ead{wying@sjtu.edu.cn}


\begin{abstract}

We proposed a structure-preserving stabilized parametric finite element method (SPFEM) for the evolution of closed curves under anisotropic surface diffusion with an arbitrary surface energy $\hat{\gamma}(\theta)$. By introducing a non-negative stabilizing function $k(\theta)$ depending on $\hat{\gamma}(\theta)$, we obtained a novel stabilized conservative weak formulation for the anisotropic surface diffusion. A SPFEM is presented for the discretization of this weak formulation. We construct a comprehensive framework to analyze and prove the unconditional energy stability of the SPFEM under a very mild condition on $\hat{\gamma}(\theta)$. This method can be applied to simulate solid-state dewetting of thin films with arbitrary surface energies, which are characterized by anisotropic surface diffusion and contact line migration. Extensive numerical results are reported to demonstrate the efficiency, accuracy and structure-preserving properties of the proposed SPFEM with anisotropic surface energies $\hat{\gamma}(\theta)$ arising from different applications.

\end{abstract}



\begin{keyword}
Geometric flows, parametric finite element method, anisotropy surface energy, structure-preserving, area conservation, energy-stable
\end{keyword}

\end{frontmatter}


\section{Introduction}

Surface diffusion is a widespread process involving the movement of adatoms, molecules and atomic clusters at solid material interfaces \cite{oura2013surface}. Due to different surface lattice orientation, an anisotropic evolution process is generated for a solid material, which is called \textit{anisotropic surface diffusion} in the literature. Surface diffusion with an anisotropic surface energy plays an important role as a crucial mechanism and/or kinetics in various fields such as epitaxial growth \cite{fonseca2014shapes,gurtin2002interface}, surface phase formation \cite{ye2010mechanisms}, heterogeneous catalysis \cite{randolph2007controlling}, and other pertinent fields within surface and materials science \cite{shustorovich1991metal,barrett2007variational,shen2004direct}. In fact, broader and consequential applications of surface diffusion have been discovered in materials science and solid-state physics, notably in areas such as the crystal growth of nanomaterials \cite{gilmer1972simulation,gomer1990diffusion} and solid-state dewetting \cite{srolovitz1986capillary,wang2015sharp,jiang2020sharp,thompson2012solid, ye2010mechanisms,jiang2012phase,bao2017stable,jiang2018solid}.

\begin{figure}[t!]
\centering
\includegraphics[width=0.5\textwidth]{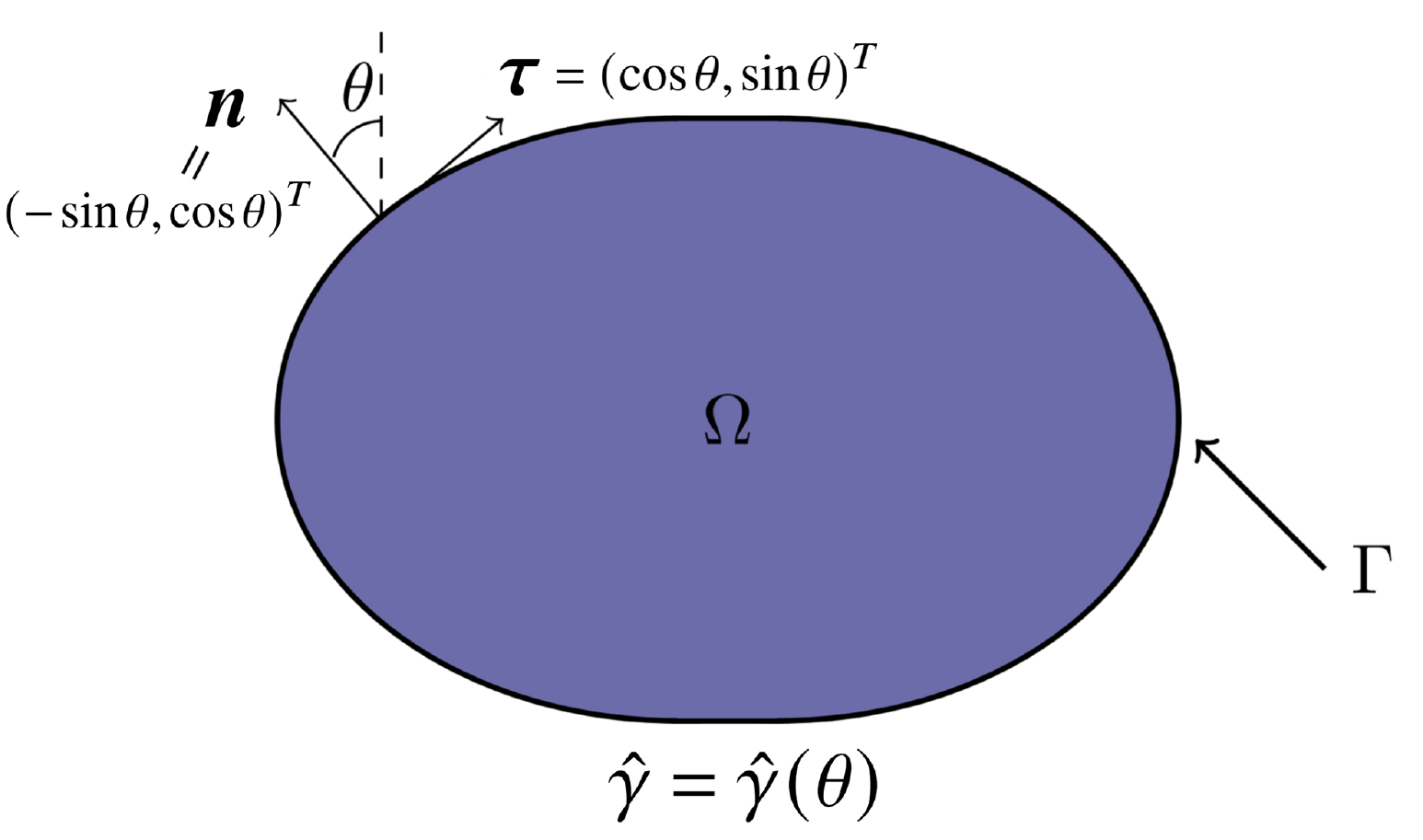}
\caption{An illustration of a closed curve under anisotropic surface diffusion with an anisotropic surface energy $\hat{\gamma}(\theta)$, while $\theta$ is the angle between the $y$-axis and the unit outward normal vector $\boldsymbol{n}=\boldsymbol{n}(\theta)\coloneqq(-\sin\theta,\cos\theta)^T$. $\boldsymbol{\tau}=\boldsymbol{\tau}(\theta)\coloneqq (\cos\theta,\sin\theta)^T$ represents the unit tangent vector.}
\label{fig: illustration of surface diffusion}
\end{figure}

 As shown in Fig.~\ref{fig: illustration of surface diffusion}, let $\Gamma\coloneqq\Gamma(t)$ be a closed curve in two dimensions (2D) associated with a given anisotropic surface energy $\hat{\gamma}(\theta)>0$, where $\theta\in2\pi\mathbb{T}\coloneqq\mathbb{R}/2\pi\mathbb{Z}$ represents the angle between the vertical axis and unit outward normal vector $\boldsymbol{n}=\boldsymbol{n}(\theta)\coloneqq(-\sin\theta,\cos\theta)^T$. It should be noted that the anisotropy  can also be viewed as a function $\gamma(\boldsymbol{n})$ of the normal vector $\boldsymbol{n}$ \cite{jiang2019sharp,jiang2020sharp,taylor1992ii}. While $\gamma(\boldsymbol{n}) = \gamma(-\sin\theta, \cos\theta)$ is equivalent to $\hat{\gamma}(\theta)$ by the one-to-one correspondence $\boldsymbol{n}(\theta)=(-\sin\theta, \cos\theta)^T$, the $\hat{\gamma}(\theta)$ formulation is often more convenient and straightforward in 2D.

 Suppose $\Gamma$ is represented by $\boldsymbol{X}\coloneqq\boldsymbol{X}(s,t)=(x(s,t),y(s,t))^T$, where $s$ denotes the arc-length parameter, and $t$ represents the time. The motion of $\Gamma$ under anisotropic surface diffusion is governed by the following geometric flow \cite{cahn1994overview,mullins1957theory}: \begin{equation}\label{eqn:surface diffusion}
    \partial_t\boldsymbol{X}=\partial_{ss}\mu\boldsymbol{n},
\end{equation} where $\mu$ is the weighted curvature (or chemical potential) defined as \begin{equation}\label{eqn:weighted curvature}
    \mu\coloneqq[\hat{\gamma}(\theta)+\hat{\gamma}^{\prime\prime}(\theta)]\kappa
\end{equation} with $\kappa\coloneqq-(\partial_{ss}\boldsymbol{X})\cdot\boldsymbol{n}$ being the curvature. 

The anisotropic surface diffusion \eqref{eqn:surface diffusion} is a fourth-order and highly nonlinear geometric flow, which possesses two major geometric properties, i.e., the area conservation and the energy dissipation. Let $A_c(t)$ be the area of the region $\Omega(t)$ enclosed by $\Gamma(t)$, and $W_c(t)$ be the total surface free energy, which are defined as \begin{equation}
    A_c(t)\coloneqq\int_{\Omega(t)}1\,\mathrm{d}x\mathrm{d}y=\int_{\Gamma(t)}y(s,t)\partial_sx(s,t)\,\mathrm{d}s,\qquad W_c(t)\coloneqq\int_{\Gamma(t)}\hat{\gamma}(\theta)\,\mathrm{d}s.
\end{equation} One can prove that \cite{bao2017parametric,bao2017stable,barrett2007variational} \begin{equation}
    \frac{\mathrm{d}}{\mathrm{d}t}A_c(t)=0,\qquad\frac{\mathrm{d}}{\mathrm{d}t}W_c(t)=-\int_{\Gamma(t)}|\partial_s\mu|^2\,\mathrm{d}s\leq 0,
\end{equation} which immediately implies the anisotropic surface diffusion \eqref{eqn:surface diffusion}--\eqref{eqn:weighted curvature} satisfies the area conservation and energy dissipation, i.e., \begin{equation}
    A_c(t)\equiv A_c(0),\qquad W_c(t)\leq W_c(t_1)\leq W_c(0),\qquad\forall t\geq t_1\geq0.
\end{equation}

When $\hat{\gamma}(\theta)\equiv 1,\forall\theta\in 2\pi\mathbb{T}$, the weighted curvature $\mu$ reduces to $\kappa$, and it is referred to as \textit{isotropic} surface energy. If $\hat{\gamma}(\theta)$ is not a constant function and $\hat{\gamma}(\theta)+\hat{\gamma}^{\prime\prime}(\theta)>0$ for all $\theta\in2\pi\mathbb{T}$, we classify the surface energy as \textit{weakly anisotropic}, otherwise, it is termed \textit{strongly anisotropic}. Typical anisotropic surface energies $\hat{\gamma}(\theta)$ which are widely employed in materials science include: \begin{enumerate}
    \item the $m$-fold anisotropic surface energy \cite{bao2017parametric} \begin{equation}\label{eqn:k-fold anisotropy}
        \hat{\gamma}(\theta)=1+\beta\cos m(\theta-\theta_0),\qquad\theta\in 2\pi\mathbb{T},
    \end{equation} where $m=2,3,4,6,|\beta|<1$ are dimensionless strength constants, $\theta_0\in 2\pi\mathbb{T}$ is a constant. Note that $\hat{\gamma}(\theta)$ is weakly anisotropic when $|\beta|<\frac{1}{m^2-1}$, and strongly anisotropic otherwise.
    \item the ellipsoidal anisotropic surface energy \cite{taylor1994linking} \begin{equation}\label{eqn:ellipsoidal anisotropy}
        \hat{\gamma}(\theta)=\sqrt{a+b\cos^2\theta},\qquad\theta\in2\pi\mathbb{T},
    \end{equation} where $a,b$ are two dimensionless constants satisfying $a>0$ and $a+b>0$.
        \item the Riemannian-like metric (also called BGN) anisotropic surface energy \cite{barrett2008numerical,barrett2008variational} \begin{equation}\label{eqn:BNG anisotropy}
        \hat{\gamma}(\theta)=\sum_{l=1}^L\sqrt{\boldsymbol{n}(\theta)^TG_l\boldsymbol{n}(\theta)},\qquad\boldsymbol{n}=(-\sin\theta,\cos\theta)^T,\qquad\theta\in 2\pi\mathbb{T},
    \end{equation} where $L\geq 1$, $G_l\in\mathbb{R}^{2\times 2}$ positive definite $\forall 1\leq l\leq L$. When $L=1,G_1=\text{diag}(a,a+b)$ in \eqref{eqn:BNG anisotropy}, the Riemannian-like metric anisotropy \eqref{eqn:BNG anisotropy} collapses to the ellipsoidal anisotropic surface energy \eqref{eqn:ellipsoidal anisotropy}.
    \item the piecewise anisotropic surface energy \cite{deckelnick2005computation} \begin{equation}\label{eqn:piecewise anisotropy}
        \hat{\gamma}(\theta)=\sqrt{\left(\frac{5}{2}+\frac{3}{2}\text{sgn}(n_1)\right)n_1^2+n_2^2},
    \end{equation} with $\boldsymbol{n}=(n_1,n_2)^T\coloneqq(-\sin\theta,\cos\theta)^T,\,\,\theta\in 2\pi\mathbb{T}.$
\end{enumerate}

Many numerical methods have been proposed for simulating the evolution of curves under surface diffusion, including the phase-field method \cite{tang2020, du2020phase,jiang2012phase,garcke2023diffuse}, the discontinuous Galerkin method \cite{xu2009local}, the marker particle method \cite{wong2000periodic, du2010tangent} and the parametric finite element method (PFEM) \cite{barrett2007parametric,barrett2008parametric,barrett2020parametric,bansch2005finite}. Among these methods, the energy-stable PFEM (ES-PFEM) by Barrett, Garcke, and N{\"u}rnberg \cite{barrett2007parametric}, also referred to as BGN's method, achieves the best performance in terms of mesh quality and unconditional energy-stability in the isotropic case. The ES-PFEM has been further extended to other geometric flows, such as the solid-state deweting problem \cite{bao2017parametric,jiang2016solid,wang2015sharp}, demonstrating its adaptability and robustness. Furthermore, Bao and Zhao have recently developed a structure-preserving PFEM (SP-PFEM) \cite{bao2021structure,bao2022volume,bao2023structure}, which can preserve the enclosed mass at the fully-discretized level while also maintaining the unconditional energy stability. Extending these PFEMs to anisotropic surface diffusion is a major focus of recent research in surface diffusion. While BGN successfully adapted their methods to a specific Riemannian metric form \cite{barrett2008numerical,barrett2008variational}, designing a SP-PFEM for anisotropic surface diffusion with arbitrary anisotropies  remains challenging.

Lately, based on the $\hat{\gamma}(\theta)$ formulation, Li and Bao introduced a surface energy matrix $G(\theta)$ and extended the ES-PFEM from the isotropic cases to the anisotropic cases \cite{li2021energy}. Due to the absence of a stabilizing term, their method requires a certain restrictive condition to ensure the energy stability. Subsequently, Bao, Jiang, and Li incorporated a stabilizing function within the $\gamma(\boldsymbol{n})$ formulation. They constructed a symmetric surface energy matrix $\boldsymbol{Z}_k(\boldsymbol{n})$ and proposed a symmetrized SP-PFEM for the anisotropic surface diffusion in \cite{bao2023symmetrized2D,bao2023symmetrized3D}. The symmetrized SP-PFEM with the stabilizing function works effectively for symmetric surface energy distributions (i.e., $\gamma(\boldsymbol{n})=\gamma(-\boldsymbol{n})$) to maintain the geometric properties. However, there are different anisotropic surface energies $\hat{\gamma}(\theta)$ which are not symmetrically distributed or do not satisfy the specific condition, such as the $3$-fold anisotropic surface energy \eqref{eqn:k-fold anisotropy} \cite{bao2017parametric,wang2015sharp}. Very recently, based on the $\gamma(\boldsymbol{n})$ formulation, Bao and Li introduced a novel surface energy matrix and established a comprehensive analytical framework to demonstrate the unconditional energy stability of the proposed SP-PFEM \cite{bao2024structure,bao2023unified}. Based on this framework, they successfully reduced the requirement for the anisotropy to $3\gamma(\boldsymbol{n})>\gamma(-\boldsymbol{n}),\,\,\forall\boldsymbol{n}\in\mathbb{S}^1$. 

However, the critical situation $3\gamma(\boldsymbol{n}^*)=\gamma(-\boldsymbol{n}^*),\,\,\boldsymbol{n}^*\in\mathbb{S}^1$ was not addressed in their analytical framework. This is because their framework relies on an estimate of the gradient of $\gamma(\boldsymbol{n})$. The challenge comes from $\gamma(\boldsymbol{n})$ being a function defined on the unit sphere $\mathbb{S}^1$, which makes its gradient complicated to analyze. In contrast, the $\hat{\gamma}(\theta)$ formulation, defined on $2\pi\mathbb{T}$, possesses a simpler derivative and thus allows for a better analysis of the critical situation $3\hat{\gamma}(\theta^*)=\hat{\gamma}(\theta^*-\pi),\,\,\theta^*\in2\pi\mathbb{T}$. Therefore, inspired by their analytical framework, we adopt the $\hat{\gamma}(\theta)$ formulation to further investigate the critical situation.

The main objective of this paper is to propose a structure-preserving stabilized parametric finite element method (SPFEM) for simulating surface diffusion \eqref{eqn:surface diffusion} with the surface energy $\hat{\gamma}(\theta)$ under very mild conditions as 
\begin{enumerate}[(i)]\label{eqn:es condition gamma}
    \item $3\hat{\gamma}(\theta)\geq\hat{\gamma}(\theta-\pi),\qquad\forall\theta\in2\pi\mathbb{T},$
    \item $\hat{\gamma}^\prime(\theta^*)=0$, when $3\hat{\gamma}(\theta^*)=\hat{\gamma}(\theta^*-\pi),\qquad\theta^*\in 2\pi\mathbb{T}$.
\end{enumerate}  Compared to the $\gamma(\boldsymbol{n})$ formulation, the $\hat{\gamma}(\theta)$ formulation has the following advantages: \begin{enumerate}[(i)]
    \item it is more intuitive and has a simpler form in practical applications;
    \item it enables a reduction in the regularity requirement for the anisotropy from $C^2$ to globally $C^1$ and piecewise $C^2$;
    \item it allows for a more convenient discussion of critical situations as $3\hat{\gamma}(\theta^*)=\hat{\gamma}(\theta^*-\pi)$ for some $\theta^*\in 2\pi\mathbb{T}$ in 2D.
\end{enumerate}

The remainder of this paper is organized as follows:  In section \ref{section surface matrix}, we introduce a stabilized surface energy matrix $\hat{\boldsymbol{G}}_k(\theta)$ and propose a new conservative formulation for anisotropic surface diffusion. In section \ref{section sp-pfem}, we present a novel weak formulation based on the conservative form, introduce its spatial semi-discretization, and propose a full discretization by SPFEM.  In section \ref{section structure-preserving properties}, we analyze the structure-preserving properties of the proposed scheme, i.e., area conservation and unconditional energy stability, and develop a comprehensive framework for energy stability. It starts from defining a minimal stabilizing function $k_0(\theta)$, then we obtain the main result through a local energy estimate under the assumption that $k_0(\theta)$ is well-defined. The existence of $k_0(\theta)$ is further demonstrated in section \ref{section: existence of k0}. In section \ref{section solid-state dewetting}, we extend the SPFEM for simulating solid-state dewetting of thin films under anisotropic surface diffusion and contact line migration. Extensive numerical results are provided in section \ref{section numerical results} to demonstrate the efficiency, accuracy and structure-preserving properties of the proposed SPFEM. Finally, we conclude in section \ref{section conclusions}.

\section{A conservative formulation}\label{section surface matrix}

In this section, we propose a novel formulation with stabilization for \eqref{eqn:surface diffusion} and derive a conservative form by introducing a stabilized surface energy matrix.

Applying the geometric identity $\kappa\boldsymbol{n}=-\partial_{ss}\boldsymbol{X}$ \cite{mantegazza2011lecture}, the anisotropic surface diffusion equations \eqref{eqn:surface diffusion}--\eqref{eqn:weighted curvature} can be reformulated into \begin{subequations}\label{eqn:surface diffusion reformulate}
    \begin{align}
        &\boldsymbol{n}\cdot\partial_t\boldsymbol{X}-\partial_{ss}\mu=0,\qquad 0<s<L(t),\qquad t>0,\label{eqn:surface diffusion reformluate a}\\
        &\mu\boldsymbol{n}+\left[\hat{\gamma}(\theta)+\hat{\gamma}^{\prime\prime}(\theta)\right]\partial_{ss}\boldsymbol{X}=\boldsymbol{0}.\label{eqn:surface diffusion reformluate b}
    \end{align}
\end{subequations} where $L(t)=\int_{\Gamma(t)}\,\mathrm{d}s$ is the length of $\Gamma(t)$. 


For a vector $\boldsymbol{v}=(v_1,v_2)^T\in\mathbb{R}^2$, we denote $\boldsymbol{v}^\perp\in\mathbb{R}^2$ as its perpendicular vector which is the clockwise rotation of $\boldsymbol{v}$ by $\frac{\pi}{2}$, i.e. \begin{equation}
    \boldsymbol{v}^\perp\coloneqq(v_2,-v_1)^T=-J\boldsymbol{v},\qquad \text{with}\quad J=\left(\begin{array}{cc}
        0 & -1 \\
        1 & 0
    \end{array}\right).
\end{equation} Then the tangent vector $\boldsymbol{\tau}\coloneqq\partial_s\boldsymbol{X}$, and unit normal vector $\boldsymbol{n}$ can be written as $\boldsymbol{n}=-\boldsymbol{\tau}^\perp$. And the tangent vector can also be given by $\boldsymbol{\tau}=\boldsymbol{n}^{\perp}$. 




\begin{thm}
    For the weighted curvature $\mu$ given in \eqref{eqn:weighted curvature}, the following identity holds: \begin{equation}\label{eqn:weighted curvature alternative expression}
    \mu\boldsymbol{n}=-\partial_s\left(\hat{\boldsymbol{G}}_k(\theta)\partial_s\boldsymbol{X}\right),
\end{equation} with \begin{equation}\label{surface energy matrix}
    \hat{\boldsymbol{G}}_k(\theta)=\left(\begin{array}{cc}
       \hat{\gamma}(\theta)  & -\hat{\gamma}^\prime(\theta) \\
        \hat{\gamma}^\prime(\theta) & \hat{\gamma}(\theta)
    \end{array}\right)+k(\theta)\left(\begin{array}{cc}
     \sin^2\theta    &  -\cos\theta\sin\theta  \\
      -\cos\theta\sin\theta   &  \cos^2\theta
    \end{array}\right),\qquad \forall\theta\in2\pi\mathbb{T},
\end{equation} $k(\theta)\colon 2\pi\mathbb{T}\to\mathbb{R}_{\geq 0}$ is a non-negative stabilizing function.
\end{thm}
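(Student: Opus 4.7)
The plan is a direct computation that shows the stabilizing term contributes nothing to $\partial_s\!\bigl(\hat{\boldsymbol{G}}_k(\theta)\partial_s\boldsymbol{X}\bigr)$, while the non-stabilizing block reproduces the classical weighted-curvature identity. The first move is to recognize the second matrix in \eqref{surface energy matrix} as the rank-one projector $\boldsymbol{n}\boldsymbol{n}^{T}$: since $\boldsymbol{n}=(-\sin\theta,\cos\theta)^T$, a direct entry-wise check gives $\boldsymbol{n}\boldsymbol{n}^{T}=\bigl(\begin{smallmatrix}\sin^{2}\theta & -\cos\theta\sin\theta\\ -\cos\theta\sin\theta & \cos^{2}\theta\end{smallmatrix}\bigr)$. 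Because $\partial_s\boldsymbol{X}=\boldsymbol{\tau}$ and $\boldsymbol{\tau}\cdot\boldsymbol{n}=0$, the stabilizing contribution $k(\theta)\boldsymbol{n}\boldsymbol{n}^{T}\boldsymbol{\tau}=k(\theta)(\boldsymbol{n}\cdot\boldsymbol{\tau})\boldsymbol{n}$ vanishes identically, so the stabilizing function $k(\theta)$ drops out of the left-hand side of the identity before differentiation. This is the conceptual content that makes the theorem work for \emph{any} non-negative $k(\theta)$.

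Next I would evaluate the remaining product. Writing out
\begin{equation*}
\begin{pmatrix}\hat{\gamma}(\theta) & -\hat{\gamma}^{\prime}(\theta)\\ \hat{\gamma}^{\prime}(\theta) & \hat{\gamma}(\theta)\end{pmatrix}\begin{pmatrix}\cos\theta\\ \sin\theta\end{pmatrix}=\hat{\gamma}(\theta)\begin{pmatrix}\cos\theta\\ \sin\theta\end{pmatrix}+\hat{\gamma}^{\prime}(\theta)\begin{pmatrix}-\sin\theta\\ \cos\theta\end{pmatrix},
\end{equation*}
I would recognize the two column vectors on the right as $\boldsymbol{\tau}$ and $\boldsymbol{n}$, so that $\hat{\boldsymbol{G}}_k(\theta)\partial_s\boldsymbol{X}=\hat{\gamma}(\theta)\boldsymbol{\tau}+\hat{\gamma}^{\prime}(\theta)\boldsymbol{n}$. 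This is the clean intermediate expression that makes the final differentiation short.

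The third step is to apply the Frenet-type relations implied by the sign convention $\kappa=-(\partial_{ss}\boldsymbol{X})\cdot\boldsymbol{n}$ of the paper. From $\partial_s\boldsymbol{\tau}=(\partial_s\theta)(-\sin\theta,\cos\theta)^T=(\partial_s\theta)\boldsymbol{n}$, combined with the definition of $\kappa$, I get $\partial_s\theta=-\kappa$, and hence $\partial_s\boldsymbol{\tau}=-\kappa\boldsymbol{n}$ and $\partial_s\boldsymbol{n}=(\partial_s\theta)(-\boldsymbol{\tau})=\kappa\boldsymbol{\tau}$. Differentiating the intermediate expression with the product rule then yields
\begin{equation*}
\partial_s\bigl(\hat{\boldsymbol{G}}_k(\theta)\partial_s\boldsymbol{X}\bigr)=\bigl[-\kappa\hat{\gamma}^{\prime}(\theta)+\kappa\hat{\gamma}^{\prime}(\theta)\bigr]\boldsymbol{\tau}-\kappa\bigl[\hat{\gamma}(\theta)+\hat{\gamma}^{\prime\prime}(\theta)\bigr]\boldsymbol{n},
\end{equation*}
whose tangential component cancels and whose normal component is $-\mu\boldsymbol{n}$ by the definition \eqref{eqn:weighted curvature}. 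This is exactly \eqref{eqn:weighted curvature alternative expression}.

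There is no genuine obstacle here; the whole theorem is a sign-bookkeeping verification. The only place where a mistake is easy to make is the sign of $\partial_s\theta$ (equivalently, the orientation convention for $\kappa$): one has to be careful to use the paper's convention $\kappa=-(\partial_{ss}\boldsymbol{X})\cdot\boldsymbol{n}$ consistently, since flipping it would reverse the cancellation of the $\boldsymbol{\tau}$-components and destroy the identity. Once that convention is fixed, both the elimination of $k(\theta)$ via $\boldsymbol{n}\cdot\boldsymbol{\tau}=0$ and the recombination $\hat{\gamma}+\hat{\gamma}^{\prime\prime}$ in the normal direction are automatic.
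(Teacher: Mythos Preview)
Your proof is correct and follows essentially the same route as the paper: both recognize the stabilizing block as $\boldsymbol{n}\boldsymbol{n}^{T}$ and kill it via $\boldsymbol{n}\cdot\boldsymbol{\tau}=0$, both derive $\partial_s\theta=-\kappa$, and both differentiate the remaining expression to recover $-\mu\boldsymbol{n}$. The only cosmetic difference is that the paper works with $\partial_s\boldsymbol{X}$ and $\partial_s\boldsymbol{X}^{\perp}$ and re-expresses derivatives through $\partial_{ss}\boldsymbol{X}$, whereas you pass directly to the Frenet frame $(\boldsymbol{\tau},\boldsymbol{n})$ and use $\partial_s\boldsymbol{\tau}=-\kappa\boldsymbol{n}$, $\partial_s\boldsymbol{n}=\kappa\boldsymbol{\tau}$; your organization is slightly more streamlined but the argument is the same.
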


\begin{proof}
    Noticing \begin{equation}\label{normal vector theta formulation}
        \boldsymbol{n}=-\partial_s\boldsymbol{X}^\perp=(-\partial_sy,\partial_sx)^T,\qquad\partial_sx=\cos\theta,\qquad\partial_sy=\sin\theta,
    \end{equation} therefore \begin{equation}
        \partial_{ss}x=-\sin\theta\partial_s\theta,\qquad\partial_{ss}y=\cos\theta\partial_s\theta,
    \end{equation} which implies that \begin{equation}\label{der-kappa}
        \kappa=-(\partial_{ss}\boldsymbol{X})\cdot\boldsymbol{n}=\partial_{ss}x\partial_sy-\partial_{ss}y\partial_sx=-(\sin^2\theta+\cos^2\theta)\partial_s\theta=-\partial_s\theta.
    \end{equation} By the geometric identity $\kappa\boldsymbol{n}=-\partial_{ss}\boldsymbol{X}$ and $\partial_s\boldsymbol{X}=\tau=\boldsymbol{n}^\perp$, we obtain \begin{equation}\label{eqn:kappa*tau}
        \kappa\partial_s\boldsymbol{X}=\kappa\boldsymbol{n}^\perp=-\partial_{ss}\boldsymbol{X}^{\perp},\qquad\kappa\partial_s\boldsymbol{X}^\perp=-\kappa\boldsymbol{n}=\partial_{ss}\boldsymbol{X}.
    \end{equation} Then by \eqref{der-kappa} and \eqref{eqn:kappa*tau}, \begin{equation}\label{der gamma derX}
        \begin{aligned}
            \partial_s\Bigl(\hat{\gamma}(\theta)\partial_s\boldsymbol{X}\Bigr)&=\hat{\gamma}^\prime(\theta)\partial_s\theta\partial_s\boldsymbol{X}+\hat{\gamma}(\theta)\partial_{ss}\boldsymbol{X}\\
            &=-\kappa\hat{\gamma}^\prime(\theta)\partial_s\boldsymbol{X}+\hat{\gamma}(\theta)\partial_{ss}\boldsymbol{X}\\
            &=\hat{\gamma}^{\prime\prime}(\theta)\partial_{ss}\boldsymbol{X}^\perp+\hat{\gamma}(\theta)\partial_{ss}\boldsymbol{X},
        \end{aligned}
    \end{equation} and \begin{equation}\label{der dergamma derXperp}
        \begin{aligned}
            \partial_s\Bigl(\hat{\gamma}^\prime(\theta)\partial_s\boldsymbol{X}^\perp\Bigr)&=\hat{\gamma}^{\prime\prime}(\theta)\partial_s\theta\partial_s\boldsymbol{X}^\perp+\hat{\gamma}^\prime(\theta)\partial_{ss}\boldsymbol{X}^\perp\\
            &=-\kappa\hat{\gamma}^{\prime\prime}(\theta)\partial_s\boldsymbol{X}^\perp+\hat{\gamma}^\prime(\theta)\partial_{ss}\boldsymbol{X}^\perp\\
            &=-\hat{\gamma}^{\prime\prime}(\theta)\partial_{ss}\boldsymbol{X}+\hat{\gamma}^\prime(\theta)\partial_{ss}\boldsymbol{X}^\perp.
        \end{aligned}
    \end{equation}   
    Note that $\boldsymbol{n}^T\partial_s\boldsymbol{X}=\boldsymbol{n}\cdot\boldsymbol{\tau}\equiv0$, thus $\partial_s\left(k(\theta)\boldsymbol{n}\boldsymbol{n}^T\partial_s\boldsymbol{X}\right)$ vanishes. Combining \eqref{eqn:surface diffusion reformluate b}, \eqref{der gamma derX} and \eqref{der dergamma derXperp}, we have \begin{equation}\label{mu*normal}
    \begin{aligned}
        \mu\boldsymbol{n}&=-[\hat{\gamma}(\theta)+\hat{\gamma}^{\prime\prime}(\theta)]\partial_{ss}\boldsymbol{X}-\partial_s\left(k(\theta)\boldsymbol{n}\boldsymbol{n}^T\partial_s\boldsymbol{X}\right)\\
        &=-\partial_s\Bigl(\hat{\gamma}(\theta)\partial_s\boldsymbol{X}\Bigr)+\partial_s\Bigl(\hat{\gamma}^\prime(\theta)\partial_s\boldsymbol{X}^\perp\Bigr)-\partial_s\left(k(\theta)\boldsymbol{n}\boldsymbol{n}^T\partial_s\boldsymbol{X}\right)\\
        &=-\partial_s\Bigl(\hat{\gamma}(\theta)\partial_s\boldsymbol{X}-\hat{\gamma}^\prime(\theta)\partial_s\boldsymbol{X}^\perp+k(\theta)\boldsymbol{n}\boldsymbol{n}^T\partial_s\boldsymbol{X}\Bigr).
    \end{aligned}
\end{equation} 
On the other hand, by \eqref{normal vector theta formulation}, we have \begin{equation}
    \left(\begin{array}{cc}
     \sin^2\theta  & -\sin\theta\cos\theta \\
       -\sin\theta\cos\theta  &  \cos^2\theta
    \end{array}\right)=\left(\begin{array}{c}
         -\sin\theta  \\
         \cos\theta
    \end{array}\right)\left(-\sin\theta,\cos\theta\right)=\boldsymbol{n}\boldsymbol{n}^T,
\end{equation} consequently \begin{equation}\label{Gktau}
    \begin{aligned}
        \hat{\boldsymbol{G}}_k(\theta)\partial_s\boldsymbol{X}&=\left[\left(\begin{array}{cc}
            \hat{\gamma}(\theta) & -\hat{\gamma}^\prime(\theta) \\
            \hat{\gamma}^\prime(\theta) & \hat{\gamma}(\theta)
        \end{array}\right)+k(\theta)    \left(\begin{array}{cc}
     \sin^2\theta  & -\sin\theta\cos\theta \\
       -\sin\theta\cos\theta  &  \cos^2\theta
    \end{array}\right)\right]\partial_s\boldsymbol{X}\\
        &=\left[\hat{\gamma}(\theta)I_2+\hat{\gamma}^\prime(\theta)J+k(\theta)\boldsymbol{n}\boldsymbol{n}^T\right]\partial_s\boldsymbol{X}\\
        &=\hat{\gamma}(\theta)\partial_s\boldsymbol{X}-\hat{\gamma}^\prime(\theta)\partial_s\boldsymbol{X}^\perp+k(\theta)\boldsymbol{n}\boldsymbol{n}^T\partial_s\boldsymbol{X},
    \end{aligned}
\end{equation} where $I_2$ is the $2\times 2$ identity matrix. Substituting \eqref{Gktau} into \eqref{mu*normal}, the desired equality \eqref{eqn:weighted curvature alternative expression} is obtained. 
\end{proof}

Applying \eqref{eqn:weighted curvature alternative expression}, the governing geometric PDE \eqref{eqn:surface diffusion reformulate} for anisotropic surface diffusion can be reformulated as the following \textit{conservative form} \begin{subequations}\label{eqn:surface diffusion reformulate conservative form}
    \begin{align}
        &\boldsymbol{n}\cdot\partial_t\boldsymbol{X}-\partial_{ss}\mu=0,\label{eqn:surface diffusion reformulate conservative form a}\\
        &\mu\boldsymbol{n}+\partial_s\left(\hat{\boldsymbol{G}}_k(\theta)\partial_s\boldsymbol{X}\right)=0.\label{eqn:surface diffusion reformulate conservative form b}
    \end{align}
\end{subequations}

\begin{remark}
    If we take the stabilizing term $k(\theta)\equiv 0$ in \eqref{surface energy matrix}, then $\hat{\boldsymbol{G}}_k(\theta)=\left(\begin{array}{cc}
       \hat{\gamma}(\theta)  & -\hat{\gamma}^\prime(\theta) \\
        \hat{\gamma}^\prime(\theta) & \hat{\gamma}(\theta)
    \end{array}\right)$ collapses to the surface energy matrix $G(\theta)$ proposed in \cite{li2021energy}. Moreover, with the adoption of the $\gamma(\boldsymbol{n})$ formulation, we can define the corresponding stabilizing function $k(\boldsymbol{n})\coloneqq k(\boldsymbol{n}(\theta))=k(\theta)$ by the one-to-one correspondence $\boldsymbol{n}=\boldsymbol{n}(\theta)=(-\sin\theta, \cos\theta)^T$, and the stabilizing term is simplified to $k(\boldsymbol{n})\boldsymbol{n}\boldsymbol{n}^T$. Consequently, $\hat{\boldsymbol{G}}_k(\theta)$ is transformed into the surface energy matrix $\boldsymbol{G}_k(\boldsymbol{n})$ in \cite{bao2024structure}.
\end{remark}
\begin{remark}
    At the continuous level, $k(\theta)$ makes no contribution, as $\boldsymbol{n}^T\partial_s\boldsymbol{X} = 0$. Thus, the conservative form \eqref{eqn:surface diffusion reformulate conservative form} and the original form \eqref{eqn:surface diffusion reformulate} are equivalent. At the discrete level, however, $k(\theta)$ serves as a stabilizing term, which relaxes the energy stability conditions for the anisotropy $\hat{\gamma}(\theta)$. For example, surface matrix $G(\theta)$ in \cite{li2021energy} (absent the stabilizing term) only guarantees energy stability for specific cases of weakly anisotropic surface energy. In contrast, with this stabilizing term, this formulation can be applied to more general anisotropies, see \eqref{es condition on gamma}
\end{remark}

\section{A SPFEM for anisotropic surface diffusion}\label{section sp-pfem}

In this section, we first develop a novel weak formulation based on the conservative form \eqref{eqn:surface diffusion reformulate conservative form} and present the spatial semi-discretization of this weak formulation. After that, a structure-preserving SPFEM is proposed by adapting the implicit-explicit Euler method in time, which preserves area conservation and energy dissipation at the discrete level.

\subsection{Weak formulation}

In order to derive a weak formulation of equation \eqref{eqn:surface diffusion reformulate conservative form}, we introduce a time-independent variable $\rho$ which parameterizes $\Gamma(t)$ over a fixed domain $\rho\in\mathbb{I}=[0,1]$ as \begin{equation}
    \Gamma(t)\coloneqq\boldsymbol{X}(\rho,t)=(x(\rho,t),y(\rho,t))^T\colon\mathbb{I}\times\mathbb{R}^+\to\mathbb{R}^2.
\end{equation} The arc-length parameter $s$ can thus be computed by $s=\int_0^\rho|\partial_\rho\boldsymbol{X}(q,t)|\,\mathrm{d}q$. (We do not distinguish $\boldsymbol{X}(\rho,t)$ and $\boldsymbol{X}(s,t)$ if there's no misunderstanding.) 

Introduce the following functional space with respect to the evolution curve $\Gamma(t)$ as \begin{equation}
    L^2(\mathbb{I})\coloneqq\left\{u\colon\mathbb{I}\to\mathbb{R}\mid\int_{\Gamma(t)}|u(s)|^2\,\mathrm{d}s=\int_{\mathbb{I}}|u(s(\rho,t))|^2\partial_\rho s\,\mathrm{d}s<+\infty\right\},
\end{equation} equipped with the $L^2$-inner product \begin{equation}
    \Bigl(u,v\Bigr)_{\Gamma(t)}\coloneqq\int_{\Gamma(t)}u(s)v(s)\,\mathrm{d}s=\int_{\Gamma(t)}u(s(\rho,t))v(s(\rho,t))\partial_\rho s\,\mathrm{d}\rho,
\end{equation} for any scalar (or vector) functions. The Sobolev spaces are defined as \begin{subequations}
    \begin{align}
        H^1(\mathbb{I})&\coloneqq\left\{u\colon\mathbb{I}\to\mathbb{R}\mid u\in L^2(\mathbb{I}),\,\,\text{and}\,\,\partial_\rho u\in L^2(\mathbb{I})\right\},\\
        H^1_p(\mathbb{I})&\coloneqq\left\{u\in H^1(\mathbb{I})\mid u(0)=u(1)\right\}.
    \end{align}
\end{subequations} Extensions of above definitions to the functions in $[L^2(\mathbb{I})]^2,[H^1(\mathbb{I})]^2$ and $[H^1_p(\mathbb{I})]^2$ are straightforward.

By multiplying the equation \eqref{eqn:surface diffusion reformulate conservative form a} by a test function $\varphi\in H^1_p(\mathbb{I})$, integrating over $\Gamma(t)$, and applying integration by parts, we obtain
    \begin{equation}\label{variation1, sd}
    \Bigl(\boldsymbol{n}\cdot \partial_t\boldsymbol{X}, \varphi\Bigr)_{\Gamma(t)}+\Bigl(\partial_s\mu, \partial_s\varphi\Bigr)_{\Gamma(t)}=0.
\end{equation}

Similarly, by taking the dot product of equation \eqref{eqn:surface diffusion reformulate conservative form b} with a test function $\boldsymbol{\omega}=(\omega_1,\omega_2)^T\in[H^1_p(\mathbb{I})]^2$ and integrating by parts, we have \begin{equation}\label{variation2, sd}
    \begin{aligned}
        0&=\Bigl(\mu\boldsymbol{n}+\partial_s\left(\hat{\boldsymbol{G}}_k(\theta)\partial_s\boldsymbol{X}\right),\boldsymbol{\omega}\Bigr)_{\Gamma(t)}\\
        &=\Bigl(\mu\boldsymbol{n},\boldsymbol{\omega}\Bigr)_{\Gamma(t)}+\Bigl(\partial_s(\hat{\boldsymbol{G}}_k(\theta)\partial_s\boldsymbol{X}),\boldsymbol{\omega}\Bigr)_{\Gamma(t)}\\
        &=\Bigl(\mu\boldsymbol{n},\boldsymbol{\omega}\Bigr)_{\Gamma(t)}-\Bigl(\hat{\boldsymbol{G}}_k(\theta)\partial_s\boldsymbol{X},\partial_s\boldsymbol{\omega}\Bigr)_{\Gamma(t)}
    \end{aligned}
\end{equation}

Combining \eqref{variation1, sd} and \eqref{variation2, sd}, we propose a new weak formulation for \eqref{eqn:surface diffusion reformulate conservative form} as follows: Given an initial closed curve $\Gamma(0)\coloneqq\boldsymbol{X}(\cdot,0)=\boldsymbol{X}_0\in[H^1_p(\mathbb{I})]^2$, find the solution $(\boldsymbol{X}(\cdot,t)=(x(\cdot,t),y(\cdot,t))^T,\mu(\cdot,t))\in[H^1_p(\mathbb{I})]^2\times H^1_p(\mathbb{I})$, such that: \begin{subequations}\label{new variational formulation surface diffusion}
    \begin{align}
        \begin{split}
            &\Bigl(\boldsymbol{n}\cdot \partial_t\boldsymbol{X}, \varphi\Bigr)_{\Gamma(t)}+\Bigl(\partial_s\mu, \partial_s\varphi\Bigr)_{\Gamma(t)}=0,\qquad \forall\varphi\in H^1_p(\mathbb{I}),\label{new variational formulation surface diffusion a}
        \end{split}\\
        \begin{split}
            &\Bigl(\mu\boldsymbol{n},\boldsymbol{\omega}\Bigr)_{\Gamma(t)}-\Bigl(\hat{\boldsymbol{G}}_k(\theta)\partial_s\boldsymbol{X},\partial_s\boldsymbol{\omega}\Bigr)_{\Gamma(t)}=0,\qquad\forall\boldsymbol{\omega}\in[H^1_p(\mathbb{I})]^2.\label{new variational formulation surface diffusion b}
        \end{split}
    \end{align}
\end{subequations}

It can be demonstrated that the weak formulation \eqref{new variational formulation surface diffusion} maintains two geometric properties, namely, area conservation and energy dissipation.  

\begin{prop}[area conservation and energy dissipation]\label{prop weak form structure preserving} Suppose $\Gamma(t)$ is given by the solution $(\boldsymbol{X}(\cdot, t), \mu(\cdot, t))$ of the weak formulation \eqref{new variational formulation surface diffusion}, denote $A_c(t)$ as the enclosed area and $W_c(t)$ as the total energy of the closed evolving curve $\Gamma(t)$, respectively, which are formally given by \begin{equation}
    A_c(t)\coloneqq\int_{\Gamma(t)}y(s,t)\partial_s x(s,t)\,\mathrm{d}s,\qquad W_c(t)\coloneqq\int_{\Gamma(t)}\hat{\gamma}(\theta)\,\mathrm{d}s.
\end{equation} Then we have \begin{equation}\label{geo properties, cont, sd}
    A_c(t)\equiv A_c(0),\qquad W_c(t)\leq W_c(t_1)\leq W_c(0), \qquad \forall t\geq t_1\geq 0.
\end{equation} More precisely, \begin{equation}\label{eqn:area conservation & energy disspation of surface diffusion}
    \frac{\mathrm{d}}{\mathrm{d}t}A_c(t)=0,\qquad \frac{\mathrm{d}}{\mathrm{d}t}W_c(t)=-\int_{\Gamma(t)}|\partial_s\mu|^2\,\mathrm{d}s\leq 0,\qquad t\geq 0.
\end{equation}
\end{prop}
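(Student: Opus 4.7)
The plan splits into two independent arguments. For area conservation, I would first rewrite $A_c(t)$ as a parametric integral over the fixed domain, $A_c(t) = \int_{\mathbb{I}} y\, \partial_\rho x \, d\rho$, differentiate in $t$, and integrate by parts in $\rho$ exploiting periodicity to obtain $\int_{\mathbb{I}} [(\partial_t y)(\partial_\rho x) - (\partial_t x)(\partial_\rho y)]\, d\rho$. Substituting the identities $\partial_\rho x = \cos\theta\, |\partial_\rho \boldsymbol{X}|$ and $\partial_\rho y = \sin\theta\, |\partial_\rho \boldsymbol{X}|$ (cf.~\eqref{normal vector theta formulation}) and recognising the components of $\boldsymbol{n}$, the integrand collapses to $\boldsymbol{n} \cdot \partial_t \boldsymbol{X}$ and the measure returns to arc-length, giving $\frac{d}{dt} A_c(t) = \int_{\Gamma(t)} \boldsymbol{n} \cdot \partial_t \boldsymbol{X}\, ds$. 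Choosing the constant test function $\varphi \equiv 1$ in \eqref{new variational formulation surface diffusion a} kills the second term there and yields $\int_{\Gamma(t)} \boldsymbol{n}\cdot\partial_t\boldsymbol{X}\, ds = 0$, i.e.\ the area conservation.

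For energy dissipation, the core idea is the compatible test-function pair $(\varphi, \boldsymbol{\omega}) = (\mu, \partial_t \boldsymbol{X})$ in \eqref{new variational formulation surface diffusion}. Putting $\varphi = \mu$ into \eqref{new variational formulation surface diffusion a} gives $(\boldsymbol{n} \cdot \partial_t \boldsymbol{X}, \mu)_{\Gamma(t)} = -\int_{\Gamma(t)} |\partial_s \mu|^2\, ds$, while putting $\boldsymbol{\omega} = \partial_t \boldsymbol{X}$ into \eqref{new variational formulation surface diffusion b} gives $(\mu\boldsymbol{n}, \partial_t\boldsymbol{X})_{\Gamma(t)} = (\hat{\boldsymbol{G}}_k(\theta)\partial_s \boldsymbol{X}, \partial_s \partial_t \boldsymbol{X})_{\Gamma(t)}$. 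Since $(\mu\boldsymbol{n}, \partial_t \boldsymbol{X})_{\Gamma(t)} = (\boldsymbol{n}\cdot \partial_t \boldsymbol{X}, \mu)_{\Gamma(t)}$, equating the two produces the key identity
\[
\Bigl(\hat{\boldsymbol{G}}_k(\theta)\partial_s \boldsymbol{X},\; \partial_s \partial_t \boldsymbol{X}\Bigr)_{\Gamma(t)} = -\int_{\Gamma(t)} |\partial_s \mu|^2\, ds.
\]
It then remains to identify the left-hand side with $\frac{d}{dt} W_c(t)$, which is the main obstacle.

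To do so, I would differentiate $W_c(t) = \int_{\mathbb{I}} \hat{\gamma}(\theta)\, |\partial_\rho \boldsymbol{X}| \, d\rho$ under the integral. The arc-length factor supplies $\hat{\gamma}(\theta)\, \boldsymbol{\tau} \cdot \partial_\rho \partial_t \boldsymbol{X}$ via $\partial_t|\partial_\rho \boldsymbol{X}| = \boldsymbol{\tau}\cdot\partial_\rho\partial_t\boldsymbol{X}$, while the angular factor requires $\partial_t \theta$ in terms of $\partial_s \partial_t \boldsymbol{X}$. The cleanest route is to differentiate $\boldsymbol{\tau} = \partial_\rho \boldsymbol{X}/|\partial_\rho \boldsymbol{X}|$ in $t$ and dot with $\boldsymbol{n}$, yielding the handy identity $\partial_t \theta = \boldsymbol{n} \cdot \partial_s \partial_t \boldsymbol{X}$. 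Converting $d\rho$ back to $ds$ then assembles both contributions into
\[
\frac{d}{dt} W_c(t) = \int_{\Gamma(t)} \bigl[\hat{\gamma}(\theta)\boldsymbol{\tau} + \hat{\gamma}^\prime(\theta)\boldsymbol{n}\bigr] \cdot \partial_s \partial_t \boldsymbol{X}\, ds.
\]
Finally, the explicit expansion \eqref{Gktau}, combined with $\partial_s \boldsymbol{X}^\perp = \boldsymbol{\tau}^\perp = -\boldsymbol{n}$ and the continuous-level orthogonality $\boldsymbol{n}^T \partial_s \boldsymbol{X} = 0$ (which annihilates the stabilising $k(\theta)$ contribution), delivers $\hat{\boldsymbol{G}}_k(\theta)\partial_s \boldsymbol{X} = \hat{\gamma}(\theta)\boldsymbol{\tau} + \hat{\gamma}^\prime(\theta)\boldsymbol{n}$, matching the integrand above and closing the argument. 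I expect the derivation of $\partial_t \theta = \boldsymbol{n}\cdot\partial_s \partial_t \boldsymbol{X}$ to be the most delicate point, since one must carefully handle how $\partial_s$ itself depends on $t$ through the moving arc-length parametrisation.
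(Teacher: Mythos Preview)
Your proposal is correct and follows essentially the same approach as the paper: both arguments reduce area conservation to testing \eqref{new variational formulation surface diffusion a} with $\varphi\equiv 1$, and both establish energy dissipation by computing $\frac{d}{dt}W_c(t)=\int_{\Gamma(t)}\hat{\boldsymbol{G}}_k(\theta)\partial_s\boldsymbol{X}\cdot\partial_s\partial_t\boldsymbol{X}\,ds$ via the identity $\partial_t\theta=\boldsymbol{n}\cdot\partial_s\partial_t\boldsymbol{X}$ and then testing with $(\varphi,\boldsymbol{\omega})=(\mu,\partial_t\boldsymbol{X})$. The only difference is packaging---the paper cites the Reynolds transport theorem for the area step and isolates the arc-length differentiation as a separate transport lemma, whereas you carry out both computations directly---but the underlying calculations are identical.
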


To prove the above theorem, we first introduce the following transport lemma:

\begin{lem}\label{lemma transport theorem}
Suppose $\Gamma(t)$ is a two-dimensional piecewise $C^1$ curve parameterized by $\boldsymbol{X}(\rho,t)$, $f\colon\Gamma(t)\times\mathbb{R}^+\to\mathbb{R}$ is a differentiable function, then 
\begin{equation}
        \frac{\mathrm{d}}{\mathrm{d}t}\int_{\Gamma(t)}f\,\mathrm{d}s=\int_{\Gamma(t)}\partial_tf+f\partial_s(\partial_t\boldsymbol{X})\cdot\partial_s\boldsymbol{X}\,\mathrm{d}s.
    \end{equation}
\end{lem}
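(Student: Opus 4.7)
The plan is to pull the moving-domain integral back to the fixed reference interval $\mathbb{I}=[0,1]$ where $\rho$ lives, differentiate under the integral sign on that fixed domain, and then re-express the result in arc-length form. Using $\mathrm{d}s=|\partial_\rho\boldsymbol{X}|\,\mathrm{d}\rho$, the first step would rewrite
\begin{equation*}
\int_{\Gamma(t)}f\,\mathrm{d}s=\int_{\mathbb{I}}f(\boldsymbol{X}(\rho,t),t)\,|\partial_\rho\boldsymbol{X}(\rho,t)|\,\mathrm{d}\rho.
\end{equation*}
Because the domain $\mathbb{I}$ is now independent of $t$, I would differentiate under the integral to obtain
\begin{equation*}
\frac{\mathrm{d}}{\mathrm{d}t}\int_{\Gamma(t)}f\,\mathrm{d}s=\int_{\mathbb{I}}(\partial_tf)\,|\partial_\rho\boldsymbol{X}|+f\,\partial_t|\partial_\rho\boldsymbol{X}|\,\mathrm{d}\rho,
\end{equation*}
where $\partial_t f$ is understood as the material derivative, i.e.\ the $t$-derivative taken at fixed $\rho$ after pulling $f$ back along $\boldsymbol{X}(\rho,t)$.

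The only nontrivial piece is the simplification of $\partial_t|\partial_\rho\boldsymbol{X}|$. By the chain rule and the commutativity $\partial_t\partial_\rho=\partial_\rho\partial_t$,
\begin{equation*}
\partial_t|\partial_\rho\boldsymbol{X}|=\frac{\partial_\rho\boldsymbol{X}\cdot\partial_\rho(\partial_t\boldsymbol{X})}{|\partial_\rho\boldsymbol{X}|}=|\partial_\rho\boldsymbol{X}|\,\partial_s\boldsymbol{X}\cdot\partial_s(\partial_t\boldsymbol{X}),
\end{equation*}
where in the last step I would use $\partial_s=|\partial_\rho\boldsymbol{X}|^{-1}\partial_\rho$ twice, once to identify $\partial_s\boldsymbol{X}=\partial_\rho\boldsymbol{X}/|\partial_\rho\boldsymbol{X}|$ and once to rewrite $\partial_\rho(\partial_t\boldsymbol{X})=|\partial_\rho\boldsymbol{X}|\,\partial_s(\partial_t\boldsymbol{X})$. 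Substituting this back and converting the $\rho$-integral into an arc-length integral via $|\partial_\rho\boldsymbol{X}|\,\mathrm{d}\rho=\mathrm{d}s$ yields the claimed identity.

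The one small obstacle is the piecewise $C^1$ regularity of $\boldsymbol{X}$. I would handle this by partitioning $\mathbb{I}$ into finitely many subintervals on which $\boldsymbol{X}(\cdot,t)$ is genuinely $C^1$; since the argument above involves only differentiation under the integral and no integration by parts, no boundary contributions arise at the break points, and summing the per-subinterval identities reproduces the full statement. This makes the lemma essentially a direct chain-rule computation, which is why it can be invoked freely in the subsequent proof of Proposition~\ref{prop weak form structure preserving}.
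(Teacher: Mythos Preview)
Your proposal is correct and follows essentially the same approach as the paper: pull back to the fixed interval, differentiate under the integral, compute $\partial_t|\partial_\rho\boldsymbol{X}|=|\partial_\rho\boldsymbol{X}|\,\partial_s\boldsymbol{X}\cdot\partial_s(\partial_t\boldsymbol{X})$ via the chain rule, and convert back to arc length. Your added remark on handling the piecewise $C^1$ regularity by partitioning into subintervals is a reasonable clarification that the paper's proof leaves implicit.
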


\begin{proof}
Since $|\partial_\rho\boldsymbol{X}|=\sqrt{(\partial_\rho x)^2+(\partial_\rho y)^2}$, then \begin{equation}\label{eq: dtdrho}
    \begin{aligned}
        \partial_t|\partial_\rho\boldsymbol{X}|&=\frac{\partial_\rho x\partial_t(\partial_\rho x)+\partial_\rho y\partial_t(\partial_\rho y)}{\sqrt{(\partial_\rho x)^2+(\partial_\rho y)^2}}\\
        &=\frac{\partial_\rho\boldsymbol{X}}{|\partial_\rho\boldsymbol{X}|}\cdot\frac{\partial_\rho(\partial_t\boldsymbol{X})}{|\partial_\rho\boldsymbol{X}|}|\partial_\rho\boldsymbol{X}|\\
        &=\partial_s\boldsymbol{X}\cdot\partial_s(\partial_t\boldsymbol{X})|\partial_\rho\boldsymbol{X}|,
    \end{aligned}
\end{equation} thus 
    \begin{equation}
        \begin{aligned}
            \frac{\mathrm{d}}{\mathrm{d}t}\int_{\Gamma(t)}f\,\mathrm{d}s&=\frac{\mathrm{d}}{\mathrm{d}t}\int_0^1f|\partial_\rho\boldsymbol{X}|\,\mathrm{d}\rho\\
            &=\int_{0}^1\partial_tf\,|\partial_\rho\boldsymbol{X}|+f\partial_t|\partial_\rho\boldsymbol{X}|\,\mathrm{d}\rho\\
            &=\int_0^1\partial_tf\,|\partial_\rho\boldsymbol{X}|+f\partial_s(\partial_t\boldsymbol{X})\cdot\partial_s\boldsymbol{X}|\partial_\rho\boldsymbol{X}|\,\mathrm{d}\rho\\
            &=\int_{\Gamma(t)}\partial_tf+f\partial_s(\partial_t\boldsymbol{X})\cdot\partial_s\boldsymbol{X}\,\mathrm{d}s.
        \end{aligned}
    \end{equation}
\end{proof}

Now the proof of Proposition \ref{prop weak form structure preserving} is ready to be presented: \begin{proof}
    Denote the region enclosed by $\Gamma(t)$ as $\Omega(t)$. For the area conservation, by the Reynolds' transport theorem \cite{reynolds1983papers} and taking $\varphi\equiv 1$ in \eqref{new variational formulation surface diffusion a}, \begin{equation}
        \begin{aligned}
            \frac{\mathrm{d}}{\mathrm{d}t}A_c(t)&=\frac{\mathrm{d}}{\mathrm{d}t}\int_{\Omega(t)}1\,\mathrm{d}x\mathrm{d}y=\int_{\Gamma(t)}\boldsymbol{n}\cdot\partial_t\boldsymbol{X}\,\mathrm{d}s\\
            &=\Bigl(\boldsymbol{n}\cdot\partial_t\boldsymbol{X},1\Bigr)_{\Gamma(t)}=-\Bigl(\partial_s\mu,\partial_s1\Bigr)_{\Gamma(t)}=0.\\
        \end{aligned}
    \end{equation} 
    For the energy dissipation part, by Lemma~\ref{lemma transport theorem}, we have \begin{equation}
    \begin{aligned}
        \frac{\mathrm{d}}{\mathrm{d}t}W_c(t)&=\int_{\Gamma(t)}\partial_t\hat{\gamma}(\theta)+\hat{\gamma}(\theta)\partial_s(\partial_t\boldsymbol{X})\cdot\partial_s\boldsymbol{X}\,\mathrm{d}s\\
        &=\int_{\Gamma(t)}\hat{\gamma}^\prime(\theta)\partial_t\theta+\hat{\gamma}(\theta)\partial_s(\partial_t\boldsymbol{X})\cdot\partial_s\boldsymbol{X}\,\mathrm{d}s.
    \end{aligned}
    \end{equation}
    On the other hand, by using \eqref{eq: dtdrho}, we can simplify $\partial_s (\partial_t \boldsymbol{X})$ as
    \begin{align}
        \partial_s (\partial_t \boldsymbol{X})&= \frac{1}{|\partial_\rho \boldsymbol{X}|} \partial_\rho (\partial_t \boldsymbol{X})\nonumber\\
        &= \frac{1}{|\partial_\rho \boldsymbol{X}|} \partial_t \left(|\partial_\rho \boldsymbol{X}|(\cos\theta, \sin\theta)^T\right)\\
        &= \frac{1}{|\partial_\rho \boldsymbol{X}|} \left(\partial_s \boldsymbol{X}\cdot \partial_s (\partial_t \boldsymbol{X})|\partial_\rho \boldsymbol{X}|(\cos\theta, \sin\theta)^T + |\partial_\rho \boldsymbol{X}|(-\sin\theta, \cos\theta)^T\partial_t \theta\right)\nonumber
    \end{align}
    This, together with the fact $\partial_s\boldsymbol{X}^\perp=(\sin\theta,-\cos\theta)^T$, yields that \begin{equation}
        \partial_t\theta=-\partial_s(\partial_t\boldsymbol{X})\cdot\partial_s\boldsymbol{X}^\perp.
    \end{equation} Therefore, \begin{equation}
        \begin{aligned}
            \frac{\mathrm{d}}{\mathrm{d}t}W_c(t)&=\int_{\Gamma(t)}\left[\hat{\gamma}(\theta)\partial_s\boldsymbol{X}-\hat{\gamma}^\prime(\theta)\partial_s\boldsymbol{X}^\perp\right]\cdot\partial_s(\partial_t\boldsymbol{X})\,\mathrm{d}s.
        \end{aligned}
    \end{equation} Since $\boldsymbol{n}^T\partial_s\boldsymbol{X}\equiv 0$, then \begin{equation}
    \begin{aligned}
        \hat{\gamma}(\theta)\partial_s\boldsymbol{X}-\hat{\gamma}^\prime(\theta)\partial_s\boldsymbol{X}^\perp&=\hat{\gamma}(\theta)\partial_s\boldsymbol{X}-\hat{\gamma}^\prime(\theta)\partial_s\boldsymbol{X}^\perp+k(\theta)\boldsymbol{n}\boldsymbol{n}^T\partial_s\boldsymbol{X}\\
        &=\hat{\boldsymbol{G}}_k(\theta)\partial_s\boldsymbol{X}
    \end{aligned}
    \end{equation} which leads to \begin{equation}
        \frac{\mathrm{d}}{\mathrm{d}t}W_c(t)=\int_{\Gamma(t)}\hat{\boldsymbol{G}}_k(\theta)\partial_s\boldsymbol{X}\cdot\partial_s(\partial_t\boldsymbol{X})\,\mathrm{d}s.
    \end{equation} 
    Therefore, by taking $\varphi=\mu$ and $\boldsymbol{\omega}=\partial_t\boldsymbol{X}$ in \eqref{new variational formulation surface diffusion a} and \eqref{new variational formulation surface diffusion b}, respectively, we have \begin{equation}
        \begin{aligned}
            \frac{\mathrm{d}}{\mathrm{d}t}W_c(t)&=\Bigl(\hat{\boldsymbol{G}}_k(\theta)\partial_s\boldsymbol{X},\partial_s(\partial_t\boldsymbol{X})\Bigr)_{\Gamma(t)}\\
            &=\Bigl(\mu\boldsymbol{n},\partial_t\boldsymbol{X}\Bigr)_{\Gamma(t)}=-\Bigl(\partial_s\mu,\partial_s\mu\Bigr)_{\Gamma(t)}\leq 0.
        \end{aligned}
    \end{equation}
\end{proof}

\subsection{A semi-discretization in space}

To obtain the spatial discretization, let $N>2$ be a positive integer and $h=1/N$ be the mesh size, grid points $\rho_j=jh$, sub-intervals $I_j=[\rho_{j-1},\rho_j]$ for $j=1,2,\dots,N$ and the uniform partition $\mathbb{I}=[0,1]=\cup_{j=1}^NI_j$. The closed
 curve $\Gamma(t)=\boldsymbol{X}(\cdot,t)$ is approximated by the  polygonal curve $\Gamma^h(t)=\boldsymbol{X}^h(\cdot,t)=\left(x^h(\cdot,t),y^h(\cdot,t)\right)^T$ satisfying $\boldsymbol{X}^h(\rho_j,0)=\boldsymbol{X}(\rho_j,0)$.

The polygon $\Gamma^h(t)$ is composed of ordered line segments $\{\boldsymbol{h}_j(t)\}_{j=1}^N$, i.e. \begin{equation}
    \Gamma^h(t)=\bigcup_{j=1}^N\boldsymbol{h}_j(t) \qquad\text{with}\quad\boldsymbol{h}_j(t)=(h_{j,x},h_{j,y})^T\coloneqq\boldsymbol{X}^h(\rho_j,t)-\boldsymbol{X}^h(\rho_{j-1}, t).
\end{equation} And we always assume that $\displaystyle h_{\min}(t)=\min_{1\leq j\leq N}|\boldsymbol{h}_j(t)|>0$ for all $t>0$. 

By using $\boldsymbol{h}_j$, the discrete geometric quantities such as the unit tangential vector $\boldsymbol{\tau}^h$, the outward unit normal vector $\boldsymbol{n}^h$ and the inclination angle $\theta^h$ can be computed on each segment as: \begin{equation}
    \boldsymbol{\tau}^h|_{I_j}=\frac{\boldsymbol{h}_j}{|\boldsymbol{h}_j|}\coloneqq\boldsymbol{\tau}_j^h,\qquad\boldsymbol{n}^h|_{I_i}=-(\boldsymbol{\tau}_j^h)^\perp=-\frac{(\boldsymbol{h}_j)^\perp}{|\boldsymbol{h}_j|}\coloneqq\boldsymbol{n}_j^h;
\end{equation} and \begin{equation}
    \theta^h|_{I_j}\coloneqq\theta_j^h,\qquad\text{satisfying}\quad\cos\theta^h_j=\frac{h_{j,x}}{|\boldsymbol{h}_j|},\quad\sin\theta_j^h=\frac{h_{j,y}}{|\boldsymbol{h}_j|}.
\end{equation}

We introduce the finite element subspaces \begin{subequations}
    \begin{align}
        &\mathbb{K}^h\coloneqq\left\{u^h\in C(\mathbb{I})\mid u^h|_{I_j}\in\mathcal{P}^1(I_j),\,\,\forall j=1,2,\dots,N\right\}\subseteq H^1(\mathbb{I}),\\
        &\mathbb{K}_p^h\coloneqq\{u^h\in\mathbb{K}^h\mid u^h(0)=u^h(1)\},\qquad\mathbb{X}_p^h\coloneqq[H_p^1(\mathbb{I})]^2,
    \end{align}
\end{subequations} where $\mathcal{P}^1(I_j)$ is the set of polynomials defined on $I_j$ of degree $\leq 1$. For $u,v\in\mathbb{K}^h$, the mass-lumped inner product $\Bigl(\cdot,\cdot\Bigr)^h_{\Gamma^h(t)}$ with respect to $\Gamma^h(t)$ is defined as \begin{equation}\label{eqn:mass lumped inner product}
    \Bigl(u, v\Bigr)^h_{\Gamma^h(t)}\coloneqq\frac{1}{2}\sum_{j=1}^N |\boldsymbol{h}_j(t)|\,\left((u\cdot v)(\rho_{j-1}^+)+(u\cdot v)(\rho_j^-)\right).
\end{equation}
where $u(\rho_j^\pm)=\lim\limits_{\rho\to \rho_j^\pm} u(\rho)$. And the discretized differential operator $\partial_s$ for $f\in\mathbb{K}^h$ is defined as \begin{equation}
    \partial_s f|_{I_j}\coloneqq\frac{f(\rho_j)-f(\rho_{j-1})}{|\boldsymbol{h}_j|}.
\end{equation} 
The above definitions also hold true for vector-valued functions. 

We now propose the spatial semi-discretization for \eqref{new variational formulation surface diffusion} as follows: Let $\Gamma_0^h\coloneqq \boldsymbol{X}^h(\cdot,0)\in\mathbb{X}^h_p,\mu(\cdot)\in\mathbb{K}^h_p$ be the approximations of $\Gamma_0\coloneqq \boldsymbol{X}_0(\cdot),\mu_0(\cdot)$, respectively, for $t>0$, find the solution $\left(\boldsymbol{X}^h(\cdot,t),\mu^h(\cdot)\right)\in\mathbb{X}_p^h\times\mathbb{K}_p^h$ such that \begin{subequations}\label{spatial semi-discretization surface diffusion}
    \begin{align}
            &\Bigl(\boldsymbol{n}^h\cdot \partial_t\boldsymbol{X}^h, \varphi^h\Bigr)_{\Gamma^h(t)}^h+\Bigl(\partial_s\mu^h, \partial_s\varphi^h\Bigr)_{\Gamma^h(t)}^h=0,\qquad \forall\varphi^h\in \mathbb{K}_p^h,\\
            &\Bigl(\mu^h\boldsymbol{n}^h,\boldsymbol{\omega}^h\Bigr)_{\Gamma^h(t)}^h-\Bigl(\hat{\boldsymbol{G}}_k(\theta^h)\partial_s\boldsymbol{X}^h,\partial_s\boldsymbol{\omega}^h\Bigr)_{\Gamma^h(t)}^h=0,\qquad\forall\boldsymbol{\omega}^h\in\mathbb{X}_p^h,
    \end{align}
\end{subequations} where \begin{equation}
    \hat{\boldsymbol{G}}_k(\theta^h)=\left(\begin{array}{cc}
        \hat{\gamma}(\theta^h) & -\hat{\gamma}^\prime(\theta^h)\\
        \hat{\gamma}^\prime(\theta^h) & \hat{\gamma}(\theta^h)
    \end{array}\right)+k(\theta^h)\left(\begin{array}{cc}
        \sin^2\theta^h & -\cos\theta^h\sin\theta^h\\
        -\cos\theta^h\sin\theta^h & \cos^2\theta^h
    \end{array}\right).
\end{equation}

Denote the enclosed area and the free energy of the polygonal curve $\Gamma^h(t)$ as $A^h_c(t)$ and  $W^h_c(t)$, respectively, which are given by \begin{subequations}\label{discrete area}
    \begin{align}
        &A^h_c(t)\coloneqq\frac{1}{2}\sum_{j=1}^N(x_j^h(t)-x_{j-1}^h(t))(y_j^h(t)+y_{j-1}^h(t)),\\
        &W^h_c(t)\coloneqq\sum_{j=1}^N|\boldsymbol{h}_j(t)|\hat{\gamma}(\theta^h_j),
    \end{align}
\end{subequations}
where $x_j^h(t)\coloneqq x^h(\rho_j, t), y_j^h(t)\coloneqq y^h(\rho_j, t),\,\, \forall 0\leq j\leq N$. 

Following similar steps in Proposition \ref{prop weak form structure preserving}, it can be proved that the two geometric properties for the semi-discretization \eqref{spatial semi-discretization surface diffusion} still preserves: \begin{prop}[area conservation and energy dissipation] Suppose $\Gamma^h(t)$ is given by the solution $(\boldsymbol{X}^h(\cdot, t), \mu^h(\cdot, t))$ of \eqref{spatial semi-discretization surface diffusion}, then we have
\begin{equation}
    A^h_c(t)\equiv A^h_c(0),\quad W^h_c(t)\leq W^h_c(t_1)\leq W^h_c(0), \quad \forall t\geq t_1\geq 0.
\end{equation}
\end{prop}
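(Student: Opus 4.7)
The plan is to mirror the continuous proof of Proposition~\ref{prop weak form structure preserving}, replacing each step with its discrete analogue. The two semi-discrete equations in \eqref{spatial semi-discretization surface diffusion} play exactly the roles of \eqref{new variational formulation surface diffusion a}--\eqref{new variational formulation surface diffusion b}, and the two conserved/dissipated quantities $A_c^h(t)$ and $W_c^h(t)$ are defined so that they differentiate cleanly under the mass-lumped inner product.

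For area conservation, I would first differentiate the shoelace expression in \eqref{discrete area} to obtain $\frac{d}{dt}A_c^h(t)=\frac12\sum_{j=1}^{N}\bigl[(\partial_t x_j^h-\partial_t x_{j-1}^h)(y_j^h+y_{j-1}^h)+(x_j^h-x_{j-1}^h)(\partial_t y_j^h+\partial_t y_{j-1}^h)\bigr]$, then reorganize edge-by-edge using $\boldsymbol{n}^h_j=-(h_{j,y},-h_{j,x})^T/|\boldsymbol{h}_j|$ to recognize this sum as $(\boldsymbol{n}^h\cdot\partial_t\boldsymbol{X}^h,1)^h_{\Gamma^h(t)}$. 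Since $1\in\mathbb{K}_p^h$, plugging $\varphi^h=1$ into the first equation of \eqref{spatial semi-discretization surface diffusion} gives $(\boldsymbol{n}^h\cdot\partial_t\boldsymbol{X}^h,1)^h_{\Gamma^h(t)}=-(\partial_s\mu^h,\partial_s 1)^h_{\Gamma^h(t)}=0$, establishing $A_c^h(t)\equiv A_c^h(0)$.

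For energy dissipation, the crucial identity to establish is the discrete counterpart of the chain of equalities used in the continuous proof:
\begin{equation*}
\frac{d}{dt}W_c^h(t)=\Bigl(\hat{\boldsymbol{G}}_k(\theta^h)\partial_s\boldsymbol{X}^h,\partial_s(\partial_t\boldsymbol{X}^h)\Bigr)^h_{\Gamma^h(t)}.
\end{equation*}
On each edge $I_j$ the quantities $\boldsymbol{\tau}_j^h$, $\boldsymbol{n}_j^h$, $\theta_j^h$ are all constant, so $\partial_s\boldsymbol{X}^h|_{I_j}=\boldsymbol{\tau}_j^h$ and $\partial_s(\partial_t\boldsymbol{X}^h)|_{I_j}=\partial_t\boldsymbol{h}_j/|\boldsymbol{h}_j|$. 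Using $\boldsymbol{n}_j^{h\,T}\boldsymbol{\tau}_j^h=0$ the stabilizing contribution drops out and $\hat{\boldsymbol{G}}_k(\theta_j^h)\boldsymbol{\tau}_j^h=\hat{\gamma}(\theta_j^h)\boldsymbol{\tau}_j^h-\hat{\gamma}'(\theta_j^h)(\boldsymbol{\tau}_j^h)^\perp$. Writing $\boldsymbol{h}_j=|\boldsymbol{h}_j|(\cos\theta_j^h,\sin\theta_j^h)^T$ and differentiating in time yields $\boldsymbol{\tau}_j^h\cdot\partial_t\boldsymbol{h}_j=\partial_t|\boldsymbol{h}_j|$ and $(\boldsymbol{\tau}_j^h)^\perp\cdot\partial_t\boldsymbol{h}_j=-|\boldsymbol{h}_j|\partial_t\theta_j^h$. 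Assembling these, each edge summand becomes $\hat{\gamma}(\theta_j^h)\partial_t|\boldsymbol{h}_j|+\hat{\gamma}'(\theta_j^h)|\boldsymbol{h}_j|\partial_t\theta_j^h=\partial_t(|\boldsymbol{h}_j|\hat{\gamma}(\theta_j^h))$, which is exactly $\frac{d}{dt}W_c^h(t)$. The main obstacle is precisely this edgewise bookkeeping — one must verify carefully that the discrete tangent/normal geometry and the mass-lumped pairing conspire to reproduce the continuous identity without error terms.

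Once the identity is in hand, the proof concludes by choosing test functions $\boldsymbol{\omega}^h=\partial_t\boldsymbol{X}^h\in\mathbb{X}_p^h$ in the second semi-discrete equation and $\varphi^h=\mu^h\in\mathbb{K}_p^h$ in the first, giving
\begin{equation*}
\frac{d}{dt}W_c^h(t)=\Bigl(\mu^h\boldsymbol{n}^h,\partial_t\boldsymbol{X}^h\Bigr)^h_{\Gamma^h(t)}=-\Bigl(\partial_s\mu^h,\partial_s\mu^h\Bigr)^h_{\Gamma^h(t)}\le 0,
\end{equation*}
from which the monotone chain $W_c^h(t)\le W_c^h(t_1)\le W_c^h(0)$ for $t\ge t_1\ge 0$ follows by integration in time. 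Note that the non-negativity of $k(\theta)$ is not used at this semi-discrete level — the stabilizing term silently cancels out by orthogonality, exactly as in the continuous case; its role will only surface in the fully-discretized analysis where $\boldsymbol{n}^h\cdot\partial_s\boldsymbol{X}^h$ no longer vanishes at the new time step.
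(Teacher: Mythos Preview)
Your proposal is correct and follows exactly the route the paper indicates: the paper does not actually write out a proof of this proposition but simply states that it is obtained ``following similar steps in Proposition~\ref{prop weak form structure preserving}'', which is precisely what you do by replaying the continuous argument edge-by-edge with the mass-lumped pairing. The edgewise computation you outline (in particular the identities $\boldsymbol{\tau}_j^h\cdot\partial_t\boldsymbol{h}_j=\partial_t|\boldsymbol{h}_j|$ and $(\boldsymbol{\tau}_j^h)^\perp\cdot\partial_t\boldsymbol{h}_j=-|\boldsymbol{h}_j|\,\partial_t\theta_j^h$, together with the cancellation of the stabilizing term via $\boldsymbol{n}_j^h\cdot\boldsymbol{\tau}_j^h=0$) is exactly the discrete analogue of the chain-rule manipulations in the continuous proof, and the area step needs only a summation-by-parts using periodicity to pass from the shoelace derivative to $(\boldsymbol{n}^h\cdot\partial_t\boldsymbol{X}^h,1)^h_{\Gamma^h(t)}$.
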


\subsection{A structure-preserving SPFEM discretization}

Let $\tau$ be the uniform time step. Denoting the approximation of $\Gamma(t)=\boldsymbol{X}(\cdot,t)$ at $t_m=m\tau, m=0,1,\dots,$ as $\Gamma^m=\boldsymbol{X}^m(\cdot)=\cup_{j=1}^N\boldsymbol{h}_j^m$ where $\boldsymbol{h}_j^m\coloneqq\boldsymbol{X}^m(\rho_j)-\boldsymbol{X}^m(\rho_{j-1})$. Then the definitions of the mass lumped inner product $(\cdot,\cdot)^h_{\Gamma^m}$, the unit tangential vector $\boldsymbol{\tau}^m$, the unit outward normal vector $\boldsymbol{n}^m$, and the inclination angle $\theta^m$ with respect to $\Gamma^m$ can be given in a similar approach.

Following the ideas in \cite{bao2021structure, jiang2021perimeter,bao2023symmetrized2D, bao2023structure} to design an SP-PFEM for surface diffusion, we utilize the explicit-implicit Euler method in time. The derived fully-implicit structure-preserving discretization of SPFEM for the anisotropic surface diffusion \eqref{eqn:surface diffusion reformulate} is expressed as follows: 

Suppose the initial approximation $\Gamma^0(\cdot)\in \mathbb{X}^h$ is given by $\boldsymbol{X}^0(\rho_j)=\boldsymbol{X}_0(\rho_j), \forall 0\leq j\leq N$. For any $m=0, 1, 2, \ldots$, find the solution $(\boldsymbol{X}^{m+1}(\cdot)=(x^{m+1}(\cdot),y^{m+1}(\cdot))^T, \mu^{m+1}(\cdot))\in \mathbb{X}^h_p\times \mathbb{K}^h_p$ such that \begin{subequations}\label{eqn:sp-pfem surface diffusion}
    \begin{align}
            &\Bigl(\boldsymbol{n}^{m+\frac{1}{2}}\cdot \partial_t\boldsymbol{X}^{m+1}, \varphi^h\Bigr)_{\Gamma^m}^h+\Bigl(\partial_s\mu^{m+1}, \partial_s\varphi^h\Bigr)_{\Gamma^m}^h=0,\qquad \forall\varphi^h\in \mathbb{K}_p^h,\\
            &\Bigl(\mu^{m+1}\boldsymbol{n}^{m+\frac{1}{2}},\boldsymbol{\omega}^h\Bigr)_{\Gamma^m}^h-\Bigl(\hat{\boldsymbol{G}}_k(\theta^m)\partial_s\boldsymbol{X}^{m+1},\partial_s\boldsymbol{\omega}^h\Bigr)_{\Gamma^m}^h=0,\qquad\forall\boldsymbol{\omega}^h\in\mathbb{X}_p^h,
    \end{align}
\end{subequations} where \begin{equation}
    \hat{\boldsymbol{G}}_k(\theta^m)=\left(\begin{array}{cc}
        \hat{\gamma}(\theta^m) & -\hat{\gamma}^\prime(\theta^m)\\
        \hat{\gamma}^\prime(\theta^m) & \hat{\gamma}(\theta^m)
    \end{array}\right)+k(\theta^m)\left(\begin{array}{cc}
        \sin^2\theta^m & -\cos\theta^m\sin\theta^m\\
        -\cos\theta^m\sin\theta^m & \cos^2\theta^m
    \end{array}\right),
\end{equation} and \begin{equation}\label{dis surface energy matrix}
    \boldsymbol{n}^{m+\frac{1}{2}}\coloneqq -\frac{1}{2}\frac{1}{|\partial_\rho \boldsymbol{X}^m|}(\partial_\rho \boldsymbol{X}^m+\partial_\rho\boldsymbol{X}^{m+1})^\perp.
\end{equation}

\begin{remark}
    The above scheme is weakly implicit, as the integral domain is explicitly chosen and each equation contains only one non-linear term. The nonlinear term is a polynomial function of degree $\leq 2$ with respect to the components of $\boldsymbol{X}^{m+1}$ and $\mu^{m+1}$, thus it can be efficiently and accurately solved by the Newton's iterative method similar to \cite{bao2021structure}.
\end{remark}

\begin{remark}
    The choice of $\boldsymbol{n}^{m+\frac{1}{2}}$ is crucial for maintaining the area conservation. The scheme becomes semi-implicit if $\boldsymbol{n}^{m+\frac{1}{2}}$ is replaced by $\boldsymbol{n}^m$, and only the energy dissipation property is preserved.
\end{remark}

\subsection{Main results}

Denote the enclosed area and the free energy of the polygon $\Gamma^m$ as $A^m_c$ and  $W^m_c$, respectively, which are given by
\begin{subequations}\label{fully discrete area and energy}
\begin{align}
\label{fully discrete area and energy, area}
A^m_c&\coloneqq\frac{1}{2}\sum_{j=1}^N\left(x^m(\rho_j)-x^m(\rho_{j-1})\right)\left(y^m(\rho_j)
+y^m(\rho_{j-1})\right),\\
\label{fully discrete area and energy, energy}
W^m_c&\coloneqq\sum_{j=1}^N|\boldsymbol{h}_j^m|\hat{\gamma}(\theta^m_j).
\end{align}
\end{subequations} 

In practical applications, it's common to encounter situations where $\hat{\gamma}(\theta)$ lacks high regularity. In the following sections, we always assume that $\hat{\gamma}(\theta)$ is globally $C^1$ and piecewise $C^2$ on $2\pi\mathbb{T}$. 



We thus introduce the following energy stable conditions on $\hat{\gamma}(\theta)$:
\begin{defi}[energy stable condition]\label{def anisotropic stable}
Suppose $\hat{\gamma}(\theta)$ is globally $C^1$ and piecewise $C^2$, the energy stable conditions on $\hat{\gamma}(\theta)$ are given as follows:
\begin{subequations}\label{es condition on gamma}
    \begin{align}
        &3\hat{\gamma}(\theta)\geq\hat{\gamma}(\theta-\pi),\qquad\forall\theta\in 2\pi\mathbb{T},\\
        &\hat{\gamma}^\prime(\theta^*)=0,\,\,\text{when}\,\,3\hat{\gamma}(\theta^*)=\hat{\gamma}(\theta^*-\pi),\qquad\theta^*\in 2\pi\mathbb{T}.
    \end{align}
\end{subequations}
\end{defi}

The main result of this work is the following structure-preserving property of the SPFEM \eqref{eqn:sp-pfem surface diffusion}:

\begin{thm}[structure-preserving]\label{MAIN RESULT}
    For any $\hat{\gamma}(\theta)$ satisfying \eqref{es condition on gamma}, the SPFEM \eqref{eqn:sp-pfem surface diffusion} is area conservative and unconditional energy dissipative with sufficiently large $k(\theta)$, i.e. \begin{equation}\label{thm:main results}
        A^{m+1}_c=A^m_c=\cdots=A^0_c,\qquad W^{m+1}_c\leq W^m_c\leq\cdots\leq W^0_c,\qquad\forall m\geq 0.
    \end{equation}
\end{thm}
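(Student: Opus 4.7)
The plan is to verify the two conservation properties separately, with the energy inequality forming the substantive content.

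For area conservation, I would take $\varphi^h\equiv 1$ in the first equation of \eqref{eqn:sp-pfem surface diffusion}, which kills the $\partial_s\mu^{m+1}$ term and leaves $\bigl(\boldsymbol{n}^{m+\frac{1}{2}}\cdot(\boldsymbol{X}^{m+1}-\boldsymbol{X}^m),1\bigr)^h_{\Gamma^m}=0$. The main calculation is then a purely algebraic identity: expanding $A^{m+1}_c-A^m_c$ from \eqref{fully discrete area and energy, area}, one factors each summand as a polynomial in $x^m_j,x^m_{j-1},x^{m+1}_j,x^{m+1}_{j-1},y^m_j,\ldots$ and checks, edge by edge, that the result matches $\tau\bigl(\boldsymbol{n}^{m+\frac{1}{2}}\cdot\partial_t\boldsymbol{X}^{m+1},1\bigr)^h_{\Gamma^m}$ when $\boldsymbol{n}^{m+\frac{1}{2}}$ is the trapezoidal average \eqref{dis surface energy matrix}. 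This is exactly the motivation for the particular choice of $\boldsymbol{n}^{m+\frac{1}{2}}$ highlighted in the remark following \eqref{dis surface energy matrix}.

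For energy dissipation, I would follow the standard ES-PFEM pairing: take $\varphi^h=\mu^{m+1}$ in the scalar equation and $\boldsymbol{\omega}^h=\boldsymbol{X}^{m+1}-\boldsymbol{X}^m$ in the vector equation of \eqref{eqn:sp-pfem surface diffusion}, then subtract. The cross terms involving $\mu^{m+1}\boldsymbol{n}^{m+\frac{1}{2}}\cdot(\boldsymbol{X}^{m+1}-\boldsymbol{X}^m)$ cancel, yielding
\begin{equation}
\bigl(\hat{\boldsymbol{G}}_k(\theta^m)\partial_s\boldsymbol{X}^{m+1},\partial_s(\boldsymbol{X}^{m+1}-\boldsymbol{X}^m)\bigr)^h_{\Gamma^m}=-\tau\bigl(\partial_s\mu^{m+1},\partial_s\mu^{m+1}\bigr)^h_{\Gamma^m}\leq 0.
\end{equation}
Thus it suffices to establish the \emph{local energy estimate}
\begin{equation}\label{eqn:local energy estimate plan}
W^{m+1}_c-W^m_c\leq\bigl(\hat{\boldsymbol{G}}_k(\theta^m)\partial_s\boldsymbol{X}^{m+1},\partial_s(\boldsymbol{X}^{m+1}-\boldsymbol{X}^m)\bigr)^h_{\Gamma^m}.
\end{equation}
Unwinding the mass-lumped inner product, \eqref{eqn:local energy estimate plan} reduces edge by edge to a purely geometric pointwise inequality of the form
\begin{equation}
|\tilde{\boldsymbol{h}}|\hat{\gamma}(\tilde{\theta})-|\boldsymbol{h}|\hat{\gamma}(\theta)\leq\frac{1}{|\boldsymbol{h}|}\hat{\boldsymbol{G}}_k(\theta)\tilde{\boldsymbol{h}}\cdot(\tilde{\boldsymbol{h}}-\boldsymbol{h}),
\end{equation}
where $\boldsymbol{h},\tilde{\boldsymbol{h}}$ are arbitrary edge vectors with angles $\theta,\tilde{\theta}\in 2\pi\mathbb{T}$.

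The main obstacle is to prove this pointwise inequality, which is where the stabilizing function $k(\theta)$ and the conditions \eqref{es condition on gamma} enter. My plan is to follow the roadmap sketched in Section \ref{section structure-preserving properties} of the paper: define the \emph{minimal stabilizing function}
\begin{equation}
k_0(\theta)\coloneqq\sup_{\tilde{\theta},\,r>0}\,\mathcal{F}(\theta,\tilde{\theta},r),
\end{equation}
where $\mathcal{F}$ is the quantity whose non-positivity is equivalent to the pointwise inequality after normalizing $|\boldsymbol{h}|=1$ and setting $r=|\tilde{\boldsymbol{h}}|$. Then for any $k(\theta)\geq k_0(\theta)$ the local estimate holds by construction, so that the only remaining task is to show $k_0(\theta)<+\infty$ pointwise — that is, the supremum is finite. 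Away from the diagonal $\tilde{\theta}=\theta-\pi$ one expects a routine compactness argument using the strict inequality $3\hat{\gamma}(\theta)>\hat{\gamma}(\theta-\pi)$; the delicate case is the \emph{critical situation} where equality holds at some $\theta^*$, and this is precisely where the second condition in \eqref{es condition on gamma}, $\hat{\gamma}'(\theta^*)=0$, must be used to cancel a leading singular term in $\mathcal{F}$. I expect this critical-case analysis, requiring a careful Taylor expansion of $\hat{\gamma}$ around $\theta^*$ and of $\hat{\gamma}(\theta-\pi)$ around $\theta^*-\pi$ to second order (exploiting piecewise $C^2$ regularity), to be the technical heart of the proof and the step whose verification is deferred to Section \ref{section: existence of k0}. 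Granting the finiteness of $k_0$, the energy dissipation conclusion of \eqref{thm:main results} follows immediately.
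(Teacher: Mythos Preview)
Your proposal is correct and follows essentially the same route as the paper: the test-function pairing $\varphi^h=\mu^{m+1}$, $\boldsymbol{\omega}^h=\boldsymbol{X}^{m+1}-\boldsymbol{X}^m$, the reduction to an edge-wise local energy estimate (Lemma~\ref{lemma:local energy estimate}), the definition of a minimal stabilizing function $k_0(\theta)$, and the finiteness argument via Taylor expansion at $\phi=0$ and $\phi=\pi$ (with the condition $\hat{\gamma}'(\theta^*)=0$ handling the critical case) all match the paper's Sections~\ref{section structure-preserving properties}--\ref{section: existence of k0}. The only cosmetic difference is that the paper eliminates your extra variable $r=|\tilde{\boldsymbol{h}}|$ by completing the square, recasting the pointwise inequality as $P_{k(\theta)}(\phi,\theta)\geq Q(\phi,\theta)$ in the single angular variable $\phi=\theta-\tilde{\theta}$; this streamlines the compactness argument but is otherwise equivalent to your formulation.
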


The proof of area conservation part is analogous to \cite[Theorem~2.1]{bao2021structure}, we omit here for brevity. And the energy dissipation will be established in the next section.

\begin{remark}\label{kfold remark}
    For the $m$-fold anisotropy \eqref{eqn:k-fold anisotropy}: the energy stable condition \eqref{es condition on gamma} holds when $|\beta|<1$ and $|\beta|\leq \frac{1}{2}$ for $m$ being even and odd, respectively. It is a significant improvement compared to the energy stable condition $|\beta|\leq\frac{1}{m^2+1}$ in \cite{li2021energy}.
\end{remark}
\begin{remark}\label{ellipsoidal remark}
    For the ellipsoidal anisotropy \eqref{eqn:ellipsoidal anisotropy}: the energy stable condition in \cite{li2021energy} requires $-\frac{a}{2}\leq b\leq a$; while condition \eqref{es condition on gamma} is satisfied for any $a>0,a+b>0$.
\end{remark}
\begin{remark}\label{symmetric remark}
    It is noteworthy that for any symmetric anisotropy $\hat{\gamma}(\theta)$ satisfying $\hat{\gamma}(\theta)=\hat{\gamma}(\theta-\pi),\,\,\forall\theta\in 2\pi\mathbb{T}$, such as the Riemannian-like metric anisotropy \eqref{eqn:BNG anisotropy}, condition \eqref{es condition on gamma} naturally holds. Thereby it ensures the unconditional energy stability of the SPFEM \eqref{eqn:sp-pfem surface diffusion}.
\end{remark}

\section{Unconditionally energy stability of the SPFEM}\label{section structure-preserving properties}

The key point in proving energy dissipation of \eqref{eqn:sp-pfem surface diffusion} is to establish an energy estimate akin to \begin{equation}
    \left(\hat{\boldsymbol{G}}_k(\theta^m)\partial_s\boldsymbol{X}^{m+1},\partial_s(\boldsymbol{X}^{m+1}-\boldsymbol{X}^m)\right)_{\Gamma^m}^h\geq W_c^{m+1}-W_c^m,
\end{equation}  for controlling the energy difference between two subsequent time steps with the surface energy matrix $\hat{\boldsymbol{G}}_k$. To achieve desired inequality, we need a local version of the estimate, which is formulated by the following lemma: \begin{lem}[local energy estimate]\label{lemma:local energy estimate}
    Suppose $\boldsymbol{h},\hat{\boldsymbol{h}}$ are two non-zero vectors in $\mathbb{R}^2$. Let $\boldsymbol{n}=-\frac{\boldsymbol{h}^\perp}{|\boldsymbol{h}|}=(-\sin\theta,\cos\theta)^T$ and $\hat{\boldsymbol{n}}=-\frac{\hat{\boldsymbol{h}}^\perp}{|\hat{\boldsymbol{h}}|}=(-\sin\hat{\theta},\cos\hat{\theta})^T$ be the corresponding unit normal vectors. Then for sufficiently large $k(\theta)$, the following inequality holds
    \begin{equation}\label{eqn:local energy estimate}
        \frac{1}{|\boldsymbol{h}|}\left(\hat{\boldsymbol{G}}_k(\theta)\hat{\boldsymbol{h}}\right)\cdot(\hat{\boldsymbol{h}}-\boldsymbol{h})\geq|\hat{\boldsymbol{h}}|\hat{\gamma}(\hat{\theta})-|\boldsymbol{h}|\hat{\gamma}(\theta).
    \end{equation}
\end{lem}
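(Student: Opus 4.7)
The plan is to reduce the vector inequality \eqref{eqn:local energy estimate} to a pointwise algebraic condition on $k(\theta)$ by exploiting the quadratic structure that appears after rescaling. First I would substitute the decomposition $\hat{\boldsymbol{G}}_k(\theta) = \hat\gamma(\theta) I_2 + \hat\gamma'(\theta) J + k(\theta)\boldsymbol{n}\boldsymbol{n}^T$ (already established in \eqref{Gktau}) and parametrize $\boldsymbol{h} = |\boldsymbol{h}|(\cos\theta, \sin\theta)^T$, $\hat{\boldsymbol{h}} = |\hat{\boldsymbol{h}}|(\cos\hat\theta, \sin\hat\theta)^T$. Setting $\alpha = \hat\theta - \theta$, and using $J\hat{\boldsymbol{\tau}} = \hat{\boldsymbol{n}}$ together with $\boldsymbol{n}\cdot\hat{\boldsymbol{\tau}} = \sin\alpha$, every inner product that appears collapses to an elementary trigonometric expression in $\alpha$ and the two lengths $|\boldsymbol{h}|$, $|\hat{\boldsymbol{h}}|$.

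Introducing $r = |\hat{\boldsymbol{h}}|/|\boldsymbol{h}|$ and dividing through by $|\boldsymbol{h}|$, the inequality \eqref{eqn:local energy estimate} recasts as the quadratic condition
\begin{equation*}
    [\hat\gamma(\theta) + k(\theta)\sin^2\alpha]\, r^2 - b(\theta, \alpha)\, r + \hat\gamma(\theta) \geq 0, \qquad \forall r > 0,
\end{equation*}
where $b(\theta, \alpha) := \hat\gamma(\theta)\cos\alpha - \hat\gamma'(\theta)\sin\alpha + \hat\gamma(\theta + \alpha)$. Since the leading and constant coefficients are strictly positive, a standard discriminant analysis shows that non-negativity for all $r > 0$ is equivalent to either $b(\theta,\alpha) \leq 0$ or the bound $b(\theta,\alpha)^2 \leq 4\hat\gamma(\theta)[\hat\gamma(\theta) + k(\theta)\sin^2\alpha]$.

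Solving the discriminant inequality for $k(\theta)$ motivates introducing the \emph{minimal stabilizing function}
\begin{equation*}
    k_0(\theta) := \sup_{\alpha \in (0,2\pi),\ \sin\alpha \neq 0}\, \max\!\left\{0,\ \frac{b(\theta,\alpha)^2 - 4\hat\gamma(\theta)^2}{4\hat\gamma(\theta)\sin^2\alpha}\right\}.
\end{equation*}
Assuming $k_0(\theta)$ is finite, any pointwise choice $k(\theta) \geq k_0(\theta)$ makes the quadratic non-negative for every $(r,\alpha)$, which is precisely \eqref{eqn:local energy estimate}.

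The main obstacle is establishing that $k_0(\theta) < \infty$ for every $\theta \in 2\pi\mathbb{T}$. The quotient in the sup can blow up only as $\sin\alpha \to 0$. At $\alpha = 0$ one checks $b(\theta,0) = 2\hat\gamma(\theta)$, so the numerator vanishes and a direct Taylor expansion reveals the zero has order $\geq 2$, absorbing the $\sin^2\alpha$ in the denominator. At $\alpha = \pi$, condition (i) of \eqref{es condition on gamma} forces $|b(\theta,\pi)|^2 \leq 4\hat\gamma(\theta)^2$, so the numerator is non-positive in general. The delicate case is saturation $3\hat\gamma(\theta^*) = \hat\gamma(\theta^* - \pi)$: then first-order optimality of $f(\theta) := 3\hat\gamma(\theta) - \hat\gamma(\theta - \pi) \geq 0$ at its minimum $\theta^*$ combined with condition (ii) $\hat\gamma'(\theta^*) = 0$ forces $\hat\gamma'(\theta^* - \pi) = 0$ as well, which annihilates the linear-in-$(\alpha - \pi)$ coefficient in the expansion of $b(\theta^*,\alpha)^2 - 4\hat\gamma(\theta^*)^2$ and keeps the quotient bounded near $\alpha = \pi$. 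Making this heuristic rigorous and uniform in $\theta$ is the substance of section~\ref{section: existence of k0}; once $k_0$ is known to be well-defined, the lemma follows at once from the reduction above.
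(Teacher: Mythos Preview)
Your argument is correct and follows essentially the same route as the paper. The paper sets $\phi=\theta-\hat\theta$ (the negative of your $\alpha$), introduces the auxiliary functions $P_\alpha(\phi,\theta)=2\sqrt{\hat\gamma^2(\theta)+\alpha\hat\gamma(\theta)\sin^2\phi}$ and $Q(\phi,\theta)=\hat\gamma(\theta-\phi)+\hat\gamma(\theta)\cos\phi+\hat\gamma'(\theta)\sin\phi$ (your $b$), and completes the square rather than invoking the discriminant directly; its minimal stabilizing function is written as $k_0(\theta)=\inf\{\alpha\ge0\mid P_\alpha\ge Q\ \forall\phi\}$, which is equivalent to your sup formula. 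The finiteness of $k_0$ is likewise deferred to a separate section with the same Taylor-expansion analysis at $\phi=0$ and $\phi=\pi$ that you sketch.
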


\begin{remark}
    It is noteworthy to mention that the condition \eqref{es condition on gamma} is almost sufficient to the local energy estimate. Let $\hat{\boldsymbol{h}}=-\boldsymbol{h}$ and $\hat{\theta}=\theta-\pi$. Consequently, the inequality \eqref{eqn:local energy estimate} becomes $2|\boldsymbol{h}|\hat{\gamma}(\theta)\geq |\boldsymbol{h}|\hat{\gamma}(\theta-\pi)-|\boldsymbol{h}|\hat{\gamma}(\theta)$. Therefore, our energy stability condition \eqref{es condition on gamma} proves to be almost essential for the local energy estimate.
\end{remark}

\subsection{The minimal stabilizing function and its properties}

To prove the local energy estimate \eqref{eqn:local energy estimate}, the following two auxiliary functions are introduced as: \begin{subequations}\label{eqn:auxiliary functions}
    \begin{align}
        &P_\alpha(\phi,\theta)\coloneqq 2\sqrt{\hat{\gamma}^2(\theta)+\alpha\hat{\gamma}(\theta)\sin^2\phi},\,\,\,\,\,\quad\quad\quad\qquad\forall\phi\in2\pi\mathbb{T},\\
        &Q(\phi,\theta)\coloneqq\hat{\gamma}(\theta-\phi)+\hat{\gamma}(\theta)\cos\phi+\hat{\gamma}^\prime(\theta)\sin\phi,\qquad\forall\phi\in2\pi\mathbb{T}.
    \end{align}
\end{subequations} With the help of $P_\alpha,Q$, we present the definition of minimal stabilizing function as follows: \begin{equation}\label{eqn:def minimal stabilizing function}
    k_0(\theta)\coloneqq\inf\left\{\alpha\geq 0\mid P_\alpha(\phi,\theta)-Q(\phi,\theta)\geq 0,\,\,\forall\phi\in 2\pi\mathbb{T}\right\}.
\end{equation}

The following theorem guarantees the existence of $k_0(\theta)$: \begin{thm}\label{thm:existence of k0}
    For $\hat{\gamma}(\theta)$ satisfying \eqref{es condition on gamma}, the minimal stabilizing function $k_0(\theta)$, as given in \eqref{eqn:def minimal stabilizing function}, is well-defined.
\end{thm}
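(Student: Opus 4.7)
The plan is to fix an arbitrary $\theta \in 2\pi\mathbb{T}$ and exhibit one finite $\alpha = \alpha(\theta) \geq 0$ for which $F_\alpha(\phi,\theta) \coloneqq P_\alpha(\phi,\theta) - Q(\phi,\theta) \geq 0$ holds for every $\phi \in 2\pi\mathbb{T}$; since $P_\alpha$ is non-decreasing in $\alpha \geq 0$ while $Q$ is independent of $\alpha$, this will show that the defining set in \eqref{eqn:def minimal stabilizing function} is nonempty, and since it is bounded below by zero the infimum will exist as a non-negative real number. The only $\phi$ at which $\alpha$ does not directly push $P_\alpha$ to infinity are the zeros of $\sin^2\phi$, namely $\phi = 0$ and $\phi = \pi$. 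For any fixed $\delta > 0$, on the compact complement $\{|\phi|\geq \delta\text{ and }|\phi-\pi|\geq \delta\}$ one has $\sin^2\phi\geq \sin^2\delta > 0$, so $P_\alpha\to+\infty$ uniformly as $\alpha\to\infty$ while $Q$ is bounded by continuity; the entire problem thus reduces to arranging $F_\alpha \geq 0$ in fixed small neighborhoods of $\phi=0$ and $\phi=\pi$.

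Near $\phi = 0$ I will Taylor-expand: $F_\alpha(0,\theta) = 0$, and the linear coefficient vanishes (the $-\hat{\gamma}'(\theta)$ from expanding $\hat{\gamma}(\theta-\phi)$ cancels the $+\hat{\gamma}'(\theta)$ from $\hat{\gamma}'(\theta)\sin\phi$), leaving
\begin{equation*}
F_\alpha(\phi,\theta) = \left[\alpha - \frac{\hat{\gamma}''(\theta) - \hat{\gamma}(\theta)}{2}\right]\phi^2 + O(\phi^3),
\end{equation*}
so choosing $\alpha > (\hat{\gamma}''(\theta) - \hat{\gamma}(\theta))/2$ makes $F_\alpha \geq 0$ in a neighborhood of $0$. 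Writing $\phi = \pi+\psi$ and expanding analogously near $\psi=0$ produces
\begin{equation*}
F_\alpha(\pi+\psi,\theta) = \bigl[3\hat{\gamma}(\theta) - \hat{\gamma}(\theta-\pi)\bigr] + \bigl[\hat{\gamma}'(\theta-\pi) + \hat{\gamma}'(\theta)\bigr]\psi + \left[\alpha - \frac{\hat{\gamma}''(\theta-\pi) + \hat{\gamma}(\theta)}{2}\right]\psi^2 + O(\psi^3),
\end{equation*}
and when $3\hat{\gamma}(\theta) > \hat{\gamma}(\theta-\pi)$ the strictly positive constant term implies $F_\alpha \geq 0$ near $\psi=0$ by continuity for every $\alpha$.

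The hard part will be the critical case $3\hat{\gamma}(\theta^*) = \hat{\gamma}(\theta^*-\pi)$: the constant term vanishes, and unless the linear-in-$\psi$ coefficient also vanishes the expansion cannot be non-negative on both sides of $\psi=0$. This is precisely where the two parts of the energy-stability assumption \eqref{es condition on gamma} will interlock. I will set $f(\theta) \coloneqq 3\hat{\gamma}(\theta) - \hat{\gamma}(\theta-\pi)$ and use that, by condition (i), $f$ is globally $C^1$ and pointwise non-negative, and that $f(\theta^*) = 0$ makes $\theta^*$ a global minimizer of $f$; hence $f'(\theta^*) = 0$, i.e.\ $\hat{\gamma}'(\theta^*-\pi) = 3\hat{\gamma}'(\theta^*)$. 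Condition (ii) then supplies $\hat{\gamma}'(\theta^*) = 0$, so also $\hat{\gamma}'(\theta^*-\pi) = 0$, and the linear coefficient $\hat{\gamma}'(\theta^*-\pi) + \hat{\gamma}'(\theta^*)$ is exactly zero. Taking $\alpha$ larger than both quadratic thresholds from the second paragraph and larger than the bulk threshold from the first paragraph then yields a single $\alpha(\theta)$ with $F_\alpha(\cdot,\theta) \geq 0$ on all of $2\pi\mathbb{T}$, which is what is needed to conclude that $k_0(\theta)$ is well-defined. The only formal subtlety I foresee is that, when $\theta$ or $\theta-\pi$ lies on the discrete set of non-smooth points of $\hat{\gamma}$, the second-order expansions should be interpreted via one-sided Taylor remainders, which will not affect the sign conclusions.
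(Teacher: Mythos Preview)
Your proposal is correct and follows essentially the same route as the paper: Taylor-expand near $\phi=0$ and $\phi=\pi$, use first-order optimality of $3\hat\gamma(\theta)-\hat\gamma(\theta-\pi)$ at a critical $\theta^*$ together with condition (ii) of \eqref{es condition on gamma} to kill the linear-in-$\psi$ term at $\phi=\pi$, and handle the remaining compact set via the uniform lower bound on $\sin^2\phi$. The only cosmetic difference is that the paper works with $P_\alpha^2-Q^2$ (linear in $\alpha$, no square root in the expansions) rather than with $P_\alpha-Q$ directly; your monotonicity-in-$\alpha$ observation at the outset is exactly what makes the direct route equally valid.
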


The proof of Theorem~\ref{thm:existence of k0} will be presented in Section \ref{section: existence of k0}.\\

Once the $\hat{\gamma}(\theta)$ is given, the minimal stabilizing function $k_0(\theta)$ is determined, inducing a mapping from $\hat{\gamma}(\theta)$ to $k_0(\theta)$. Moreover, this mapping is sublinear, i.e., it is positive homogeneity and subadditivity.

\begin{lem}[positive homogeneity and subadditivity]
    Let $\hat{\gamma}_i(\theta)$ for $i=1,2,3$ denote anisotropies satisfying \eqref{es condition on gamma}, each corresponding to its minimal stabilizing functions $k_i(\theta)$. Then, we have
\begin{enumerate}[(i)]
    \item if $\hat{\gamma}_2(\theta)=c\hat{\gamma}_1(\theta)$ for a given positive number $c>0$, then $k_2(\theta)=ck_1(\theta)$;\\
    \item if $\hat{\gamma}_3(\theta)=\hat{\gamma}_1(\theta)+\hat{\gamma}_2(\theta)$, then $k_3(\theta)\leq k_1(\theta)+k_2(\theta)$.
\end{enumerate}
\end{lem}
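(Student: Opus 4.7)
The plan is to exploit the algebraic structure of the two auxiliary functions in \eqref{eqn:auxiliary functions}: $Q(\phi,\theta)$ is linear in $\hat{\gamma}$, while $P_\alpha(\phi,\theta)$ has a simple scaling behavior in $\hat{\gamma}$ and is continuous and monotone increasing in $\alpha$. The latter two properties show that the admissible set in \eqref{eqn:def minimal stabilizing function} is a closed half-line $[k_0(\theta),\infty)$, so the infimum is attained and one has $P_{k_i(\theta)}^{(i)}(\phi,\theta)\geq Q^{(i)}(\phi,\theta)$ pointwise in $\phi$ without any limiting argument. Both conclusions of the lemma then follow by tracking how $P$ and $Q$ behave under the two operations.

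For part (i), substituting $\hat{\gamma}_2 = c\hat{\gamma}_1$ gives $Q^{(2)} = c\,Q^{(1)}$ by linearity and
\begin{equation*}
P_\alpha^{(2)}(\phi,\theta) = 2\sqrt{c^2\hat{\gamma}_1^2(\theta)+c\alpha\,\hat{\gamma}_1(\theta)\sin^2\phi} = c\,P_{\alpha/c}^{(1)}(\phi,\theta),
\end{equation*}
so $P_\alpha^{(2)}-Q^{(2)} = c\bigl(P_{\alpha/c}^{(1)}-Q^{(1)}\bigr)$. Since $c>0$, the admissible set of $\alpha$ for $\hat{\gamma}_2$ is precisely $c$ times the one for $\hat{\gamma}_1$, and taking infima yields $k_2(\theta) = c\,k_1(\theta)$.

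For part (ii), the strategy is to show directly that $\alpha = k_1(\theta)+k_2(\theta)$ belongs to the admissible set for $\hat{\gamma}_3$, whence $k_3(\theta)\leq k_1(\theta)+k_2(\theta)$ by the infimum definition. Using linearity of $Q$ together with minimality for each summand, $Q^{(3)} = Q^{(1)}+Q^{(2)} \leq P_{k_1}^{(1)}+P_{k_2}^{(2)}$, so it suffices to prove the pointwise Minkowski-type inequality
\begin{equation*}
P_{k_1(\theta)+k_2(\theta)}^{(3)}(\phi,\theta) \;\geq\; P_{k_1(\theta)}^{(1)}(\phi,\theta) + P_{k_2(\theta)}^{(2)}(\phi,\theta).
\end{equation*}
Since both sides are non-negative, I would square once, cancel the terms that appear on both sides, and square once more; the $\sin^4\phi$ coefficient then collapses to $(k_1\hat{\gamma}_2-k_2\hat{\gamma}_1)^2\geq 0$, which is just AM--GM between $k_1\hat{\gamma}_2$ and $k_2\hat{\gamma}_1$. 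Before invoking Theorem~\ref{thm:existence of k0} to ensure $k_3(\theta)$ is actually well-defined, I would also verify that $\hat{\gamma}_3$ itself satisfies \eqref{es condition on gamma}: the inequality $3\hat{\gamma}_3\geq \hat{\gamma}_3(\cdot-\pi)$ follows by summing the two copies, and an equality at some $\theta^*$ forces equality for both $i=1,2$, which by hypothesis forces $\hat{\gamma}_1^\prime(\theta^*)=\hat{\gamma}_2^\prime(\theta^*)=0$ and hence $\hat{\gamma}_3^\prime(\theta^*)=0$. The only non-routine step in the whole argument is spotting the two-step squaring that reduces the Minkowski-type inequality to AM--GM; once that is recognized, the rest is bookkeeping.
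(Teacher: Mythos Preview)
Your proof is correct. The paper itself omits the proof entirely, referring the reader to \cite[Lemma~4.4]{bao2023symmetrized2D}; your argument---exploiting linearity of $Q$, the scaling $P_\alpha^{(2)}=c\,P_{\alpha/c}^{(1)}$, and the Minkowski-type inequality that reduces after two squarings to $(k_1\hat{\gamma}_2-k_2\hat{\gamma}_1)^2\sin^4\phi\geq 0$---is exactly the kind of direct verification one would expect, and your observation that the admissible set is a closed half-line (so the infimum is attained and $P_{k_i}^{(i)}\geq Q^{(i)}$ holds without passing to a limit) is a genuinely useful preliminary step.
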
 The proof is similar to \cite[Lemma~4.4]{bao2023symmetrized2D},  we omit here for brevity.

\subsection{Proof of the local energy estimate}

\begin{proof}
    Applying the definitions of $\hat{\boldsymbol{G}}_k(\theta)$ in \eqref{surface energy matrix}, noting that \begin{equation}
        \left(\begin{array}{cc}
        \sin^2\theta & -\cos\theta\sin\theta\\
         -\cos\theta\sin\theta & \cos^2\theta
        \end{array}\right)=\left(\begin{array}{c}
             -\sin\theta  \\
             \cos\theta
        \end{array}\right)(-\sin\theta,\cos\theta)=\boldsymbol{n}\boldsymbol{n}^T,
    \end{equation} then we have \begin{equation}
        \begin{aligned}
            \frac{1}{|\boldsymbol{h}|}\left(\hat{\boldsymbol{G}}_k(\theta)\hat{\boldsymbol{h}}\right)\cdot\hat{\boldsymbol{h}}&=\frac{1}{|\boldsymbol{h}|}\left[\hat{\gamma}(\theta)I_2+\hat{\gamma}^{\prime}(\theta)J+k(\theta)\boldsymbol{n}\boldsymbol{n}^T\right]\hat{\boldsymbol{h}}\cdot\hat{\boldsymbol{h}}\\
            &=\frac{1}{|\boldsymbol{h}|}\hat{\gamma}(\theta)|\hat{\boldsymbol{h}}|^2+\frac{1}{|\boldsymbol{h}|}\hat{\gamma}^\prime(\theta)J\hat{\boldsymbol{h}}\cdot\hat{\boldsymbol{h}}+\frac{1}{|\boldsymbol{h}|}k(\theta)(\boldsymbol{n}\cdot\hat{\boldsymbol{h}})^2\\
            &=\frac{1}{|\boldsymbol{h}|}\hat{\gamma}(\theta)|\hat{\boldsymbol{h}}|^2+\frac{1}{|\boldsymbol{h}|}k(\theta)(\boldsymbol{n}\cdot\hat{\boldsymbol{h}})^2\\
            &=\frac{1}{|\boldsymbol{h}|}\left[\hat{\gamma}(\theta)+k(\theta)\sin^2(\theta-\hat{\theta})\right]|\hat{\boldsymbol{h}}|^2,
        \end{aligned}
    \end{equation} and \begin{equation}
        \begin{aligned}
            \frac{1}{|\boldsymbol{h}|}\left(\hat{\boldsymbol{G}}_k(\theta)\hat{\boldsymbol{h}}\right)\cdot\boldsymbol{h}&=\frac{1}{|\boldsymbol{h}|}\left[\hat{\gamma}(\theta)I_2+\hat{\gamma}^{\prime}(\theta)J+k(\theta)\boldsymbol{n}\boldsymbol{n}^T\right]\hat{\boldsymbol{h}}\cdot\boldsymbol{h}\\
            &=\frac{1}{|\boldsymbol{h}|}\hat{\gamma}(\theta)(\boldsymbol{h}\cdot\hat{\boldsymbol{h}})+\frac{1}{|\boldsymbol{h}|}\hat{\gamma}^\prime(\theta)J\hat{\boldsymbol{h}}\cdot\boldsymbol{h}+\frac{1}{|\boldsymbol{h}|}k(\theta)\boldsymbol{n}\boldsymbol{n}^T\hat{\boldsymbol{h}}\cdot\boldsymbol{h}\\
            &=|\hat{\boldsymbol{h}}|\hat{\gamma}(\theta)\left(\begin{array}{c}
                 \cos\theta  \\
                 \sin\theta
            \end{array}\right)\cdot\left(\begin{array}{c}
                 \cos\hat{\theta}  \\
                 \sin\hat{\theta}
            \end{array}\right)+|\hat{\boldsymbol{h}}|\hat{\gamma}^\prime(\theta)\left(\begin{array}{cc}
              0   & -1 \\
               1  &  0
            \end{array}\right)\left(\begin{array}{c}
                 \cos\hat{\theta}  \\
                 \sin\hat{\theta}
            \end{array}\right)\cdot\left(\begin{array}{c}
                 \cos\theta  \\
                 \sin\theta
            \end{array}\right)\\
            &\quad+|\hat{\boldsymbol{h}}|k(\theta)\left(\begin{array}{cc}
                \sin^2\theta & -\cos\theta\sin\theta \\
                -\cos\theta\sin\theta & \cos^2\theta
            \end{array}\right)\left(\begin{array}{c}
                 \cos\hat{\theta}  \\
                 \sin\hat{\theta}
            \end{array}\right)\cdot\left(\begin{array}{c}
                 \cos\theta  \\
                 \sin\theta
            \end{array}\right)\\
            &=|\hat{\boldsymbol{h}}|\hat{\gamma}(\theta)\cos(\theta-\hat{\theta})+|\hat{\boldsymbol{h}}|\hat{\gamma}^\prime(\theta)(-\sin\hat{\theta},\cos\hat{\theta})\cdot(\cos\theta,\sin\theta)\\
            &=|\hat{\boldsymbol{h}}|\left[\hat{\gamma}(\theta)\cos(\theta-\hat{\theta})+\hat{\gamma}^\prime(\theta)\sin(\theta-\hat{\theta})\right].
        \end{aligned}
    \end{equation} Recall the definitions of $P_{\alpha}(\phi,\theta),Q(\phi,\theta)$ in \eqref{eqn:auxiliary functions}, we have \begin{subequations}
        \begin{align}
            \frac{1}{|\boldsymbol{h}|}\left(\hat{\boldsymbol{G}}_k(\theta)\hat{\boldsymbol{h}}\right)\cdot\hat{\boldsymbol{h}}&=\frac{|\hat{\boldsymbol{h}}|^2}{4|\boldsymbol{h}|\hat{\gamma}(\theta)}P_{k(\theta)}^2(\phi,\theta),\label{local estimate 1}\\
            \frac{1}{|\boldsymbol{h}|}\left(\hat{\boldsymbol{G}}_k(\theta)\hat{\boldsymbol{h}}\right)\cdot\boldsymbol{h}&=|\hat{\boldsymbol{h}}|(Q(\phi,\theta)-\hat{\gamma}(\theta-\phi))\label{local estimate 2},
        \end{align}
    \end{subequations} with $\phi=\theta-\hat{\theta}$.

    Substituting \eqref{local estimate 1}, \eqref{local estimate 2} into the local energy estimate \eqref{eqn:local energy estimate}, we have \begin{equation}
        \begin{aligned}
            \frac{1}{|\boldsymbol{h}|}&\left(\hat{\boldsymbol{G}}_k(\theta)\hat{\boldsymbol{h}}\right)\cdot(\hat{\boldsymbol{h}}-\boldsymbol{h})-\left(|\hat{\boldsymbol{h}}|\hat{\gamma}(\hat{\theta})-|\boldsymbol{h}|\hat{\gamma}(\theta)\right)\\
            &=\frac{|\hat{\boldsymbol{h}}|^2}{4|\boldsymbol{h}|\hat{\gamma}(\theta)}P_{k(\theta)}^2(\phi,\theta)-|\hat{\boldsymbol{h}}|(Q(\phi,\theta)-\hat{\gamma}(\theta-\phi))\\
            &\quad-|\hat{\boldsymbol{h}}|\hat{\gamma}(\theta-\phi)+|\boldsymbol{h}|\hat{\gamma}(\theta)\\
            &=\frac{1}{4|\boldsymbol{h}|\hat{\gamma}(\theta)}\left[|\hat{\boldsymbol{h}}|^2P_{k(\theta)}^2(\phi,\theta)-4|\hat{\boldsymbol{h}}||\boldsymbol{h}|\hat{\gamma}(\theta)Q(\phi,\theta)+4|\boldsymbol{h}|^2\hat{\gamma}^2(\theta)\right],
        \end{aligned}
    \end{equation} thus the local energy estimate \eqref{eqn:local energy estimate} is equivalent to \begin{equation}
        |\hat{\boldsymbol{h}}|^2P_{k(\theta)}^2(\phi,\theta)-4|\hat{\boldsymbol{h}}||\boldsymbol{h}|\hat{\gamma}(\theta)Q(\phi,\theta)+4|\boldsymbol{h}|^2\hat{\gamma}^2(\theta)\geq 0.
    \end{equation} i.e. \begin{equation}\label{estimate finial equation}
        \left(|\hat{\boldsymbol{h}}|P_{k(\theta)}-2|\boldsymbol{h}|\hat{\gamma}(\theta)\right)^2+4|\hat{\boldsymbol{h}}||\boldsymbol{h}|\hat{\gamma}(\theta)\left(P_{k(\theta)}(\phi,\theta)-Q(\phi,\theta)\right)\geq 0
    \end{equation}
    By the definition \eqref{eqn:def minimal stabilizing function} of $k_0(\theta)$, we have $P_{k(\theta)}(\phi,\theta)-Q(\phi,\theta)\geq 0,\,\,\forall\phi\in2\pi\mathbb{T}$ for any $k(\theta)\geq k_0(\theta)$, thus \eqref{estimate finial equation} holds true. By Theorem~\ref{thm:existence of k0}, the minimal stabilizing function $k_0(\theta)<+\infty$ is well-defined, therefore we can choose sufficiently large $k(\theta)\geq k_0(\theta)$ such that the intended local energy estimate \eqref{eqn:local energy estimate} is validated.
\end{proof}

\subsection{Proof of the main result}

Leveraging the local energy estimate \eqref{eqn:local energy estimate} as outlined in Lemma~\ref{lemma:local energy estimate}, we can now prove the unconditional energy stability aspect of the main result, as stated in Theorem~\ref{MAIN RESULT}.

\begin{proof}
    Suppose $k(\theta)$ is sufficiently large such that $k(\theta)\geq k_0(\theta)$. We take $\boldsymbol{h}=\boldsymbol{h}^m_j,\hat{\boldsymbol{h}}=\boldsymbol{h}^{m+1}_j$ in the local energy estimate \eqref{eqn:local energy estimate}: \begin{equation}
        \frac{1}{|\boldsymbol{h}^m_j|}\left(\hat{\boldsymbol{G}}_k(\theta_j^m)\boldsymbol{h}_j^{m+1}\right)\cdot(\boldsymbol{h}_j^{m+1}-\boldsymbol{h}_j^m)\geq |\boldsymbol{h}_j^{m+1}|\hat{\gamma}(\theta_j^{m+1})-|\boldsymbol{h}_j^m|\hat{\gamma}(\theta_j^m).
    \end{equation} Therefore, for any $m\geq 0$, \begin{equation}\label{eqn:energy difference}
        \begin{aligned}
            \left(\hat{\boldsymbol{G}}_k(\theta^m)\partial_s\boldsymbol{X}^{m+1},\partial_s(\boldsymbol{X}^{m+1}-\boldsymbol{X}^m)\right)_{\Gamma^m}^h&=\sum_{j=1}^N \left[|\boldsymbol{h}_j^m|\left(\hat{\boldsymbol{G}}_k(\theta_j^m)\frac{\boldsymbol{h}_j^{m+1}}{|\boldsymbol{h}_j^m|}\right)\cdot\frac{\boldsymbol{h}_j^{m+1}-\boldsymbol{h}_j^m}{|\boldsymbol{h}_j^m|}\right]\\
            &=\sum_{j=1}^N\left[\frac{1}{|\boldsymbol{h}^m_j|}\left(\hat{\boldsymbol{G}}_k(\theta_j^m)\boldsymbol{h}_j^{m+1}\right)\cdot(\boldsymbol{h}_j^{m+1}-\boldsymbol{h}_j^m)\right]\\
            &\geq\sum_{j=1}^N\left[|\boldsymbol{h}_j^{m+1}|\hat{\gamma}(\theta_j^{m+1})-|\boldsymbol{h}_j^m|\hat{\gamma}(\theta_j^m)\right]\\
            &=\sum_{j=1}^N|\boldsymbol{h}_j^{m+1}|\hat{\gamma}(\theta_j^{m+1})-\sum_{j=1}^N|\boldsymbol{h}_j^m|\hat{\gamma}(\theta_j^m)\\
            &=W_c^{m+1}-W_c^m.
        \end{aligned}
    \end{equation}
    
    Taking $\varphi^h=\mu^{m+1},\boldsymbol{\omega}^h=\boldsymbol{X}^{m+1}-\boldsymbol{X}^m$ in \eqref{eqn:sp-pfem surface diffusion}, we have \begin{equation}
        \begin{aligned}
            \Bigl(\hat{\boldsymbol{G}}_k(\theta^m)\partial_s\boldsymbol{X}^{m+1},\partial_s(\boldsymbol{X}^{m+1}-\boldsymbol{X}^m)\Bigr)^h_{\Gamma^m}&=\Bigl(\mu^{m+1}\boldsymbol{n}^{m+\frac{1}{2}},\boldsymbol{X}^{m+1}-\boldsymbol{X}^m\Bigr)^h_{\Gamma^m}\\
            &=-\tau\Bigl(\partial_s\mu^{m+1},\partial_s\mu^{m+1}\Bigr)^h_{\Gamma^m}.
        \end{aligned}
    \end{equation} Combining this with \eqref{eqn:energy difference} yields that \begin{equation}
        \begin{aligned}
            W^{m+1}_c-W^m_c&\leq \left(\hat{\boldsymbol{G}}_k(\theta^m)\partial_s\boldsymbol{X}^{m+1},\partial_s(\boldsymbol{X}^{m+1}-\boldsymbol{X}^m)\right)_{\Gamma^m}^h\\
            &\leq-\tau\left(\partial_s\mu^{m+1},\partial_s\mu^{m+1}\right)_{\Gamma^m}^h\\
            &\leq 0,\qquad\forall m\geq 0, 
        \end{aligned}
    \end{equation} which immediately implies the unconditional energy stability in \eqref{thm:main results}.
\end{proof}

\section{Existence of the minimal stabilizing function}\label{section: existence of k0}

In this section, we present a proof of the existence of the minimal stabilizing function corresponding to $\hat{\gamma}(\theta)$ that satisfies the energy stable condition in \eqref{es condition on gamma}. 

Recall the definition \eqref{eqn:auxiliary functions} of $P_\alpha,Q$, denote \begin{equation}\label{eqn:F=P^2-Q^2}
        \begin{aligned}
            F_\alpha(\phi,\theta)&\coloneqq P_\alpha^2(\phi,\theta)-Q^2(\phi,\theta)\\
            &=4\hat{\gamma}(\theta)\left(\hat{\gamma}(\theta)+\alpha\sin^2\phi\right)\\
            &\quad-\left(\hat{\gamma}(\theta-\phi)+\hat{\gamma}(\theta)\cos\phi+\hat{\gamma}^\prime(\theta)\sin\phi\right)^2.
        \end{aligned}
\end{equation} Suppose $C>0$ is a positive number such that $\frac{1}{C}\leq\hat{\gamma}(\theta)\leq C$ and $|\hat{\gamma}^\prime(\theta)|,|\hat{\gamma}^{\prime\prime}(\theta)|\leq C,\,\,\forall\theta\in2\pi\mathbb{T}$. 

Before formally commencing our proof, we first need the following two technical lemmas:

\begin{lem}\label{lemma:phi=0}
    For a globally $C^1$ and piecewise $C^2$ anisotropy $\hat{\gamma}(\theta)$, there exists an
    open neighborhood $U_0$ of $0$ and a positive constant $k_{U_0}<+\infty$ such that for any $\alpha> k_{U_0}$, we have  $F_\alpha(\phi,\theta)\geq 0,\forall\phi\in U_{0}$.
\end{lem}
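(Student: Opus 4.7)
The plan is to exploit that $F_\alpha(\phi,\theta)$ vanishes at $\phi=0$ for every $\theta$ and behaves like a quadratic in $\phi$ near the origin, with only one piece scaling in $\alpha$. Since $P_\alpha(0,\theta) = Q(0,\theta) = 2\hat{\gamma}(\theta)$, we have $F_\alpha(0,\theta) = 0$, and the natural split is
\begin{equation*}
F_\alpha(\phi,\theta) = 4\alpha\,\hat{\gamma}(\theta)\sin^2\phi + \bigl[4\hat{\gamma}^2(\theta) - Q^2(\phi,\theta)\bigr].
\end{equation*}
The first summand is an $\alpha$-dependent stabilizer, and the second is an $\alpha$-independent remainder. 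I would show that the remainder is $O(\phi^2)$ uniformly in $\theta$ while the stabilizer is bounded below by a positive multiple of $\alpha\phi^2$; together this forces $F_\alpha \geq 0$ on a small neighborhood $U_0$ of $0$ once $\alpha$ exceeds a threshold $k_{U_0}$ depending only on uniform bounds for $\hat{\gamma}$ and its derivatives.

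To estimate the remainder, I factor
\begin{equation*}
4\hat{\gamma}^2(\theta) - Q^2(\phi,\theta) = D(\phi,\theta)\bigl[4\hat{\gamma}(\theta) - D(\phi,\theta)\bigr], \qquad D(\phi,\theta) \coloneqq 2\hat{\gamma}(\theta) - Q(\phi,\theta),
\end{equation*}
and rearrange to get
\begin{equation*}
D(\phi,\theta) = \bigl[\hat{\gamma}(\theta) - \hat{\gamma}(\theta-\phi)\bigr] + \hat{\gamma}(\theta)(1-\cos\phi) - \hat{\gamma}'(\theta)\sin\phi.
\end{equation*}
Using the integral identity $\hat{\gamma}(\theta) - \hat{\gamma}(\theta-\phi) = \phi\,\hat{\gamma}'(\theta) + \int_0^\phi[\hat{\gamma}'(\theta) - \hat{\gamma}'(\theta-v)]\,\mathrm{d}v$ together with the elementary remainders $1-\cos\phi = O(\phi^2)$ and $\sin\phi - \phi = O(\phi^3)$, the linear $\phi\hat{\gamma}'(\theta)$ contributions cancel. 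Since $\hat{\gamma}'$ is globally Lipschitz on $2\pi\mathbb{T}$, with constant inherited from global $C^1$ regularity together with the essential boundedness of $\hat{\gamma}''$ on its pieces, the integral term is bounded by $C|\phi|^2/2$. Hence $|D(\phi,\theta)| \leq C_1\phi^2$ uniformly in $\theta$ for $|\phi|$ bounded, and consequently $|4\hat{\gamma}^2 - Q^2| \leq C_2\phi^2$ with $C_2$ depending only on uniform bounds for $\hat{\gamma},\hat{\gamma}',\hat{\gamma}''$.

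For the stabilizer, the strict positivity and continuity of $\hat{\gamma}$ on the compact circle give a uniform lower bound $\hat{\gamma}(\theta) \geq 1/C$, and the elementary bound $\sin^2\phi \geq 4\phi^2/\pi^2$ for $|\phi| \leq \pi/2$ yields $4\alpha\hat{\gamma}(\theta)\sin^2\phi \geq (16\alpha/(C\pi^2))\phi^2$. Taking $U_0 \coloneqq (-\phi_0,\phi_0)$ for some fixed $\phi_0$ small enough that the remainder estimate is in force, and $k_{U_0} \coloneqq C_2C\pi^2/16$, we obtain
\begin{equation*}
F_\alpha(\phi,\theta) \geq \bigl[16\alpha/(C\pi^2) - C_2\bigr]\phi^2 \geq 0
\end{equation*}
for every $\alpha > k_{U_0}$ and every $(\phi,\theta) \in U_0 \times 2\pi\mathbb{T}$. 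The main technical obstacle I anticipate is that $\hat{\gamma}$ is only piecewise $C^2$, which rules out a direct second-order Taylor expansion of $\hat{\gamma}(\theta-\phi)$; the integral-form argument above is designed precisely to sidestep this by invoking only the global Lipschitz property of $\hat{\gamma}'$ implied by the assumption.
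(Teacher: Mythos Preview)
Your proposal is correct and follows the same overall strategy as the paper: split $F_\alpha$ into the $\alpha$-dependent stabilizer $4\alpha\hat{\gamma}(\theta)\sin^2\phi$ and the $\alpha$-independent remainder $4\hat{\gamma}^2(\theta)-Q^2(\phi,\theta)$, show the remainder is $O(\phi^2)$ uniformly in $\theta$, bound the stabilizer below by $(16\alpha/(C\pi^2))\phi^2$, and conclude. The factorization through $D=2\hat{\gamma}-Q$ and the final quantitative threshold are the same in spirit.

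The one genuine methodological difference is how you control $D(\phi,\theta)$ under merely piecewise $C^2$ regularity. The paper fixes $\theta$, picks $\varepsilon>0$ small enough that $\hat{\gamma}$ is $C^2$ on $[\theta-\varepsilon,\theta]$ and $[\theta,\theta+\varepsilon]$, and applies the second-order Taylor expansion $\hat{\gamma}(\theta-\phi)=\hat{\gamma}(\theta)-\hat{\gamma}'(\theta)\phi+\tfrac{1}{2}\hat{\gamma}''(\xi)\phi^2$ on each side; this makes the neighborhood $U_0$ depend on $\theta$ (the distance to the nearest non-$C^2$ point), though the constant $k_{U_0}$ they extract is uniform. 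Your integral-form argument sidesteps the pointwise second-order expansion entirely by using only that $\hat{\gamma}'$ is globally Lipschitz (a consequence of global $C^1$ together with essential boundedness of $\hat{\gamma}''$), which yields both a uniform $U_0$ and a uniform $k_{U_0}$ in one stroke. This is a cleaner treatment of the regularity hypothesis, and it is exactly the obstacle you anticipated.

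One small correction: your integral identity has a sign slip; it should read
\[
\hat{\gamma}(\theta)-\hat{\gamma}(\theta-\phi)=\phi\,\hat{\gamma}'(\theta)-\int_0^\phi\bigl[\hat{\gamma}'(\theta)-\hat{\gamma}'(\theta-v)\bigr]\,\mathrm{d}v,
\]
but since you only use the absolute value of the integral, the subsequent bound $|D|\leq C_1\phi^2$ is unaffected.
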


\begin{proof}
     Since $\hat{\gamma}(\theta)$ is globally $C^1$ and piecewise $C^2$, there exists a $0<\varepsilon<\frac{\pi}{2}$ such that $\hat{\gamma}(\theta)$ is $2$-times continuously differentiable on both $[\theta-\varepsilon,\theta]$ and $[\theta,\theta+\varepsilon]$. 
     
     Applying the mean value theorem to $[\theta-\varepsilon,\theta]$ and $[\theta,\theta+\varepsilon]$, we deduce that for $\phi\in U_0 \coloneqq\left\{\phi\mid |\phi|<\varepsilon\right\}$, there exists a $\xi$ between $\theta$ and $\theta-\phi$ such that \begin{equation}\label{eqn taylor gamma}
         \hat{\gamma}(\theta-\phi)=\hat{\gamma}(\theta)-\hat{\gamma}^\prime(\theta)\phi+\frac{\hat{\gamma}^{\prime\prime}(\xi)}{2}\phi^2.
     \end{equation}
     Again, by the mean value theorem, there exist $\xi_1,\xi_2$ between $0,\phi$ such that \begin{subequations}\label{eqn:the taylor expansions}
         \begin{align}
             \cos\phi&=1-\frac{1}{2}\phi^2+\frac{\sin\xi_1}{6}\phi^3\\
             \sin\phi&=\phi-\frac{\cos\xi_2}{6}\phi^3.
         \end{align}
     \end{subequations} Substituting \eqref{eqn taylor gamma} and \eqref{eqn:the taylor expansions} into $2\hat{\gamma}(\theta)-Q(\phi,\theta)$, we have \begin{equation}
         \begin{aligned}
             2\hat{\gamma}(\theta)-Q(\phi,\theta)&=2\hat{\gamma}(\theta)-\hat{\gamma}(\theta)+\hat{\gamma}^\prime(\theta)\phi-\frac{1}{2}\hat{\gamma}^{\prime\prime}(\xi)\phi^2\\
             &\quad-\hat{\gamma}(\theta)+\frac{1}{2}\hat{\gamma}(\theta)\phi^2-\frac{1}{6}\hat{\gamma}(\theta)\sin\xi_1\phi^3\\
             &\quad-\hat{\gamma}^\prime(\theta)\phi+\frac{1}{6}\hat{\gamma}^\prime(\theta)\cos\xi_2\phi^3\\
             &=\frac{1}{2}\left(\hat{\gamma}(\theta)-\hat{\gamma}^{\prime\prime}(\xi)\right)\phi^2-\frac{1}{6}\left(\hat{\gamma}(\theta)\sin\xi_1-\hat{\gamma}^\prime(\theta)\cos\xi_2\right)\phi^3.
         \end{aligned}
     \end{equation} Thus, \begin{equation}
         \begin{aligned}
             |2\hat{\gamma}(\theta)-Q(\phi,\theta)|&\leq\frac{1}{2}\left(|\hat{\gamma}(\theta)|+|\hat{\gamma}^{\prime\prime}(\xi)|\right)|\phi|^2 \\
             &\quad+\frac{1}{6}\left(|\hat{\gamma}(\theta)\sin\xi_1|+|\hat{\gamma}^\prime(\theta)\cos\xi_2|\right)|\phi|^3\\
             &\leq C|\phi|^2+\frac{C}{3}|\phi|^3.
         \end{aligned}
     \end{equation} 
     Since $|2\hat{\gamma}(\theta)+Q(\phi,\theta)|\leq 2|\hat{\gamma}(\theta)|+|Q(\phi,\theta)|\leq 5C$, then for any $\phi\in U_0$, \begin{equation}
         \begin{aligned}
             |4\hat{\gamma}^2(\theta)-Q^2(\phi,\theta)|&\leq|2\hat{\gamma}(\theta)+Q(\phi,\theta)||2\hat{\gamma}(\theta)-Q(\phi,\theta)|\\
             &\leq 5C\left(C|\phi|^2+\frac{C}{3}|\phi|^3\right)\\
             &=\frac{5C^2}{3}\left(3+|\phi|\right)|\phi|^2\\
             &\leq \frac{5(6+\pi)C^2}{6}|\phi|^2.
         \end{aligned}
     \end{equation} 

     Noting that for any $\phi\in U_0$, we have $|\sin\phi|>\frac{2}{\pi}|\phi|$, therefore, \begin{equation}
         \begin{aligned}
             F_\alpha(\phi,\theta)&=4\hat{\gamma}(\theta)\alpha\sin^2\phi+4\hat{\gamma}^2(\theta)-Q^2(\phi,\theta)\\
             &\geq \frac{16}{C\pi^2}\alpha |\phi|^2-\frac{5(6+\pi)C^2}{6}|\phi|^2\\
             &\geq\left[\frac{16}{C\pi^2}\alpha-\frac{5(6+\pi)C^2}{6}\right]|\phi|^2,\,\,\forall\phi\in U_{0}.
         \end{aligned}
     \end{equation} Thus there exists a positive constant $k_{U_0}\coloneqq\frac{5\pi^2(6+\pi)C^3}{96}$, for $\alpha>k_{U_0}$, $F_{\alpha}(\phi,\theta)\geq 0,\forall\phi\in U_0$.
\end{proof}

\begin{lem}\label{lemma:phi=pi}
    Suppose $\hat{\gamma}(\theta)$ satisfying \eqref{es condition on gamma}. There exists an open neighborhood $U_{\pi}$ of $\pi$ with a positive $k_{U_\pi}<+\infty$ such that for any $\alpha> k_{U_\pi}$, $F_\alpha(\phi,\theta)\geq 0,\forall\phi\in U_{\pi}$.
\end{lem}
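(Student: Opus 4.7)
My strategy mirrors that of Lemma~\ref{lemma:phi=0}, but the analysis is more delicate because condition (i) of \eqref{es condition on gamma} only guarantees non-negativity (rather than strict positivity) of the leading Taylor coefficient at $\phi=\pi$. I would set $\phi=\pi+\psi$ with small $\psi$, abbreviate $a=\hat{\gamma}(\theta)$, $b=\hat{\gamma}(\theta-\pi)$, $a'=\hat{\gamma}'(\theta)$, $b'=\hat{\gamma}'(\theta-\pi)$, and Taylor-expand using $\cos(\pi+\psi)=-\cos\psi$, $\sin(\pi+\psi)=-\sin\psi$ together with a second-order expansion of $\hat{\gamma}(\theta-\pi-\psi)$ at $\psi=0$ (permitted by the piecewise $C^2$ regularity). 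A direct computation should yield
\[
F_\alpha(\pi+\psi,\theta)=(a+b)(3a-b)+2(b-a)(a'+b')\,\psi+\bigl[4a\alpha-(a'+b')^2-(b-a)(a+\hat{\gamma}''(\eta))\bigr]\psi^2+O(|\psi|^3),
\]
where $\eta$ lies between $\theta-\pi$ and $\theta-\pi-\psi$, and the $O(|\psi|^3)$ remainder is controlled uniformly in $\theta$ by the global bound $C$ on $\hat{\gamma}$ and its derivatives.

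At $\psi=0$ the constant term equals $(a+b)(3a-b)\geq 0$ by condition (i), confirming the inequality at the center of $U_\pi$. In the non-degenerate situation $3a-b>0$, the strictly positive constant absorbs both the linear and quadratic corrections on a small $\psi$-window, so no stabilization is needed there. The real work is the critical case $3\hat{\gamma}(\theta^*)=\hat{\gamma}(\theta^*-\pi)$. There, condition (ii) of \eqref{es condition on gamma} gives $a'=\hat{\gamma}'(\theta^*)=0$. A second, automatic vanishing is the key observation: the $C^1$ function $g(\theta)\coloneqq 3\hat{\gamma}(\theta)-\hat{\gamma}(\theta-\pi)$ is non-negative by (i) and attains its minimum $0$ at the interior point $\theta^*\in 2\pi\mathbb{T}$, so $g'(\theta^*)=3\hat{\gamma}'(\theta^*)-\hat{\gamma}'(\theta^*-\pi)=0$, which combined with $\hat{\gamma}'(\theta^*)=0$ forces $b'=\hat{\gamma}'(\theta^*-\pi)=0$. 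Thus at $\theta^*$ both the constant and linear coefficients in $\psi$ vanish, leaving $F_\alpha(\pi+\psi,\theta^*)=2a\bigl[2\alpha-(a+\hat{\gamma}''(\eta))\bigr]\psi^2+O(|\psi|^3)$, which is non-negative on a fixed $\psi$-window whenever $\alpha$ exceeds a threshold controlled by $C$.

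To merge the two regimes into a single $U_\pi$ and $k_{U_\pi}$ uniform in $\theta$, I would view the second-order truncation as a quadratic in $\psi$ and require its discriminant to be non-positive, i.e.\ $(b-a)^2(a'+b')^2\leq(a+b)(3a-b)\bigl[4a\alpha-(a'+b')^2-(b-a)(a+\hat{\gamma}''(\eta))\bigr]$. The decisive estimate is $(a'+b')^2\leq K\,(3a-b)$ for a constant $K$ depending only on $C$ and $\hat{\gamma}$. Near any critical $\theta^*$ this follows from comparing the second-order Taylor expansions of $g$ and $h(\theta)\coloneqq\hat{\gamma}'(\theta)+\hat{\gamma}'(\theta-\pi)$ at $\theta^*$ (both vanish at $\theta^*$, while $g$ additionally does so to non-negative second order); away from the critical set $3a-b$ is bounded below, making the bound trivial. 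A compactness argument on $2\pi\mathbb{T}$ then produces a uniform $K$, after which the $O(|\psi|^3)$ remainder is absorbed by shrinking $U_\pi$, delivering the desired uniform $k_{U_\pi}$.

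The main obstacle I expect is precisely the uniform comparison $(a'+b')^2\leq K(3a-b)$ near the critical set: pointwise vanishing at each $\theta^*$ is strictly weaker than a uniform ratio bound, and one has to rule out the possibility that $g''(\theta^*)=0$ while $h'(\theta^*)\neq 0$. The piecewise $C^2$ assumption combined with both clauses of \eqref{es condition on gamma} should suffice, but a higher-order Taylor fall-back may well be required at degenerate critical points.
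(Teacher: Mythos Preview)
Your steps 1--4 are correct and mirror the paper's proof closely: the Taylor expansion, the observation that $\hat{\gamma}'(\theta^*-\pi)=0$ follows from $\theta^*$ being an extremum of $\hat{\gamma}(\theta-\pi)/\hat{\gamma}(\theta)$ (the paper uses the ratio rather than the difference $g$, but this is cosmetic), and the resulting quadratic control at a critical $\theta^*$ are exactly what the paper does.

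Where you diverge is in step 5, and the divergence stems from a misreading of the lemma. The statement is for a \emph{fixed} $\theta$: the minimal stabilizing function $k_0(\theta)$ in \eqref{eqn:def minimal stabilizing function} is defined pointwise, Theorem~\ref{thm:existence of k0} asserts only that $k_0(\theta)<\infty$ for each $\theta$, and the open-cover argument at the end of its proof runs over $\phi\in 2\pi\mathbb{T}$ with $\theta$ held fixed. Consequently the paper does not seek, and does not need, a neighborhood $U_\pi$ or threshold $k_{U_\pi}$ uniform in $\theta$. Its proof is simply a dichotomy on the given $\theta$: either $3\hat{\gamma}(\theta)>\hat{\gamma}(\theta-\pi)$, in which case $F_\alpha(\pi,\theta)=(3\hat{\gamma}(\theta)-\hat{\gamma}(\theta-\pi))(\hat{\gamma}(\theta)+\hat{\gamma}(\theta-\pi))>0$ and continuity in $\phi$ alone yields the neighborhood (any $\alpha\geq 0$ works); or equality holds, so $\theta$ itself is a critical $\theta^*$, and your quadratic analysis from step 4 finishes the job. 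Your entire discriminant and comparison programme $(a'+b')^2\leq K(3a-b)$, together with the obstacle you flag at the end, is therefore unnecessary --- you are attempting to prove a stronger uniform statement than the paper states or uses.
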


\begin{proof}
    \begin{enumerate}[(i)]
        \item If $3\hat{\gamma}(\theta)>\hat{\gamma}(\theta-\pi)$ for any $\theta\in 2\pi\mathbb{T}$, then \begin{equation}
            F_{\alpha}(\pi,\theta)=\left(3\hat{\gamma}(\theta)-\hat{\gamma}(\theta-\pi)\right)\left(\hat{\gamma}(\theta)+\hat{\gamma}(\theta-\pi)\right)>0.
        \end{equation} Thus, by the continuity of $F_\alpha(\phi,\theta)$, there exists an open neighborhood $U_\pi$ of $\pi$ such that $F_{\alpha}(\pi,\theta)\geq 0,\forall\phi\in U_{\pi}$.
        \item If $3\hat{\gamma}(\theta^*)=\hat{\gamma}(\theta^*-\pi)$ and $\hat{\gamma}^\prime(\theta^*)=0$ at $\theta^*\in2\pi\mathbb{T}$. Since function $\gamma(\theta-\pi)/\gamma(\theta)\leq 3$, it attends its maximum at $\theta^*$, thus \begin{equation}
            \begin{aligned}
                \left.\left(\frac{\hat{\gamma}(\theta-\pi)}{\hat{\gamma}(\theta)}\right)^\prime\right|_{\theta=\theta^*}&=\frac{\hat{\gamma}^\prime(\theta^*-\pi)\hat{\gamma}(\theta^*)-\hat{\gamma}(\theta^*-\pi)\hat{\gamma}^\prime(\theta^*)}{\hat{\gamma}^2(\theta^*)}\\
                &=\frac{\hat{\gamma}^\prime(\theta^*-\pi)-3\hat{\gamma}^\prime(\theta^*)}{\hat{\gamma}(\theta^*)}\\
                &=\frac{\hat{\gamma}^\prime(\theta^*-\pi)}{\hat{\gamma}(\theta^*)}=0,
            \end{aligned}
        \end{equation} i.e., $\hat{\gamma}^\prime(\theta^*-\pi)=0$. 
        
        Similar to derivations of \eqref{eqn taylor gamma}, according to the mean value theorem, we know that there exists positive number $0<\varepsilon<\frac{\pi}{2}$ and a neighborhood $U_\pi=\{\phi\mid|\phi-\pi|<\varepsilon\}$. Such that for any $\phi\in U_\pi$, there exists a $\xi$ between $\theta^*-\phi$ and $\theta^*-\pi$ satisfying \begin{equation}\label{eqn:taylor of gamma2}
            \begin{aligned}
                \hat{\gamma}(\theta^*-\phi)&=\hat{\gamma}(\theta^*-\pi)-\hat{\gamma}^\prime(\theta^*-\pi)(\phi-\pi)+\frac{\hat{\gamma}^{\prime\prime}(\xi)}{2}(\phi-\pi)^2\\
                &=\hat{\gamma}(\theta^*-\pi)+\frac{\hat{\gamma}^{\prime\prime}(\xi)}{2}(\phi-\pi)^2.
            \end{aligned}
        \end{equation}
    
    Again, by the mean value theorem, we know that there exists a $\xi_1$ between $\pi,\phi$, \begin{equation}\label{eqn:taylor of cos2}
            \cos\phi=-1-\frac{\cos\xi_1}{6}(\phi-\pi)^2.
        \end{equation}
    By substituting \eqref{eqn:taylor of gamma2} and \eqref{eqn:taylor of cos2}  into $2\hat{\gamma}(\theta^*)-Q(\phi,\theta^*)$, we have \begin{equation}
        \begin{aligned}
            2\hat{\gamma}(\theta^*)-Q(\phi,\theta^*)&=2\hat{\gamma}(\theta^*)-\hat{\gamma}^\prime(\theta^*-\pi)-\frac{\hat{\gamma}^{\prime\prime}(\xi)}{2}(\phi-\pi)^2\\
            &\quad +\hat{\gamma}(\theta^*)+\frac{\cos\xi_1}{6}\hat{\gamma}(\theta)(\phi-\pi)^2+\hat{\gamma}(\theta^*)\sin\phi\\
            &=\frac{1}{6}\left(\hat{\gamma}(\theta^*)\cos\xi_1-3\hat{\gamma}^{\prime\prime}(\xi)\right)(\phi-\pi)^2.
        \end{aligned}
    \end{equation} Thus, \begin{equation}
        \begin{aligned}
            |2\hat{\gamma}(\theta^*)-Q(\phi,\theta^*)|&\leq \frac{1}{6}\left(|\hat{\gamma}(\theta^*)\cos\xi_1|+3|\hat{\gamma}^{\prime\prime}(\xi)|\right)|\phi-\pi|^2\\
            &\leq \frac{2C}{3}|\phi-\pi|^2.
        \end{aligned}
    \end{equation}

    Given that $|2\hat{\gamma}(\theta^*)+Q(\phi,\theta^*)|\leq 5C$, we can deduce that \begin{equation}
        \begin{aligned}
            |4\hat{\gamma}^2(\theta^*)-Q^2(\phi,\theta^*)|&\leq |2\hat{\gamma}(\theta^*)+Q(\phi,\theta^*)||2\hat{\gamma}(\theta^*)-Q(\phi,\theta^*)|\\
            &\leq \frac{10C^2}{3}|\phi-\pi|^2.
        \end{aligned}
    \end{equation} Noting that for any $\phi\in U_{\pi}$, we have $|\sin\phi|>\frac{2}{\pi}|\phi-\pi|$ since $\varepsilon\in(0,\pi)$. Therefore, \begin{equation}
        \begin{aligned}
            F_\alpha(\phi,\theta^*)&=4\hat{\gamma}(\theta^*)\alpha\sin^2\phi+4\hat{\gamma}^2(\theta^*)-Q^2(\phi,\theta^*)\\
            &\geq \frac{16}{C\pi^2}\alpha|\phi-\pi|^2-\frac{10C^2}{3}|\phi-\pi|^2\\
            &\geq\left[\frac{16}{C\pi^2}\alpha-\frac{10C^2}{3}\right]|\phi-\pi|^2,\,\,\forall\phi\in U_{\pi}.
        \end{aligned}
    \end{equation} Therefore, there exists a positive constant $k_{U_\pi}\coloneqq\frac{5\pi^2C^3}{24}$, such that for $\alpha>k_{U_\pi}$, we have $F_{\alpha}(\phi,\theta)\geq 0,\forall\phi\in U_\pi$.
     \end{enumerate}
\end{proof}

With the help of above two lemmas, Theorem~\ref{thm:existence of k0} can now be proven:

\begin{proof} (\textit{Existence of the minimal stabilizing function})
    \begin{enumerate}[(i)]
        \item For any $\sin\phi_0\neq 0$, i.e. $\phi_0\neq 0,\pi$, there exists an open neighborhood $U_{\phi_0}$ of $\phi_0$ such that $\sin^2\phi$ has a strict positive lower bound in $U_{\phi_0}$, i.e. there exists a constant $c>0$ such that $\sin^2\phi\geq c>0,\forall \phi\in U_{\phi_0}$, then we have \begin{equation}
        \begin{aligned}
            F_\alpha(\phi,\theta)&=4\hat{\gamma}(\theta)\alpha\sin^2\phi+O(1)\\
            &\geq 4c\hat{\gamma}(\theta)\alpha+O(1).
        \end{aligned}
        \end{equation} Thus there positive exists a constant $k_{U_{\phi_0}}<+\infty$, for any $\alpha>k_{U_{\phi_0}}$, $F_\alpha(\phi,\theta)\geq 0,\forall\phi\in U_{\phi_0}$.\\
        \item For $\phi_0=0$, by Lemma~\ref{lemma:phi=0}, there exists an open neighborhood $U_{\phi_0}\coloneqq U_0$ of $0$ and a positive constant $k_{U_{\phi_0}}\coloneqq k_{U_0}<+\infty$ such that for any $\alpha>k_{U_{\phi_0}}$, $F_\alpha(\phi,\theta)\geq 0,\forall\phi\in U_{\phi_0}$.\\
        \item For $\phi_0=\pi$, by Lemma~\ref{lemma:phi=pi}, there also exists an open neighborhood $U_{\phi_0} \coloneqq U_\pi$ of $\pi$ with a positive constant $k_{U_{\phi_0}} \coloneqq k_{U_\pi}<+\infty$ such that for any $\alpha>k_{U_{\phi_0}}$, $F_\alpha(\phi,\theta)\geq 0,\forall\phi\in U_{\phi_0}$.
    \end{enumerate}

Since $2\pi\mathbb{T}$ is compact and $\{U_{\phi_0}:\phi_0\in 2\pi\mathbb{T}\}$
forms an open cover of $2\pi\mathbb{T}$, by the open cover theorem, one can select a finite subcover $\{U_i\}_{i=1}^{M}\subset\{U_{\phi_0}:\phi_0\in 2\pi\mathbb{T}\}$, then for $\displaystyle\alpha>\max_{1\leq i\leq M}k_{U_i}$, we have \begin{equation}
    F_\alpha(\phi,\theta)\geq 0,\qquad\forall\phi\in 2\pi\mathbb{T}.
\end{equation} Which implies $k_0(\theta)=\inf\left\{\alpha\geq 0\mid P_\alpha(\phi,\theta)-Q(\phi,\theta)\geq 0,\,\,\forall\phi\in2\pi\mathbb{T}\right\}<+\infty$.
\end{proof}

\section{Extension to solid-state dewetting}\label{section solid-state dewetting}

In this section, we extend the conservative formulation \eqref{eqn:surface diffusion reformulate conservative form} and its SPFEM \eqref{eqn:sp-pfem surface diffusion} for a closed curve under anisotropic surface diffusion to solid-state dewetting in materials science \cite{bao2017parametric,jiang2016solid,wang2015sharp}.

\subsection{Sharp interface model and a stabilized formulation}

\begin{figure}[t!]
\centering
\includegraphics[width=0.5\textwidth]{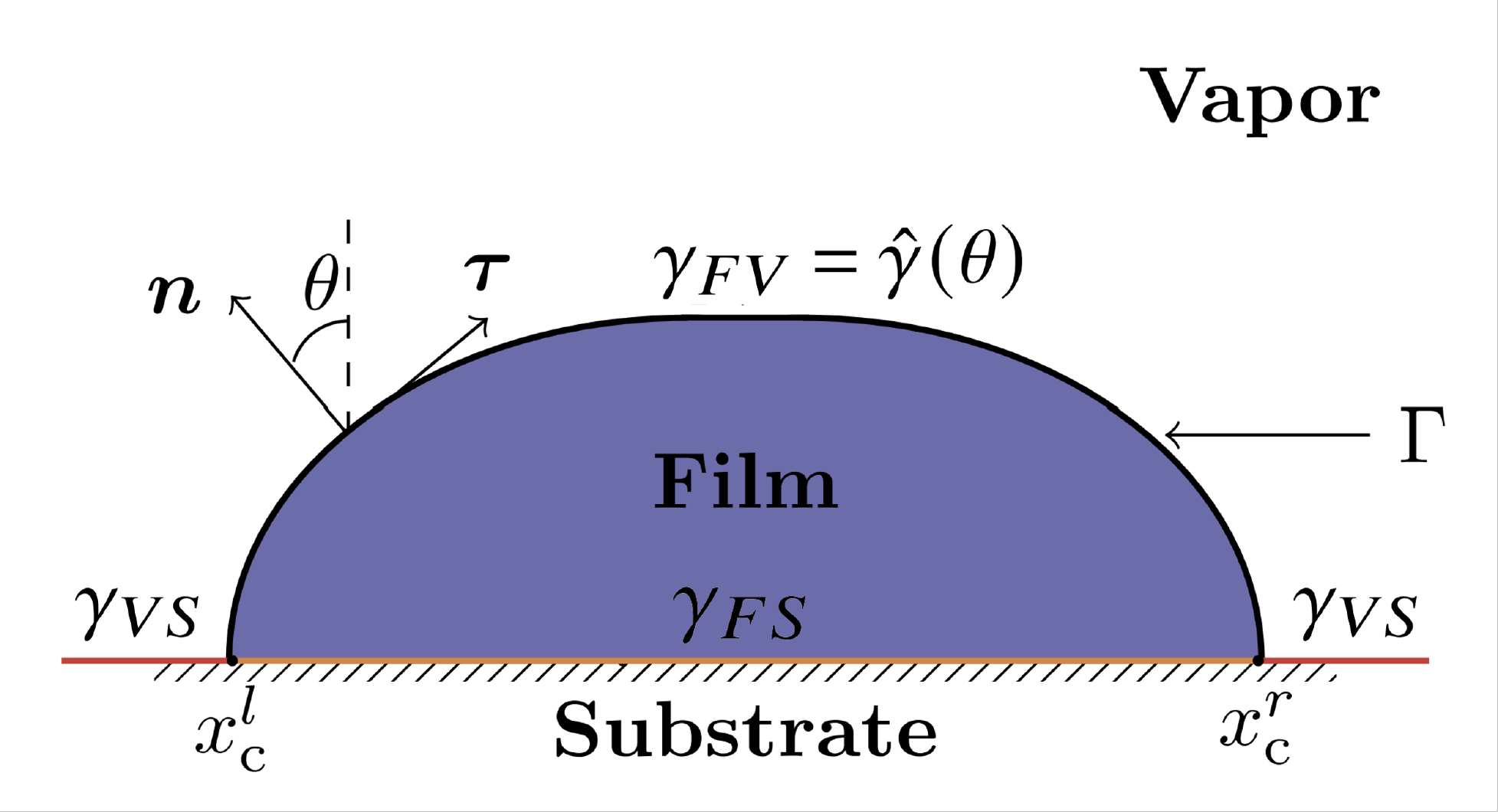}
\caption{An illustration of solid-state dewetting of a thin film on a flat rigid substrate in 2D, where $\gamma_{FV}=\hat{\gamma}(\theta),\gamma_{VS},\gamma_{FS}$ represent surface energy densities of film/vapor, vapor/substrate and film/substrate interface, respectively, $x_c^l,x_c^r$ are the left and right contact points.}
\label{fig: illustration of solid-state dewetting}
\end{figure}

As shown in Fig.~\ref{fig: illustration of solid-state dewetting}, the solid-state dewetting problem in 2D is described as evolution of an open curve $\Gamma(t)=\boldsymbol{X}(s,t)=(x(s,t),y(s,t))^T\,\,(0\leq s\leq L)$ under anisotropic surface diffusion and contact line migration. Here, $s$ and $t$ are the arc-length parameter and time, respectively, and $L\coloneqq L(t)=|\Gamma(t)|$ represents the total length of $\Gamma(t)$. As it was derived in the literature \cite{bao2017parametric,jiang2016solid,wang2015sharp}, a dimensionless sharp-interface model for simulating solid-state dewetting of thin films with weakly anisotropic surface energy can be formulated as: $\boldsymbol{X}(s,t)$ satisfying anisotropic surface diffusion \eqref{eqn:surface diffusion}--\eqref{eqn:weighted curvature} with following boundary conditions \begin{enumerate}
    \item contact point condition \begin{equation}\label{bdc contact points}
        y(0,t)=0,\qquad y(L,t)=0,\qquad t\geq 0;
    \end{equation} 
    \item relaxed contact angle condition \begin{equation}\label{bdc relaxed contact angle}
        \frac{\mathrm{d}x_c^l(t)}{\mathrm{d}t}=\eta f(\theta^l_d;\sigma),\qquad \frac{\mathrm{d}x_c^r(t)}{\mathrm{d}t}=-\eta f(\theta^r_d;\sigma),\qquad t\geq 0;  \end{equation}
    \item zero-mass flux condition \begin{equation}\label{bdc zero-mass flux}
        \partial_s\mu(0,t)=0,\qquad \partial_s\mu(L,t)=0,\qquad t\geq 0;
    \end{equation} 
\end{enumerate} where $x_c^l(t)=x(0,t)\leq x_c^r(t)=x(L,t)$ are the left and right contact points, $\theta_d^l\coloneqq\theta_d^l(t),\theta_d^r\coloneqq \theta_d^r(t)$ are the contact angles at each contact points, respectively. $f(\theta;\sigma)$ is defined as \begin{equation}
    f(\theta;\sigma)\coloneqq\hat{\gamma}(\theta)\cos\theta-\hat{\gamma}^\prime(\theta)\sin\theta-\sigma,\qquad\theta\in 2\pi\mathbb{T}.
\end{equation} with $\sigma\coloneqq\cos\theta_Y=\frac{\gamma_{VS}-\gamma_{FS}}{\gamma_{FV}}$ and $\theta_Y$ be the isotropic Young contact angle. $0<\eta<+\infty$ denotes the contact line mobility \cite{bao2017parametric,jiang2016solid,wang2015sharp}.

Similarly to \eqref{eqn:surface diffusion reformulate conservative form}, a stabilized formulation for solid-state dewetting can be derived as: \begin{subequations}\label{eqn:stab solid-state dewetting reformulate}
    \begin{align}
        &\boldsymbol{n}\cdot\partial_t\boldsymbol{X}-\partial_{ss}\mu=0,\qquad 0<s<L(t),\qquad t>0,\label{eqn:stab solid-state dewetting reformluate a}\\
        &\mu\boldsymbol{n}+\partial_s\Bigl(\hat{\boldsymbol{G}}_k(\theta)\partial_{s}\boldsymbol{X}\Bigr)=\boldsymbol{0},\label{eqn:stab solid-state dewetting reformluate b}
    \end{align}
\end{subequations} with boundary conditions \eqref{bdc contact points}--\eqref{bdc zero-mass flux}, where $\boldsymbol{\hat{G}}_k(\theta)$ is the stabilizing surface energy matrix given in \eqref{surface energy matrix}.

\subsection{A stabilized weak formulation}

Suppose $\Gamma(t)$ is parameterized by a time-independent variable $\rho$ over a fixed domain $\rho\in\mathbb{I}=[0,1]$, i.e. \begin{equation}
    \Gamma(t)\coloneqq\boldsymbol{X}(\rho,t)=(x(\rho,t),y(\rho,t))^T\colon\mathbb{I}\times\mathbb{R}^+\to\mathbb{R}^2,
\end{equation} thus the arc-length parameter $s$ can be given as $s=\int_0^\rho|\partial_q\boldsymbol{X}(q,t)|\,\mathrm{d}q$. (Again, we do not distinguish $\boldsymbol{X}(s,t)$ and $\boldsymbol{X}(\rho,t)$.)

Introducing the following functional spaces as \begin{equation}
    H^1_0(\mathbb{I})\coloneqq\left\{u\in H^1(\mathbb{I})\mid u(0)=u(1)=0\right\},\qquad\mathbb{X}\coloneqq H^1(\mathbb{I})\times H^1_0(\mathbb{I}).
\end{equation} Following similar derivations as in Section \ref{section sp-pfem}, we present the new variational formulation for \eqref{eqn:stab solid-state dewetting reformulate} with boundary conditions \eqref{bdc contact points}--\eqref{bdc zero-mass flux} as follows: Given an initial open curve $\Gamma(0)=\boldsymbol{X}(\cdot,0)=\boldsymbol{X}_0\in\mathbb{X}$, find the solution $(\boldsymbol{X}(\cdot,t),\mu(\cdot,t))\in\mathbb{X}\times H^1(\mathbb{I})$ such that \begin{subequations}\label{new variational formulation solid-state dewetting}
    \begin{align}
        \begin{split}
            &\Bigl(\boldsymbol{n}\cdot \partial_t\boldsymbol{X}, \varphi\Bigr)_{\Gamma(t)}+\Bigl(\partial_s\mu, \partial_s\varphi\Bigr)_{\Gamma(t)}=0,\qquad \forall\varphi\in H^1(\mathbb{I}),
        \end{split}\\
        \begin{split}
            &\Bigl(\mu\boldsymbol{n},\boldsymbol{\omega}\Bigr)_{\Gamma(t)}-\Bigl(\hat{\boldsymbol{G}}_k(\theta)\partial_s\boldsymbol{X},\partial_s\boldsymbol{\omega}\Bigr)_{\Gamma(t)}-\frac{1}{\eta}\left[\frac{\mathrm{d}x_c^l(t)}{\mathrm{d}t}\omega_1(0)+\frac{\mathrm{d}x_c^r(t)}{\mathrm{d}t}\omega_1(1)\right]\\
            &\qquad\qquad\,\,\, +\sigma\left[\omega_1(1)-\omega_1(0)\right]=0,\qquad\forall\boldsymbol{\omega}=(\omega_1,\omega_2)^T\in\mathbb{X}.
        \end{split}
    \end{align}
\end{subequations}

The area $A_o(t)$ enclosed by $\Gamma(t)$ and the substrate and the total energy $W_o(t)$ are defined as \begin{equation}
    A_o(t)\coloneqq \int_{\Gamma(t)}y(s,t)\partial_sx(s,t)\,\mathrm{d}s,\quad W_o(t)\coloneqq \int_{\Gamma(t)}\hat{\gamma}(\theta)\,\mathrm{d}s-\sigma\left(x_c^r(t)-x_c^l(t)\right).
\end{equation} 
Similar to the closed curve case, the weak formulation \eqref{new variational formulation solid-state dewetting} still preserves area conservation and energy dissipation.

\begin{prop}[area conservation and energy dissipation] Suppose $\Gamma(t)$ is given by the solution $(\boldsymbol{X}(\cdot, t), \mu(\cdot, t))$ of the weak formulation \eqref{new variational formulation solid-state dewetting}. Then, \begin{equation}\label{geo properties, cont, sdeweting}
    A_o(t)\equiv A_o(0),\qquad W_o(t)\leq W_o(t_1)\leq W_o(0), \qquad \forall t\geq t_1\geq 0.
\end{equation} More precisely, \begin{equation}\label{eqn:area conservation & energy disspation of solid-state dewetting}
    \frac{\mathrm{d}}{\mathrm{d}t}A_o(t)=0,\quad \frac{\mathrm{d}}{\mathrm{d}t}W_o(t)=-\int_{\Gamma(t)}|\partial_s\mu|^2\,\mathrm{d}s-\frac{1}{\eta}\left[\left(\frac{\mathrm{d}x_c^r}{\mathrm{d}t}\right)^2+\left(\frac{\mathrm{d}x_c^l}{\mathrm{d}t}\right)^2\right]\leq 0,\quad t\geq 0.
\end{equation}
\end{prop}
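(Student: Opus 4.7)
The proof will mirror the closed-curve Proposition \ref{prop weak form structure preserving}, with two new ingredients: careful treatment of the boundary contributions at the contact points $\rho = 0, 1$, and the exact cancellation of the $\sigma$-coupling in \eqref{new variational formulation solid-state dewetting} with the derivative of the substrate term $-\sigma(x_c^r - x_c^l)$ inside $W_o(t)$. The area identity will follow from testing with $\varphi \equiv 1$, and the dissipation identity from the coupled choice $\varphi = \mu$ together with $\boldsymbol{\omega} = \partial_t\boldsymbol{X}$.

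For area conservation, I first rewrite $A_o(t) = \int_0^1 y(\rho,t)\,\partial_\rho x(\rho,t)\,\mathrm{d}\rho$, differentiate under the integral sign, and integrate by parts in $\rho$. The boundary term $[y\,\partial_t x]_0^1$ vanishes thanks to the contact point conditions $y(0,t) = y(1,t) = 0$, and rewriting the resulting antisymmetric integrand as $(\boldsymbol{n}\cdot\partial_t\boldsymbol{X})\,|\partial_\rho\boldsymbol{X}|$ yields $\frac{\mathrm{d}}{\mathrm{d}t} A_o(t) = (\boldsymbol{n}\cdot\partial_t\boldsymbol{X},\,1)_{\Gamma(t)}$. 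Since the constant function $\varphi \equiv 1$ lies in $H^1(\mathbb{I})$ with no vanishing endpoint requirement, inserting it into the first equation of \eqref{new variational formulation solid-state dewetting} together with $\partial_s 1 \equiv 0$ immediately forces the right-hand side to be zero.

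For energy dissipation, I apply Lemma \ref{lemma transport theorem} to $f = \hat{\gamma}(\theta)$ and combine it with the pointwise identity $\partial_t \theta = -\partial_s(\partial_t\boldsymbol{X})\cdot\partial_s\boldsymbol{X}^\perp$ derived in Proposition \ref{prop weak form structure preserving}. Because that step uses no integration by parts along $\Gamma$, it transfers verbatim to the open-curve setting:
\begin{equation}
\frac{\mathrm{d}}{\mathrm{d}t} \int_{\Gamma(t)} \hat{\gamma}(\theta)\,\mathrm{d}s = \Bigl(\hat{\boldsymbol{G}}_k(\theta)\,\partial_s\boldsymbol{X},\; \partial_s(\partial_t\boldsymbol{X})\Bigr)_{\Gamma(t)},
\end{equation}
where the stabilizing contribution $k(\theta)\,\boldsymbol{n}\boldsymbol{n}^T\partial_s\boldsymbol{X}$ disappears thanks to $\boldsymbol{n}\cdot\partial_s\boldsymbol{X} \equiv 0$. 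I then test the first equation of \eqref{new variational formulation solid-state dewetting} with $\varphi = \mu$ and the second with $\boldsymbol{\omega} = \partial_t\boldsymbol{X}$; the latter is an admissible element of $\mathbb{X} = H^1(\mathbb{I}) \times H_0^1(\mathbb{I})$ precisely because $y(0,t) = y(1,t) = 0$ for all $t$ forces $\partial_t y$ to vanish at the two contact points. Using $\omega_1(0) = \mathrm{d}x_c^l/\mathrm{d}t$ and $\omega_1(1) = \mathrm{d}x_c^r/\mathrm{d}t$, eliminating the cross term $(\mu\boldsymbol{n},\partial_t\boldsymbol{X})_{\Gamma(t)}$ between the two tested equations, and finally subtracting $\sigma\,\mathrm{d}(x_c^r - x_c^l)/\mathrm{d}t$ to account for the substrate term in $W_o(t)$, the $\sigma$ boundary contributions cancel exactly and the identity in \eqref{eqn:area conservation & energy disspation of solid-state dewetting} falls out.

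The main point requiring care is the admissibility of $\partial_t\boldsymbol{X}$ as a test function and the bookkeeping of the four boundary terms at $\rho = 0, 1$; beyond that, every step is a direct adaptation of the closed-curve derivation. Conceptually, the $\sigma$-coupling built into the weak formulation is exactly what encodes the Young-type driving force for the moving contact lines at the variational level, and it is the clean cancellation of this coupling with the derivative of $-\sigma(x_c^r - x_c^l)$ that allows the interior dissipation $-\int_{\Gamma(t)}|\partial_s\mu|^2\,\mathrm{d}s$ and the contact-point dissipation $-\tfrac{1}{\eta}\bigl[(\mathrm{d}x_c^l/\mathrm{d}t)^2 + (\mathrm{d}x_c^r/\mathrm{d}t)^2\bigr]$ to appear cleanly on the right-hand side.
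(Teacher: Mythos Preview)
Your proposal is correct and follows essentially the same approach as the paper, which simply states that the proof is similar to Proposition~\ref{prop weak form structure preserving} and omits the details. You have correctly identified and handled the two genuinely new ingredients in the open-curve setting---the admissibility of $\partial_t\boldsymbol{X}\in\mathbb{X}$ via the contact point condition $y(0,t)=y(1,t)=0$, and the cancellation of the $\sigma$ boundary terms against the derivative of the substrate energy---so your write-up in fact supplies the details the paper leaves implicit.
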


The proof is similar to Proposition \ref{prop weak form structure preserving}, we omit the details here for brevity.

\subsection{A SPFEM and its proprieties}

Introducing the finite element spaces \begin{equation}
    \mathbb{K}_0^h\coloneqq \left\{u^h\in\mathbb{K}^h\mid u^h(0)=u^h(1)=0\right\},\qquad\mathbb{X}^h\coloneqq \mathbb{K}^h\times\mathbb{K}^h_0.
\end{equation} The semi-discretization in space of \eqref{new variational formulation solid-state dewetting} and its area conservation and energy dissipation properties are similar to Section \ref{section sp-pfem}, we omit the details here for brevity.

Let $\tau$ be the uniform time step, denotes the approximation of $\Gamma(t)=\boldsymbol{X}(\cdot,t)$ at $t_m=m\tau,m=0,1,\dots$ as $\Gamma^m=\boldsymbol{X}^m(\cdot)=\cup_{j=1}^N\boldsymbol{h}_j^m$ where $\boldsymbol{h}_j^m\coloneqq \boldsymbol{X}^m(\rho_j)-\boldsymbol{X}^m(\rho_{j-1})$. Then the structure-preserving discretization of SPFEM for solid-state dewetting \eqref{eqn:stab solid-state dewetting reformulate} can be stated as: Suppose the initial approximation $\Gamma^0(\cdot)\in \mathbb{X}^h$ is given by $\boldsymbol{X}^0(\rho_j)=\boldsymbol{X}_0(\rho_j), \forall 0\leq j\leq N$. For any $m=0, 1, 2, \ldots$, find the solution $(\boldsymbol{X}^{m+1}(\cdot)=(x^{m+1}(\cdot),y^{m+1}(\cdot))^T, \mu^{m+1}(\cdot))\in \mathbb{X}^h\times \mathbb{K}^h$, such that
\begin{subequations}\label{eqn:sp-pfem solid-state dewetting}
    \begin{align}
        \begin{split}
            &\Bigl(\boldsymbol{n}^{m+\frac{1}{2}}\cdot\frac{\boldsymbol{X}^{m+1}-\boldsymbol{X}^m}{\tau}, \varphi^{h}\Bigr)_{\Gamma^m}^h+\Bigl(\partial_s\mu^{m+1}, \partial_s\varphi^h\Bigr)_{\Gamma^m}^h=0,\qquad \forall\varphi^h\in \mathbb{K}^h,
        \end{split}\\
        \begin{split}
            &\Bigl(\mu^{m+1}\boldsymbol{n}^{m+\frac{1}{2}},\boldsymbol{\omega}^h\Bigr)_{\Gamma^m}^h-\Bigl(\hat{\boldsymbol{G}}_k(\theta^m)\partial_s\boldsymbol{X}^m,\partial_s\boldsymbol{\omega}^h\Bigr)_{\Gamma^m}^h-\frac{1}{\eta}\left[\frac{x_l^{m+1}-x_l^m}{\tau}\omega_1^h(0)+\frac{x_r^{m+1}-x_r^m}{\tau}\omega_1^h(1)\right] \\
            &\qquad\qquad\qquad\,\,\,+\sigma\left[\omega_1^h(1)-\omega_1^h(0)\right]=0,\qquad\forall\boldsymbol{\omega}^h=(\omega_1^h,\omega_2^h)\in\mathbb{X}^h,
        \end{split}
    \end{align}
\end{subequations} satisfying $x_l^{m+1}=x^{m+1}(0)\leq x_r^{m+1}=x^{m+1}(1)$. The definition of $\boldsymbol{G}_k(\theta^m)$ is similar to \eqref{dis surface energy matrix}.

Denote the area $A_o^m$ enclosed by the open polygonal curve $\Gamma(t)$ and the substrate and the total surface energy $W_o^m$ as \begin{subequations}
    \begin{align}
        A_o^m&\coloneqq \frac{1}{2}\sum_{j=1}^N\left(x^m(\rho_j)-x^m(\rho_{j-1})\right)\left(y^m(\rho_j)+y^m(\rho_{j-1})\right),\\
        W_o^m&\coloneqq \sum_{j=1}^N|\boldsymbol{h}_j^m|\hat{\gamma}(\theta^m_j)-\sigma\left(x_r^m-x_l^m\right).
    \end{align}
\end{subequations} Then for the SPFEM \eqref{eqn:sp-pfem solid-state dewetting}, we have following result on its structure preservation.

\begin{thm}[structure-preserving]
    For any $\hat{\gamma}(\theta)$ satisfying \eqref{es condition on gamma}, the SPFEM \eqref{eqn:sp-pfem solid-state dewetting} is area conservative and unconditional energy dissipative with sufficiently large $k(\theta)$, i.e. \begin{equation}\label{eqn:structure preserving open}
        A_o^{m+1}=A_o^m=\cdots=A_o^0,\qquad W_o^{m+1}\leq W_o^m\leq\cdots\leq W_o^0,\qquad\forall m\geq 0.
    \end{equation}
\end{thm}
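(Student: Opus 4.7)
The plan is to mirror the structure of the proof of Theorem \ref{MAIN RESULT} for the closed-curve case, treating the two geometric invariants separately, and then identify the extra contributions coming from the substrate term $-\sigma(x_r^m-x_l^m)$ and from the relaxed contact-angle boundary condition. Throughout, I will assume $k(\theta)\geq k_0(\theta)$ so that the local energy estimate \eqref{eqn:local energy estimate} of Lemma \ref{lemma:local energy estimate} is available, and I will make essential use of the boundary constraint $y^m(0)=y^m(1)=0$ built into the space $\mathbb{X}^h=\mathbb{K}^h\times\mathbb{K}^h_0$.

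For area conservation, I would take $\varphi^h\equiv 1\in\mathbb{K}^h$ in the first equation of \eqref{eqn:sp-pfem solid-state dewetting}, which annihilates the $\partial_s\mu^{m+1}$ term and leaves
\begin{equation}
\Bigl(\boldsymbol{n}^{m+\frac{1}{2}}\cdot(\boldsymbol{X}^{m+1}-\boldsymbol{X}^m),1\Bigr)^h_{\Gamma^m}=0.
\end{equation}
Expanding $\boldsymbol{n}^{m+\tfrac12}$ using the definition analogous to \eqref{dis surface energy matrix} and summing over segments, the left-hand side becomes a trapezoidal telescoping sum in $(x^{m+1}-x^m,y^{m+1}-y^m)$. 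The precise algebraic manipulation is the same as in \cite[Thm.~2.1]{bao2021structure}; in the open-curve setting the constraint $y^m(0)=y^m(1)=y^{m+1}(0)=y^{m+1}(1)=0$ suppresses the boundary contributions so the calculation yields exactly $A_o^{m+1}-A_o^m=0$ from \eqref{fully discrete area and energy, area} (extended to the open polygon).

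For the energy dissipation inequality, I would set $\varphi^h=\tau\mu^{m+1}\in\mathbb{K}^h$ and $\boldsymbol{\omega}^h=\boldsymbol{X}^{m+1}-\boldsymbol{X}^m$, where the latter indeed lies in $\mathbb{X}^h$ because $y^{m+1}-y^m$ vanishes at $\rho=0,1$. Eliminating the common term $\bigl(\mu^{m+1}\boldsymbol{n}^{m+\frac{1}{2}},\boldsymbol{X}^{m+1}-\boldsymbol{X}^m\bigr)^h_{\Gamma^m}$ between the two equations produces
\begin{equation}
\Bigl(\hat{\boldsymbol{G}}_k(\theta^m)\partial_s\boldsymbol{X}^{m+1},\partial_s(\boldsymbol{X}^{m+1}-\boldsymbol{X}^m)\Bigr)^h_{\Gamma^m}
=-\tau\bigl(\partial_s\mu^{m+1},\partial_s\mu^{m+1}\bigr)^h_{\Gamma^m}
-\tfrac{1}{\eta\tau}\bigl[(x_l^{m+1}-x_l^m)^2+(x_r^{m+1}-x_r^m)^2\bigr]
+\sigma\bigl[(x_r^{m+1}-x_r^m)-(x_l^{m+1}-x_l^m)\bigr].
\end{equation}
Applying the local energy estimate \eqref{eqn:local energy estimate} edgewise with $\boldsymbol{h}=\boldsymbol{h}_j^m$, $\hat{\boldsymbol{h}}=\boldsymbol{h}_j^{m+1}$ (exactly as in \eqref{eqn:energy difference}) bounds the left-hand side below by $\sum_j|\boldsymbol{h}_j^{m+1}|\hat{\gamma}(\theta_j^{m+1})-\sum_j|\boldsymbol{h}_j^m|\hat{\gamma}(\theta_j^m)$. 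Moving the $\sigma$-term to the other side and recognizing that
\begin{equation}
W_o^{m+1}-W_o^m=\sum_{j=1}^N|\boldsymbol{h}_j^{m+1}|\hat{\gamma}(\theta_j^{m+1})-\sum_{j=1}^N|\boldsymbol{h}_j^m|\hat{\gamma}(\theta_j^m)-\sigma\bigl[(x_r^{m+1}-x_r^m)-(x_l^{m+1}-x_l^m)\bigr],
\end{equation}
one arrives at $W_o^{m+1}-W_o^m\leq -\tau\bigl(\partial_s\mu^{m+1},\partial_s\mu^{m+1}\bigr)^h_{\Gamma^m}-\tfrac{1}{\eta\tau}\bigl[(x_l^{m+1}-x_l^m)^2+(x_r^{m+1}-x_r^m)^2\bigr]\leq 0$, which gives the desired unconditional energy dissipation.

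The main obstacle I anticipate is the correct bookkeeping of the substrate and contact-point contributions: the $\sigma$-boundary term in \eqref{eqn:sp-pfem solid-state dewetting} must \emph{exactly} reproduce the discrete difference of the substrate energy $-\sigma(x_r^m-x_l^m)$, and the $1/\eta$-term must become a non-positive quadratic after substitution. This cancellation forces the precise choice of the weak form, and verifying it carefully (together with checking that $\boldsymbol{X}^{m+1}-\boldsymbol{X}^m$ is an admissible test function because of the $y$-boundary conditions) is the only genuinely new ingredient beyond Theorem \ref{MAIN RESULT}; the remainder of the argument is entirely parallel to the closed-curve proof and relies on the already-established local energy estimate.
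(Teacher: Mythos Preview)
Your proposal is correct and follows essentially the same route as the paper's proof: the paper also refers the area-conservation step to \cite[Theorem~2.1]{bao2021structure}, then takes $\varphi^h=\mu^{m+1}$ and $\boldsymbol{\omega}^h=\boldsymbol{X}^{m+1}-\boldsymbol{X}^m$ in \eqref{eqn:sp-pfem solid-state dewetting}, combines the two identities into the single relation you wrote, applies the edgewise local energy estimate \eqref{eqn:local energy estimate} exactly as in \eqref{eqn:energy difference}, and then absorbs the $\sigma$-boundary term into $W_o^{m+1}-W_o^m$. Your extra remarks on checking admissibility of $\boldsymbol{X}^{m+1}-\boldsymbol{X}^m\in\mathbb{X}^h$ via the $y$-boundary conditions and on why the boundary terms vanish in the area computation are useful clarifications that the paper leaves implicit.
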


\begin{proof}
    For the area conservation part, the proof is similar to \cite[Theorem~2.1]{bao2021structure} which is omitted here for brevity.
  
    For the energy dissipation part, take $\varphi^h=\mu^{m+1},\boldsymbol{\omega}^h=\boldsymbol{X}^{m+1}-\boldsymbol{X}^m$ in \eqref{eqn:sp-pfem solid-state dewetting}, then \begin{equation}\label{energy lemma open}
        \begin{aligned}
            \tau&\left(\partial_s\mu^{m+1},\partial_s\mu^{m+1}\right)_{\Gamma^m}^h+\left(\hat{\boldsymbol{G}}_k(\theta^m)\partial_s\boldsymbol{X}^{m+1},\partial_s(\boldsymbol{X}^{m+1}-\boldsymbol{X}^m)\right)_{\Gamma^m}^h\\
            &\qquad+\frac{1}{\eta}\left[\frac{(x_l^{m+1}-x_l^m)^2}{\tau}+\frac{(x_r^{m+1}-x_r^m)^2}{\tau}\right]-\sigma\left[(x_r^{m+1}-x_r^m)-(x_l^{m+1}-x_l^m)\right]=0.
        \end{aligned}
    \end{equation} 

    Noting that Lemma~\ref{lemma:local energy estimate} is still valid for open polygonal curve under the condition \eqref{es condition on gamma} on $\hat{\gamma}(\theta)$, thus by similar derivation in \eqref{eqn:energy difference}, we have \begin{equation}\label{eqn:energy difference open}
        \left(\hat{\boldsymbol{G}}_k(\theta^m)\partial_s\boldsymbol{X}^{m+1},\partial_s(\boldsymbol{X}^{m+1}-\boldsymbol{X}^m)\right)_{\Gamma^m}^h\geq \sum_{j=1}^N|\boldsymbol{h}_j^{m+1}|\hat{\gamma}(\theta^{m+1}_j)-\sum_{j=1}^N|\boldsymbol{h}_j^{m}|\hat{\gamma}(\theta^{m}_j).
    \end{equation} Therefore, by \eqref{energy lemma open} and \eqref{eqn:energy difference open},\begin{equation}
        \begin{aligned}
            W^{m+1}_o-W^m_o&=\sum_{j=1}^N|\boldsymbol{h}_j^{m+1}|\hat{\gamma}(\theta^{m+1}_j)-\sum_{j=1}^N|\boldsymbol{h}_j^{m}|\hat{\gamma}(\theta^{m}_j)\\
            &\quad-\sigma\left[\left(x_r^{m+1}-x_r^m\right)-\left(x_l^{m+1}-x_l^m\right)\right]\\
            &\leq \left(\hat{\boldsymbol{G}}_k(\theta^m)\partial_s\boldsymbol{X}^{m+1},\partial_s(\boldsymbol{X}^{m+1}-\boldsymbol{X}^m)\right)_{\Gamma^m}^h\\
            &\quad-\sigma\left[\left(x_r^{m+1}-x_r^m\right)-\left(x_l^{m+1}-x_l^m\right)\right]\\
            &\leq-\tau\left(\partial_s\mu^{m+1},\partial_s\mu^{m+1}\right)_{\Gamma^m}^h\\
            &\quad-\frac{1}{\eta}\left[\frac{(x_l^{m+1}-x_l^m)^2}{\tau}+\frac{(x_r^{m+1}-x_r^m)^2}{\tau}\right]\\
            &\leq 0,\qquad\forall m\geq 0, 
        \end{aligned} 
    \end{equation} which implies the energy dissipation in \eqref{eqn:structure preserving open}.
\end{proof}

\begin{remark}
    Due to the local energy estimate \eqref{eqn:local energy estimate} being only dependent on $\hat{\gamma}(\theta)$, all results concerning the energy dissipation of the SPFEM \eqref{eqn:sp-pfem surface diffusion} on evolutions of closed curves can be extended to the SPFEM \eqref{eqn:sp-pfem solid-state dewetting} on evolutions of open curves in solid-state dewetting, including Remark~\ref{kfold remark}--\ref{symmetric remark}. 
\end{remark}

\section{Numerical results}\label{section numerical results}

In this section, we report numerical experiments for the proposed SPFEM \eqref{eqn:sp-pfem surface diffusion} and \eqref{eqn:sp-pfem solid-state dewetting} for time evolutions of closed curves and open curves, respectively. Extensive results are provided to illustrate their efficiency, accuracy, area conservation and unconditional energy stability. 

In the numerical tests, following typical anisotropic surface energies are considered in the simulations: \begin{itemize}
    \item Case I: $\hat{\gamma}(\theta)=1+\beta\cos 3\theta$ with $|\beta|<1$. It is weakly anisotropic when $|\beta|<\frac{1}{8}$ and strongly anisotropic otherwise;\\
    \item Case II: $\hat{\gamma}(\theta)=\sqrt{1+b\cos^2\theta}$ with $b>-1$.
\end{itemize}

To compute the minimal stabilizing function $k_0(\theta)$, we solve the optimization problem \eqref{eqn:def minimal stabilizing function} for 20 uniformly distributed points in $[-\pi,\pi]$ and do linear
interpolation for the intermediate points.

To verify the quadratic convergence rate in space and linear convergence rate in time, the time step $\tau$ is always chosen as $\tau=16h^2$ except it is stated otherwise. The manifold distance \cite{zhao2021energy,li2021energy} \begin{equation}
    M(\Gamma_1,\Gamma_2)\coloneqq 2|\Omega_1\cup\Omega_2|-|\Omega_1|-|\Omega_2|,
\end{equation} is employed to measure the distance between two closed curves $\Gamma_1,\Gamma_2$, where $\Omega_1,\Omega_2$ are the regions enclosed by $\Gamma_1,\Gamma_2$ and $|\Omega|$ represents the area of $|\Omega|$. Suppose $\Gamma^m$ is the numerical approximation of $\Gamma^h(t=t_m\coloneqq m\tau)$, thus the numerical error is defined as \begin{equation}
    e^h(t)\Big|_{t=t_m}\coloneqq M(\Gamma^m,\Gamma(t=t_m)).
\end{equation} In the Newton's iteration, the tolerance value is set to be $\text{tol}=10^{-12}$.

To test the mesh quality, the energy stability and area conservation numerically, we introduce the following indicators: the weighted mesh ratio \begin{equation}
       R_{\gamma}^h(t)\coloneqq \frac{\max\limits_{1\leq j\leq N}\hat{\gamma}(\theta_j)|\boldsymbol{h}_j|}{\min\limits_{1\leq j\leq N}\hat{\gamma}(\theta_j)|\boldsymbol{h}_j|},
\end{equation} the normalized area loss and the normalized energy for closed curves: \begin{equation}
    \left.\frac{\Delta A^h_c(t)}{A^h_c(0)}\right|_{t=t_m}\coloneqq \frac{A^m_c-A^0_c}{A_c^0},\qquad\left.\frac{W^h_c(t)}{W^h_c(0)}\right|_{t=t_m}\coloneqq \frac{W^m_c}{W^0_c},
\end{equation} and for open curves: \begin{equation}
    \left.\frac{\Delta A^h_o(t)}{A^h_o(0)}\right|_{t=t_m}\coloneqq \frac{A^m_o-A^0_o}{A_o^0},\qquad\left.\frac{W^h_o(t)}{W^h_o(0)}\right|_{t=t_m}\coloneqq \frac{W^m_o}{W^0_o}.
\end{equation}

In the following numerical tests, the initial shapes are chosen as a complete and a half ellipse with major axis $4$ and minor axis $1$ for closed curves and open curves, respectively, unless stated otherwise. The exact solution $\Gamma(t)$ is approximated by choosing $k(\theta)=k_0(\theta)$ with a small mesh size $h_e=2^{-8}$ and a time step $\tau_e=2^{-12}$ in \eqref{eqn:sp-pfem surface diffusion}. For solid-state dewetting problems, we always choose the contact line mobility $\eta=100$.

\subsection{Results for closed curves}

\begin{figure}[t!]
\centering
\includegraphics[width=0.5\textwidth]{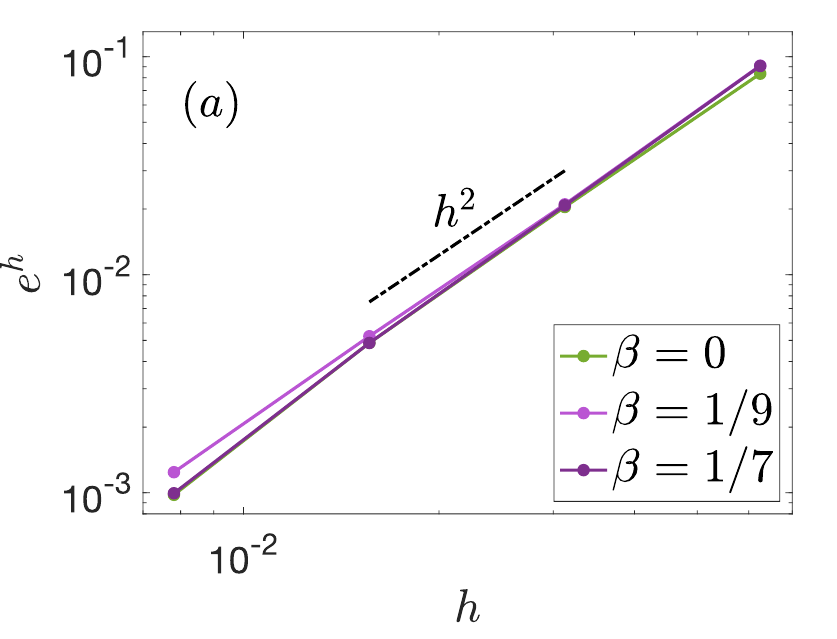}\includegraphics[width=0.5\textwidth]{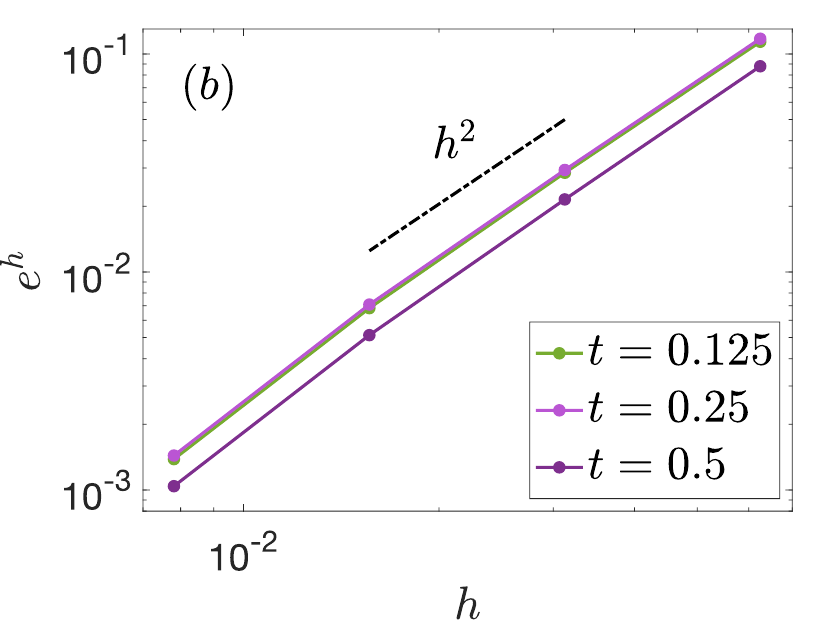}
\caption{Convergence rates of the SPFEM \eqref{eqn:sp-pfem surface diffusion} with $k(\theta)=k_0(\theta)$ for: (a) anisotropy in Case I at $t=0.5$ with different $\beta$; and (b) anisotropy in Case II with $b=-0.8$ at different times $t=0.125,0.25,0.5$.}
\label{fig: convergent}
\end{figure}

Fig.~\ref{fig: convergent} plots the convergence rates of the proposed SPFEM \eqref{eqn:sp-pfem surface diffusion} for: (a) the $3$-fold anisotropy $\hat{\gamma}(\theta)=1+\beta\cos 3\theta$ with different anisotropic strengths $\beta$ under a fixed time $t=0.5$; (b) the ellipsoidal anisotropy $\hat{\gamma}(\theta)=\sqrt{1-0.8\cos^2\theta}$ at different times. It clearly demonstrates that the second-order spatial convergence remains consistent regardless of anisotropies and computational times, suggesting a high level of robustness in the convergence rate.

\begin{figure}[t!]
\centering
\includegraphics[width=0.5\textwidth]{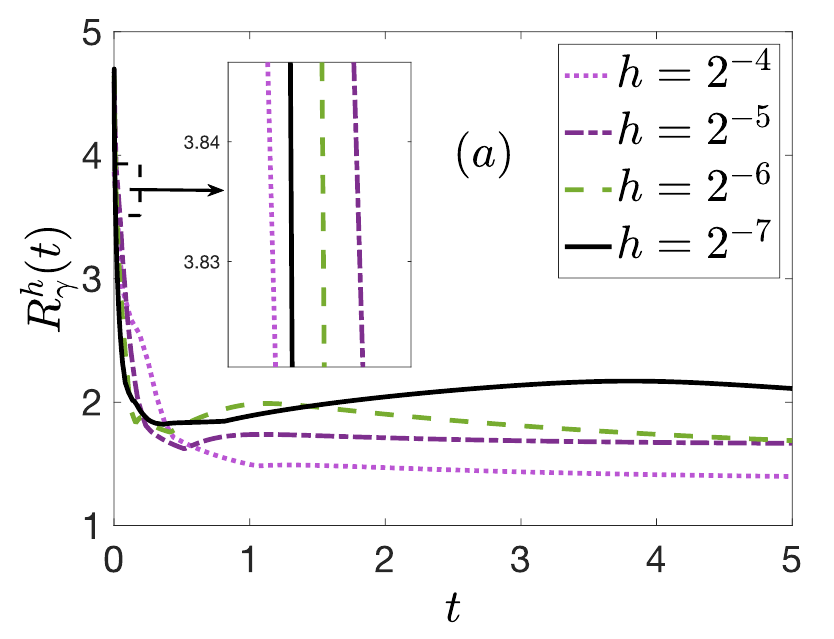}\includegraphics[width=0.5\textwidth]{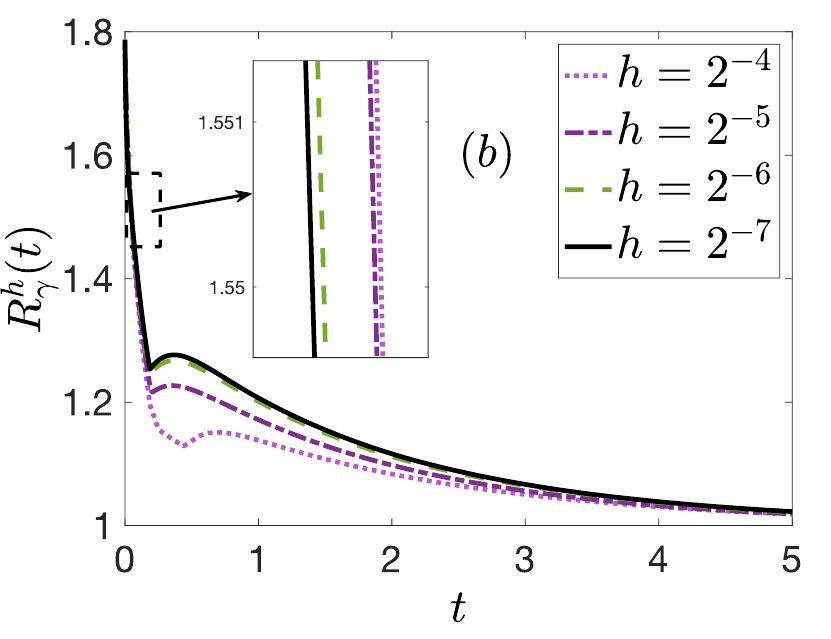}
\caption{Weighted mesh ratio of the SPFEM \eqref{eqn:sp-pfem surface diffusion} with $k(\theta)=k_0(\theta)$ for: (a) anisotropy in Case I with $\beta=\frac{1}{9}$; and (b) anisotropy in Case II with $b=-0.8$.}
\label{fig: mesh}
\end{figure}

Fig.~\ref{fig: mesh} exhibits that the weighted mesh ratio $R_{\gamma}^h$ converge to constants as $t\to+\infty$. This suggests an asymptotic quasi-uniform mesh distribution of the proposed SPFEM \eqref{eqn:sp-pfem surface diffusion}.

The time evolutions of the normalized area loss $\frac{\Delta A^h_c(t)}{A^h_c(0)}$, the number of the Newton's iteration with $h=2^{-7},\tau=2^{-10}$ are given in Fig.~\ref{fig: volume}. And the normalized energy $\frac{W^h_c(t)}{W^h_c(t)}$ with different $h$ are summarized in Fig.~\ref{fig: energy}.

\begin{figure}[t!]
\centering
\includegraphics[width=0.5\textwidth]{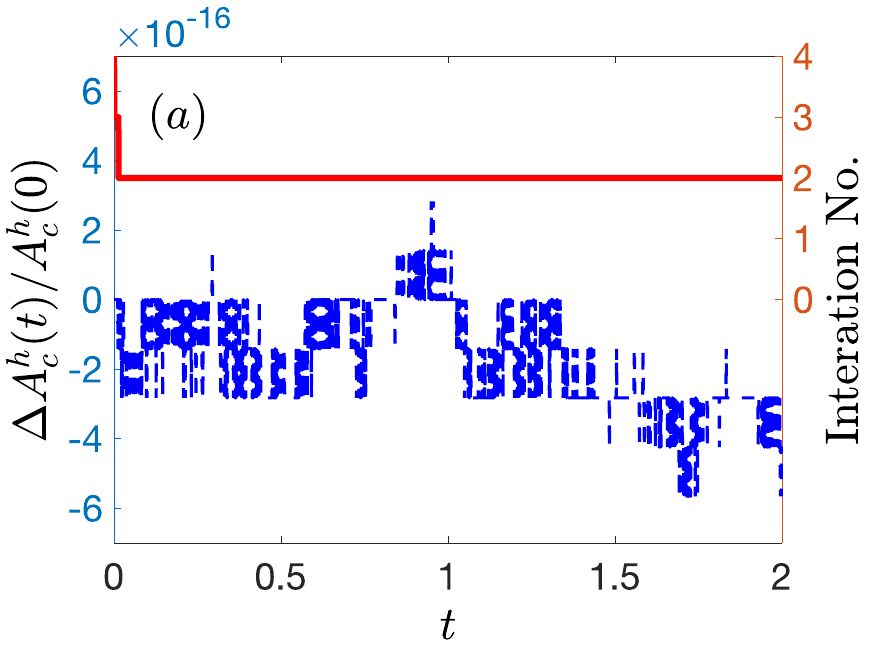}\includegraphics[width=0.5\textwidth]{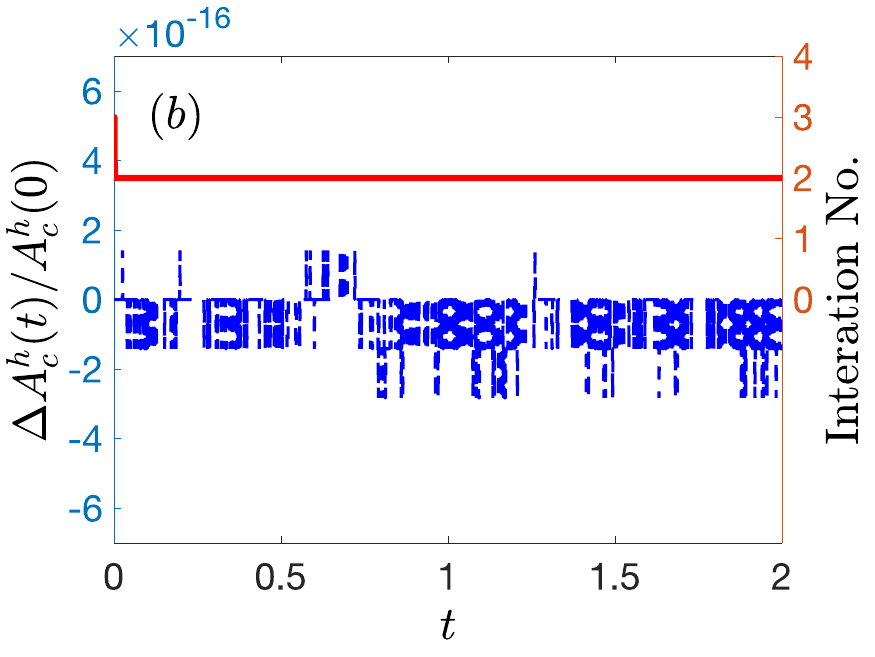}
\caption{Normalized area loss (blue dashed line) and iteration number (red line) of the SPFEM \eqref{eqn:sp-pfem surface diffusion} with $k(\theta)=k_0(\theta)$ and $h=2^{-7},\tau=2^{-10}$ for: (a) anisotropy in Case I with $\beta=\frac{1}{2}$; and (b) anisotropy in Case II with $b=-0.8$.}
\label{fig: volume}
\end{figure}

\begin{figure}[t!]
\centering
\includegraphics[width=0.5\textwidth]{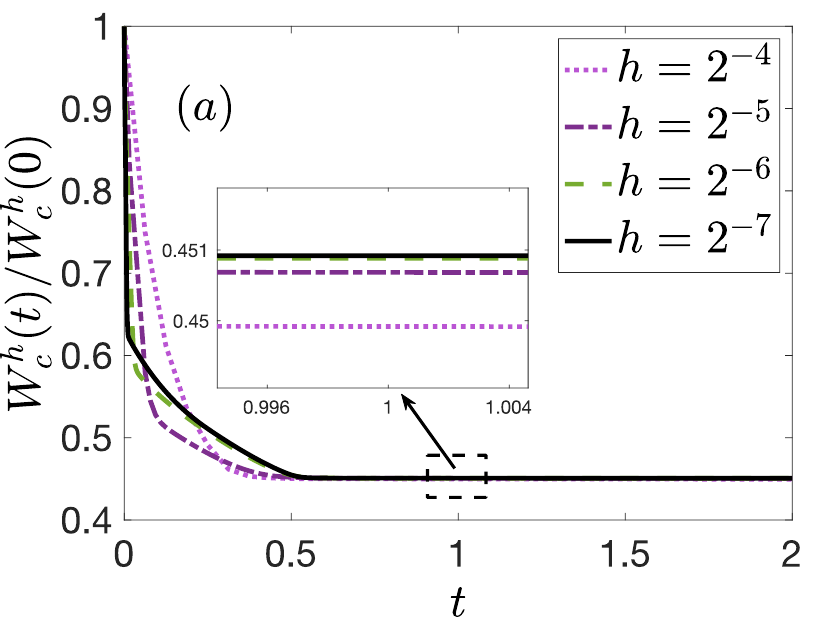}\includegraphics[width=0.5\textwidth]{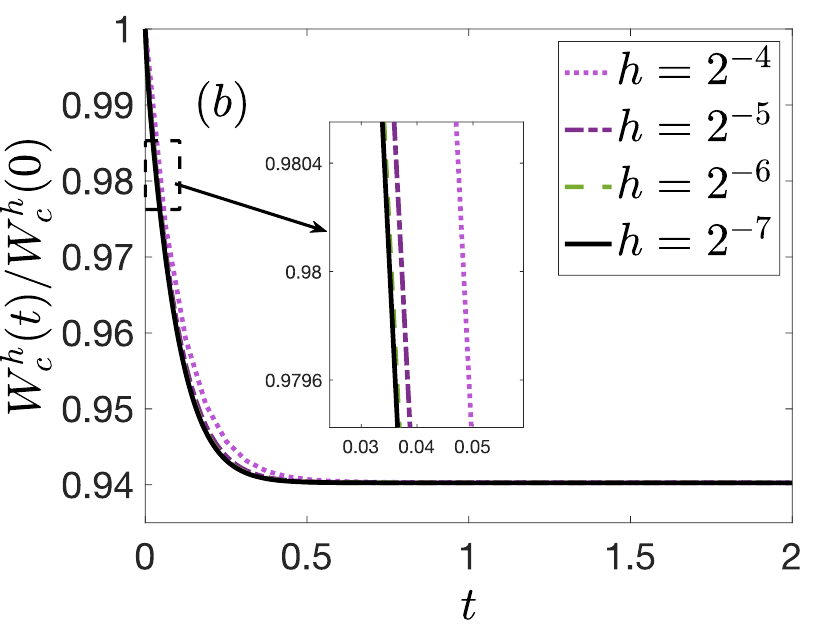}
\caption{Normalized energy of the SPFEM \eqref{eqn:sp-pfem surface diffusion} with $k(\theta)=k_0(\theta)$ for: (a) anisotropy in Case I with $\beta=\frac{1}{2}$; and (b) anisotropy in Case II with $b=-0.8$.}
\label{fig: energy}
\end{figure}

The observation from Fig.~\ref{fig: volume}--Fig.~\ref{fig: energy} reveals that: \begin{enumerate}
    \item The normalized area loss is at $10^{-16}$, aligns closely with the order of the round-off error (cf. Fig.~\ref{fig: volume}). This observation affirms the practical preservation of area in simulations.\\
    \item The numbers of the Newton's iteration are initially $3$ or $4$, and quickly descend to $2$ (cf. Fig.~\ref{fig: volume}). This discovery indicates that the proposed SPFEM \eqref{eqn:sp-pfem surface diffusion} can be solved with high efficiency, requiring only a few iterations.\\
    \item The normalized energy is monotonically decreasing when $\hat{\gamma}(\theta)$ satisfies the energy stable conditions \eqref{es condition on gamma} in Definition~\ref{def anisotropic stable} (cf. Fig.~\ref{fig: energy}). Results in Fig.~\ref{fig: energy} (a) shows that the proposed SPFEM \eqref{eqn:sp-pfem surface diffusion} still preserves good energy stability properties when $\beta$ takes its maximum value of $1/2$ in Remark~\ref{kfold remark}. And results in Fig.~\ref{fig: energy} (b) indicate that, unlike the ES-PFEM in \cite{li2021energy}, the SPFEM remains unconditionally energy stable when $b<-1/2$ as stated in Remark~\ref{ellipsoidal remark}.
\end{enumerate}

\subsection{Results for open curves in solid-state dewetting}
\begin{figure}[t!]
\centering
\includegraphics[width=0.5\textwidth]{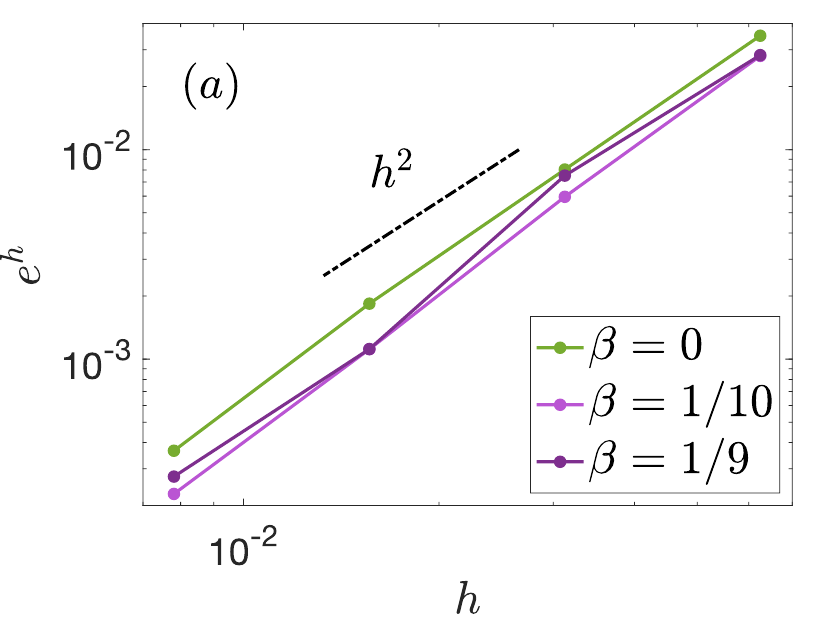}\includegraphics[width=0.5\textwidth]{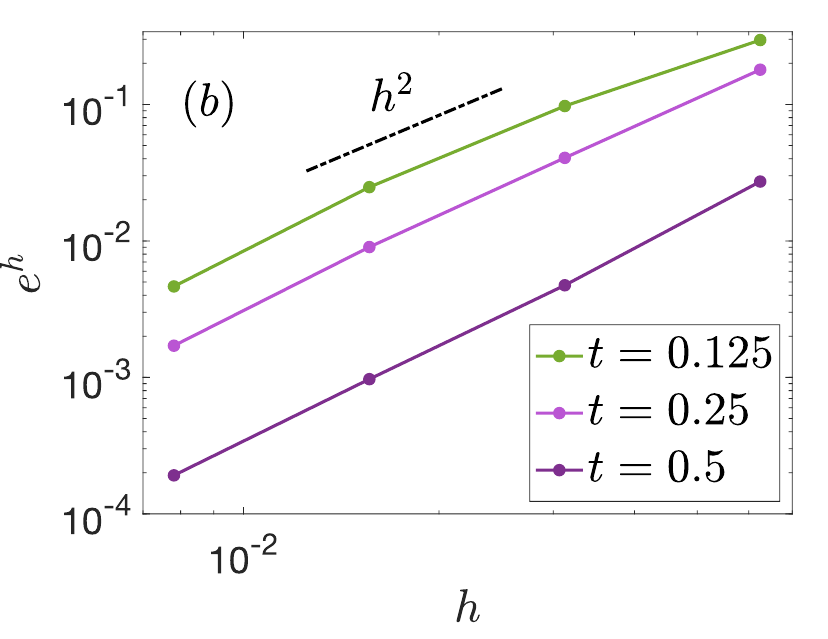}
\caption{Convergence rates of the SPFEM \eqref{eqn:sp-pfem surface diffusion} with $k(\theta)=k_0(\theta)$ and $\sigma=-\frac{\sqrt{2}}{2}$ for: (a) anisotropy in Case I at $t=0.5$ with different $\beta$; and (b) anisotropy in Case II with $b=2$ at different times $t=0.125,0.25,0.5$.}
\label{fig: convergent_open}
\end{figure}

Fig.~\ref{fig: convergent_open} plots the computation errors of the proposed SPFEM \eqref{eqn:sp-pfem solid-state dewetting} for: (a) the $3$-fold anisotropy $\hat{\gamma}(\theta)=1+\beta\cos 3\theta$ with different anisotropic strengths $\beta$ under a fixed time $t=0.5$; (b) the ellipsoidal anisotropy $\hat{\gamma}(\theta)=\sqrt{1+2\cos^2\theta}$ at different times. The results verify the quadratic convergence rate for the proposed SPFEM \eqref{eqn:sp-pfem solid-state dewetting}.

\begin{figure}[t!]
\centering
\includegraphics[width=0.5\textwidth]{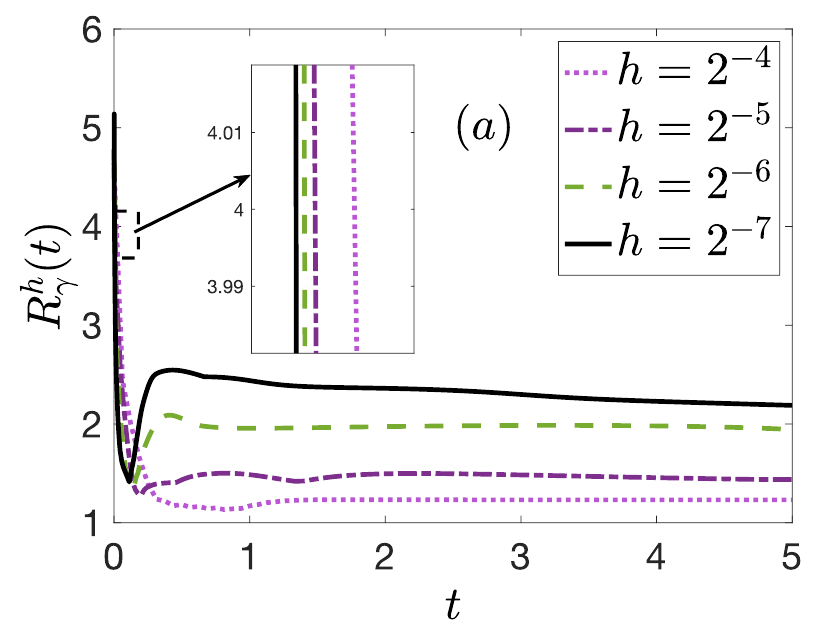}\includegraphics[width=0.5\textwidth]{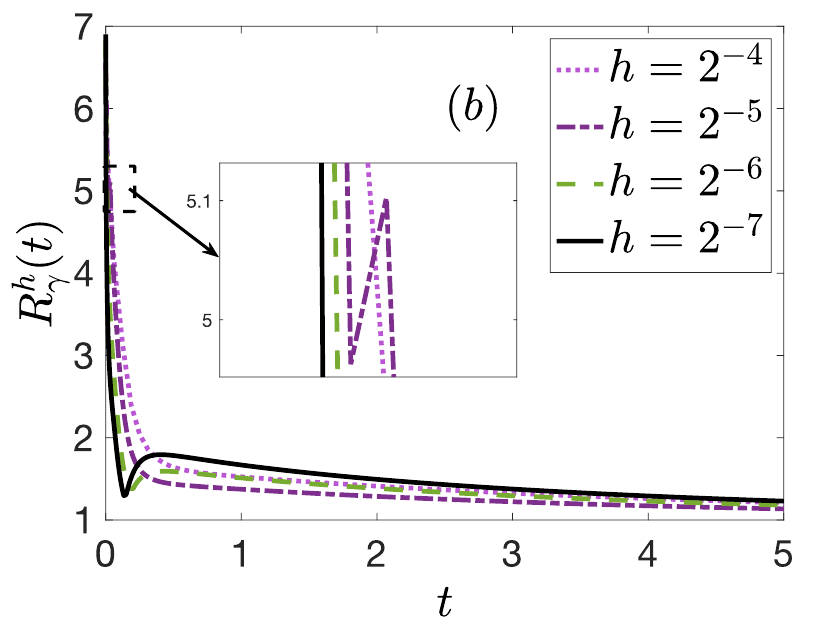}
\caption{Weighted mesh ratio of the SPFEM \eqref{eqn:sp-pfem surface diffusion} with $k(\theta)=k_0(\theta)$ and $\sigma=-\frac{\sqrt{2}}{2}$ for: (a) anisotropy in Case I with $\beta=\frac{1}{9}$; (b) anisotropy in Case II with $b=2$.}
\label{fig: mesh_open}
\end{figure}

In Fig.~\ref{fig: mesh_open}, the weighted mesh ratios $R_\gamma^h$ tend to constants as $t\to+\infty$, showing that the SPFEM \eqref{eqn:sp-pfem solid-state dewetting} still possesses the asymptotic quasi-uniform distribution.

Time evolutions of the normalized area loss $\frac{\Delta A^h_o(t)}{A^h_o(0)}$, the number of the Newton's iteration with $h=2^{-7},\tau=2^{-10}$ are presented in Fig.~\ref{fig: volume_open}. And the normalized energy $\frac{W^h_o(t)}{W^h_o(t)}$ with different $h$ are illustrated in Fig.~\ref{fig: energy_open}. 

\begin{figure}[t!]
\centering
\includegraphics[width=0.5\textwidth]{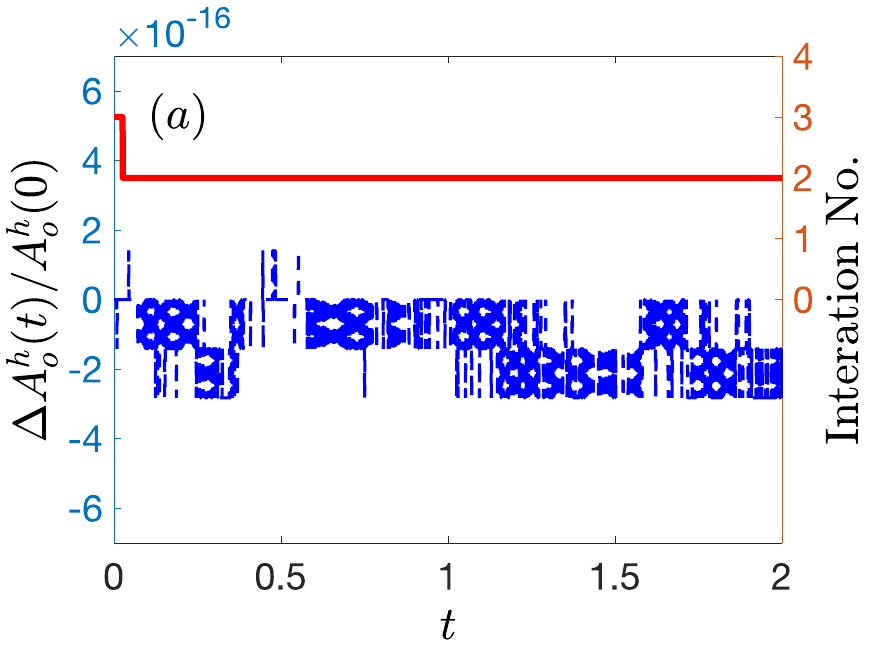}\includegraphics[width=0.5\textwidth]{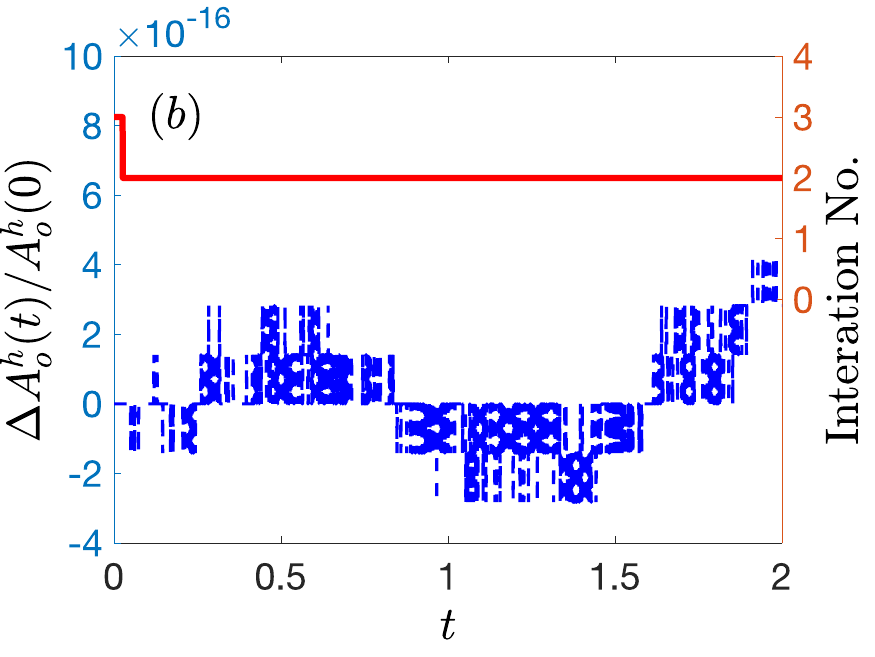}
\caption{Normalized area loss (blue dashed line) and iteration number (red line) of the SPFEM \eqref{eqn:sp-pfem surface diffusion} with $k(\theta)=k_0(\theta)$, $\sigma=-\frac{\sqrt{2}}{2}$ and $h=2^{-7},\tau=2^{-10}$ for: (a) anisotropy in Case I with $\beta=\frac{1}{9}$; and (b) anisotropy in Case II with $b=2$.}
\label{fig: volume_open}
\end{figure}

\begin{figure}[t!]
\centering
\includegraphics[width=0.5\textwidth]{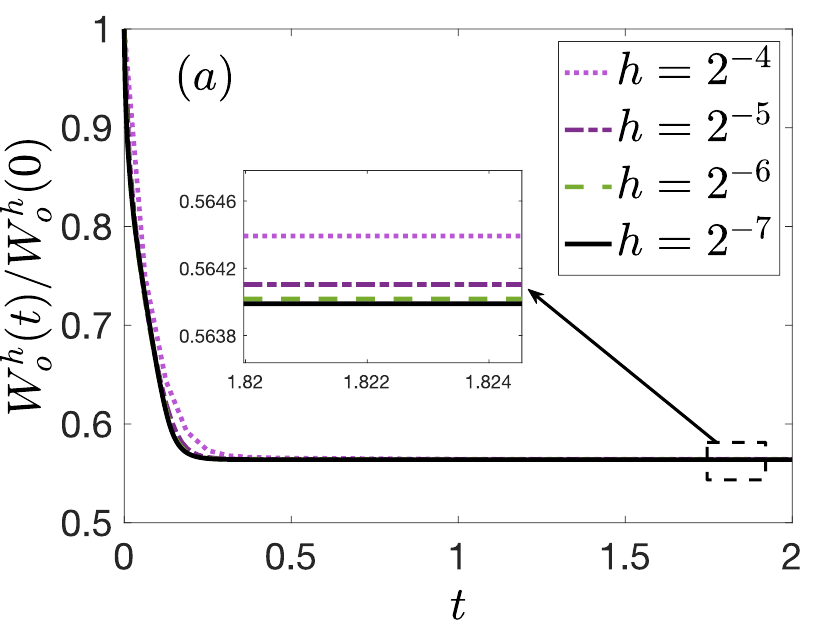}\includegraphics[width=0.5\textwidth]{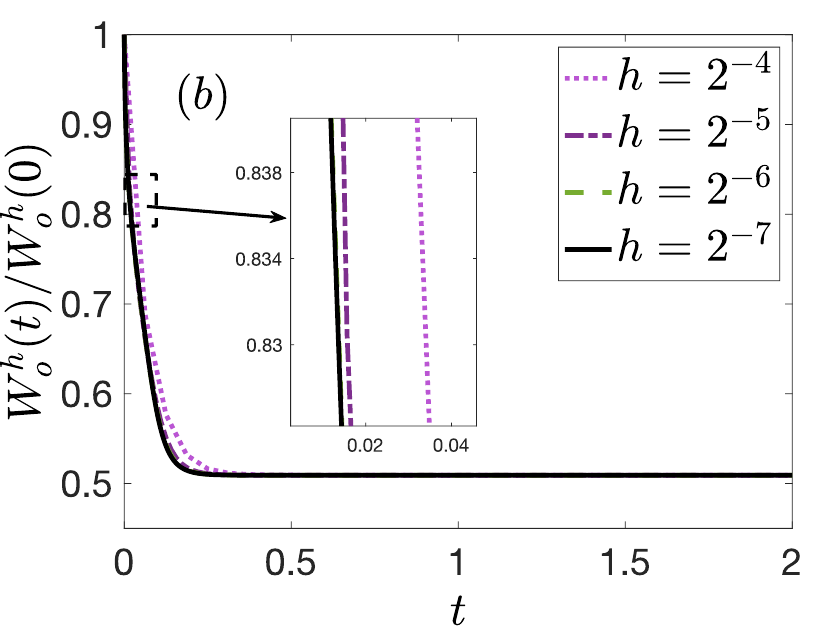}
\caption{Normalized energy of the SPFEM \eqref{eqn:sp-pfem surface diffusion} with $k(\theta)=k_0(\theta)$ and $\sigma=-\frac{\sqrt{2}}{2}$ for: (a) anisotropy in Case I with $\beta=\frac{1}{9}$; and (b) anisotropy in Case II with $b=2$.}
\label{fig: energy_open}
\end{figure}

It can be observed from Fig.~\ref{fig: energy}--Fig.~\ref{fig: volume_open} that: \begin{enumerate}
    \item The normalized area loss is about $10^{-16}$ at the same order of the round-off error (cf. Fig.~\ref{fig: volume_open}), verifying that the area is conserved up to the machine precision.\\
    \item The numbers of the Newton's iteration are initially $3$ and finally $2$ (cf. Fig.~\ref{fig: volume_open}). This finding suggests that, despite the fully-implicit nature of the proposed SPFEM \eqref{eqn:sp-pfem solid-state dewetting}, it can be solved very efficiently.\\
    \item The normalized energy is monotonically decreasing when $\hat{\gamma}(\theta)$ satisfying \eqref{es condition on gamma} (cf. Fig.~\ref{fig: energy_open}). In contrast to the ES-PFEM in \cite{li2021energy}, the proposed SPFEM \eqref{eqn:sp-pfem solid-state dewetting} still guarantees the energy dissipation when $\beta<\frac{1}{10}$ in Case I and $b>1$ in Case II, as asserted by Remark~\ref{kfold remark} and Remark~\ref{ellipsoidal remark}.
\end{enumerate}

\subsection{Application for morphological evolutions}

Finally we apply the proposed SPFEMs \eqref{eqn:sp-pfem surface diffusion} and \eqref{eqn:sp-pfem solid-state dewetting} to simulate the morphological evolutions under the anisotropic surface diffusion. Results for both closed curves and open curves in solid-state dewetting problems are provided.

The morphological evolutions from the initial shapes to their numerical equilibriums are presented in Fig.~\ref{fig: morphevolve}--Fig.~\ref{fig: morphevolve_open}. For closed curve cases, the initial shape is an ellipse with major axis $4$ and minor axis $1$, while for open curve cases, it is an open $4\times 1$ rectangle. 

\begin{figure}[t!]
\centering
\includegraphics[width=1.0\textwidth]{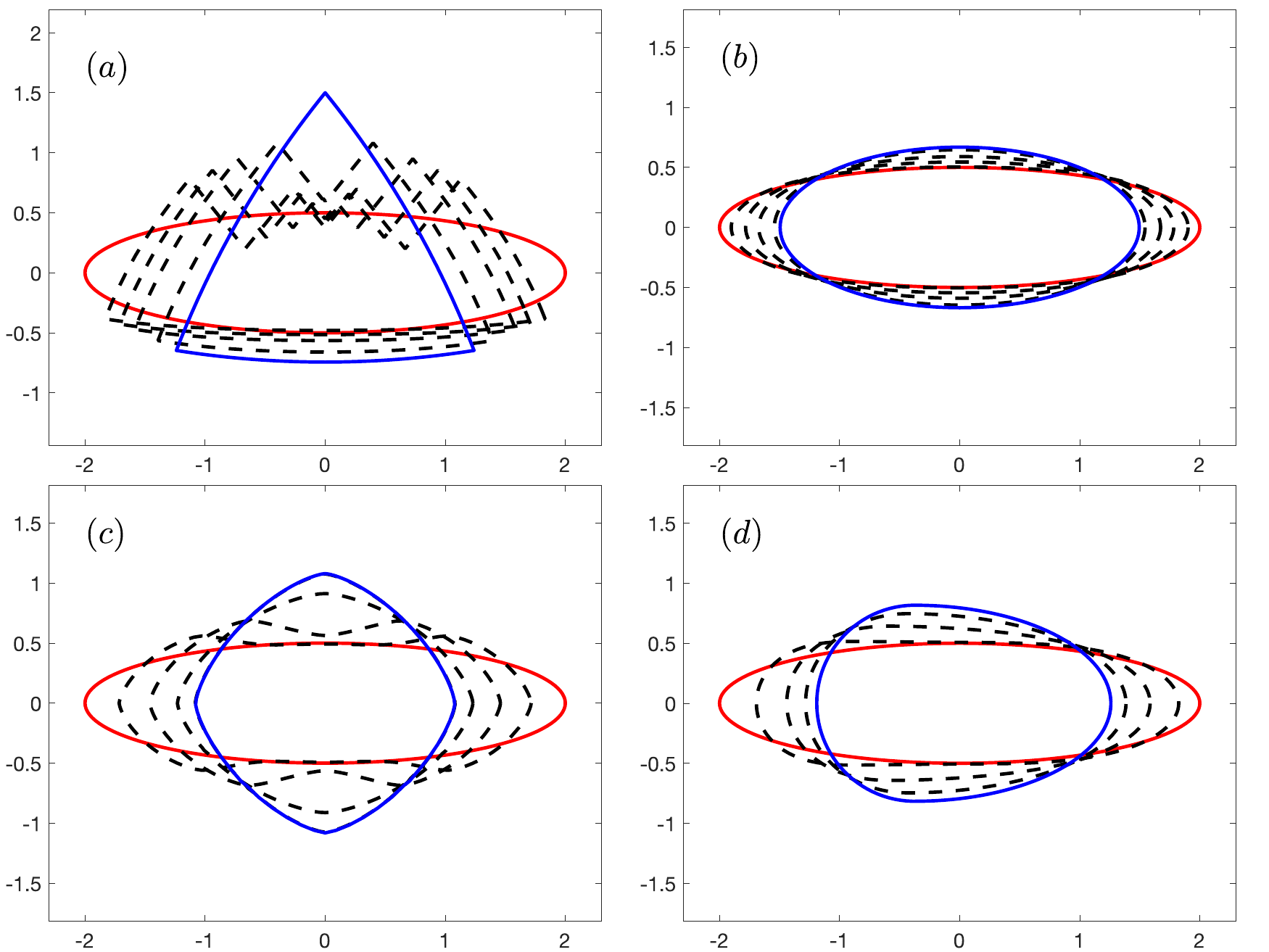}
\caption{Morphological evolutions of an ellipse with major axis $4$ and minor axis $1$ under anisotropic surface diffusion with different surface energies: (a) anisotropy in Case I with $\beta=\frac{1}{2}$; (b) anisotropy in Case II with $b=-0.8$; (c) $\hat{\gamma}(\theta)=1+\frac{1}{16}\cos 4\theta$; (d) $\hat{\gamma}(\theta)=\sqrt{\left(\frac{5}{2}+\frac{3}{2}\text{sgn}(n_1)\right)n_1^2+n_2^2}$ with $\boldsymbol{n}=(n_1,n_2)^T=(-\sin\theta,\cos\theta)^T$. The red and blue lines represent the initial shape and the numerical equilibrium, respectively; and the black dashed lines represent the intermediate curves. The mesh size and the time step are taken as $h=2^{-7},\tau=2^{-10}$.} 
\label{fig: morphevolve}
\end{figure}

Fig.~\ref{fig: morphevolve} plots the morphological evolutions of an ellipse with major axis $4$ and minor axis $1$ under anisotropic surface diffusion with four different surface energies: (a) anisotropy in Case I with $\beta=\frac{1}{2}$, which attends the maximum value in Remark~\ref{kfold remark}; (b) anisotropy in Case II with $b=-0.8$; (c) the $4$-fold anisotropy $\hat{\gamma}(\theta)=1+\frac{1}{16}\cos 4\theta$
\cite{bao2017parametric}; and (d) $\hat{\gamma}(\theta)=\sqrt{\left(\frac{5}{2}+\frac{3}{2}\text{sgn}(n_1)\right)n_1^2+n_2^2}$ with $\boldsymbol{n}=(n_1,n_2)^T=(-\sin\theta,\cos\theta)^T$ \cite{deckelnick2005computation}. 

Results in Fig.~\ref{fig: morphevolve} (b) and Fig.~\ref{fig: morphevolve} (c) show that, compared to the ES-PFEM in \cite{li2021energy}, the proposed SPFEM \eqref{eqn:sp-pfem surface diffusion} demonstrates a better performance over a broader range of parameters during evolutions. Fig.~\ref{fig: morphevolve} (d) indicates that the SPFEM \eqref{eqn:sp-pfem surface diffusion} also works well for a globally $C^1$ and piecewise $C^2$ anisotropy.

\begin{figure}[t!]
\centering
\includegraphics[width=1.0\textwidth]{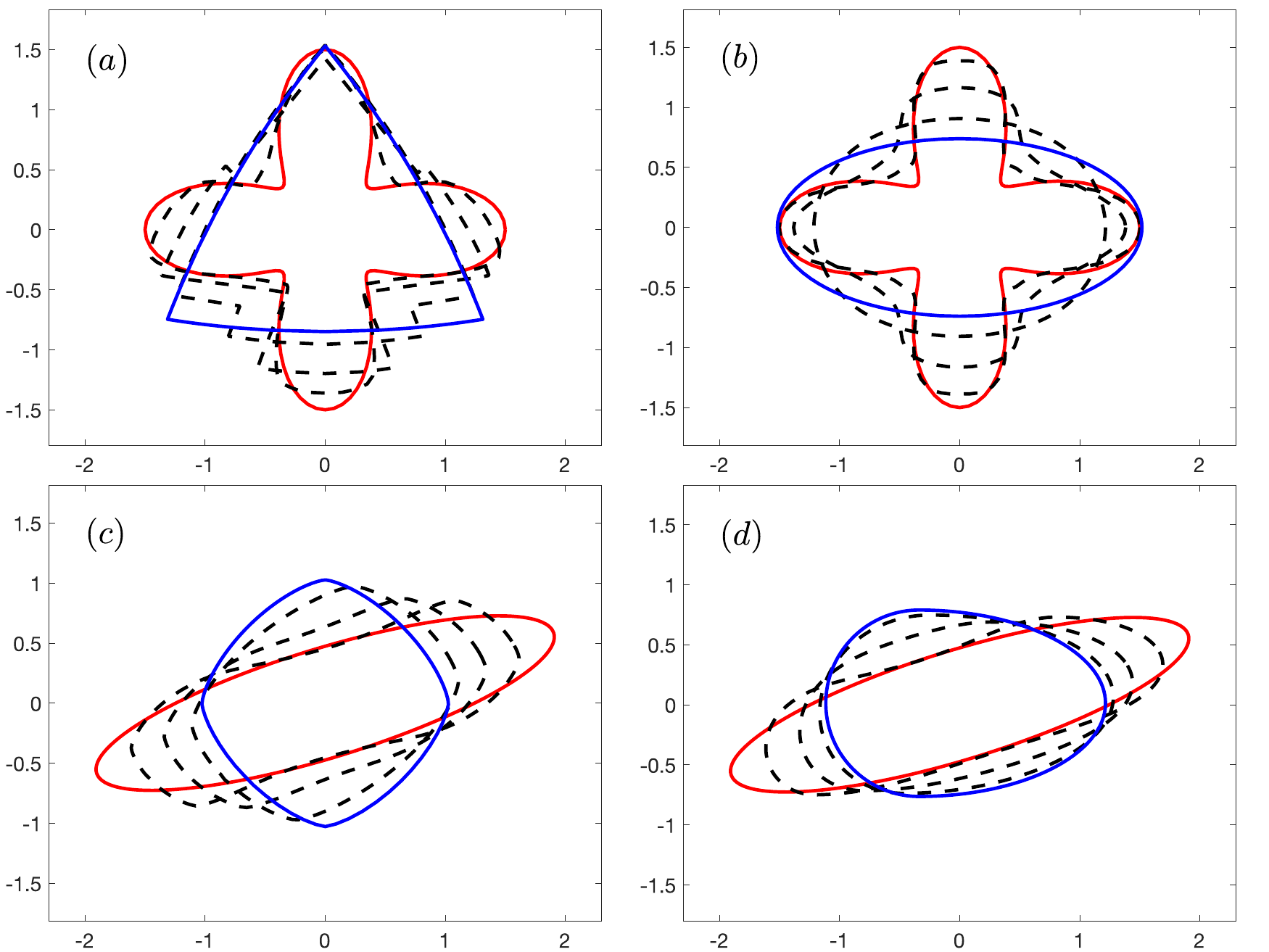}
\caption{Morphological evolutions of a four-fold star shape curve and a rotated ellipse under anisotropic surface diffusion with different surface energies: (a) anisotropy in Case I with $\beta=\frac{1}{2}$; (b) anisotropy in Case II with $b=-0.8$; (c) $\hat{\gamma}(\theta)=1+\frac{1}{16}\cos 4\theta$; (d) $\hat{\gamma}(\theta)=\sqrt{\left(\frac{5}{2}+\frac{3}{2}\text{sgn}(n_1)\right)n_1^2+n_2^2}$ with $\boldsymbol{n}=(n_1,n_2)^T=(-\sin\theta,\cos\theta)^T$. The red and blue lines represent the initial shape and the numerical equilibrium, respectively; and the black dashed lines represent the intermediate curves. The mesh size and the time step are taken as $h=2^{-7},\tau=2^{-10}$.} 
\label{fig: morphevolve2}
\end{figure}

And Fig.~\ref{fig: morphevolve2} illustrates the morphological evolutions under anisotropic surface diffusion with different surface energies for a four-fold star shape curve \begin{equation}
    \left\{\begin{array}{l}
        x=(1+0.5\cos 4\theta)\cos\theta, \\
        y=(1+0.5\cos 4\theta)\sin\theta,
    \end{array}\right.\theta\in 2\pi\mathbb{T},
\end{equation} and an ellipse (with major axis $4$ and minor axis $1$) rotated counterclockwise by $\frac{\pi}{10}$. 

\begin{figure}[t!]
\centering
\includegraphics[width=1.0\textwidth]{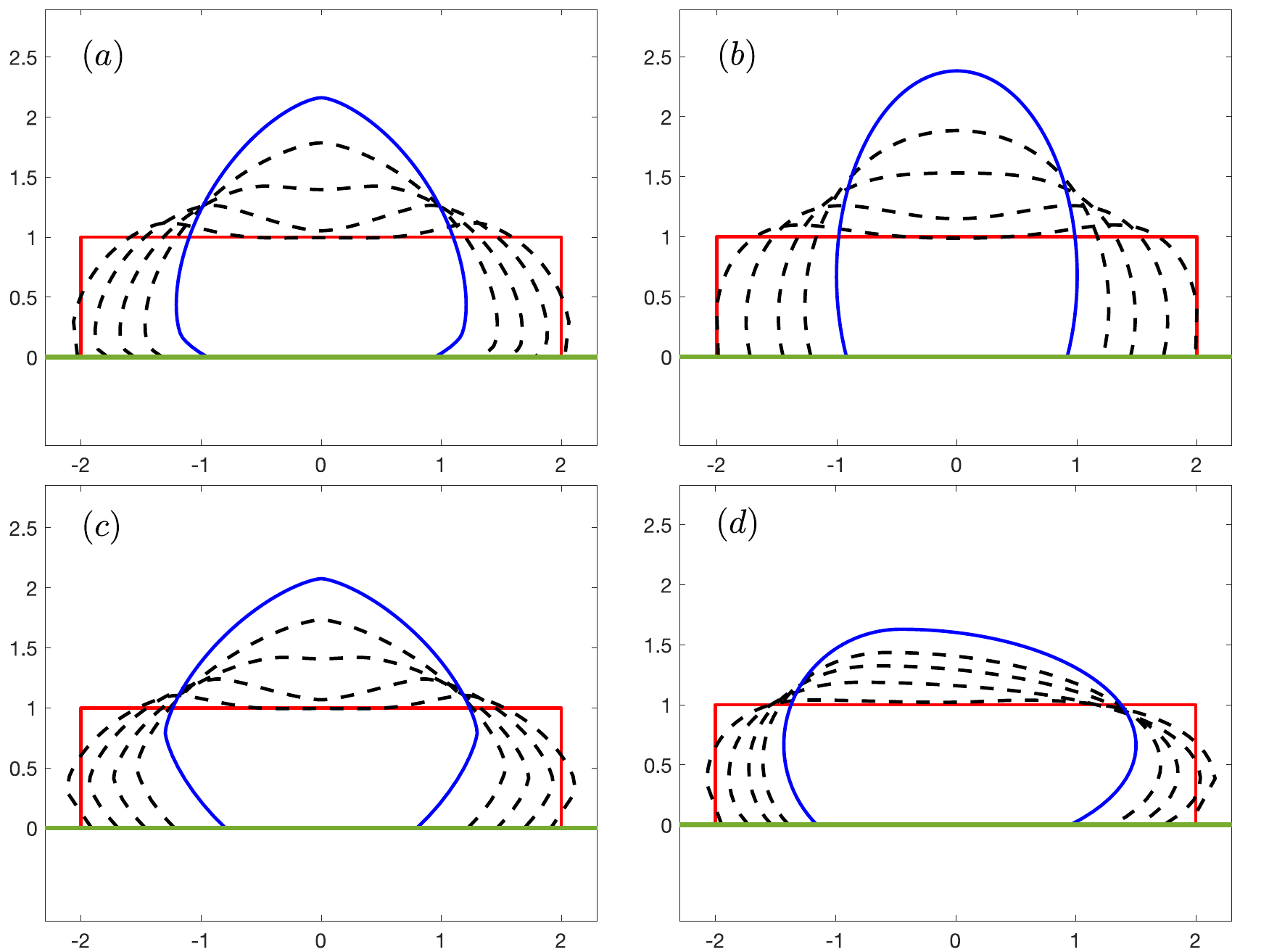}
\caption{Morphological evolutions of an open $4\times 1$ rectangle under anisotropic surface diffusion with different surface energies: (a) anisotropy in Case I with $\beta=\frac{1}{9}$; (b) anisotropy in Case II with $b=2$; (c) $\hat{\gamma}(\theta)=1+\frac{1}{16}\cos 4\theta$; (d) $\hat{\gamma}(\theta)=\sqrt{\left(\frac{5}{2}+\frac{3}{2}\text{sgn}(n_1)\right)n_1^2+n_2^2}$ with $\boldsymbol{n}=(n_1,n_2)^T=(-\sin\theta,\cos\theta)^T$. The red and blue lines represent the initial shape and the numerical equilibrium, respectively; and the black dashed lines represent the intermediate curves. The parameters are choosen as $\sigma=-\frac{\sqrt{2}}{2},h=2^{-7},\tau=2^{-10}$.} 
\label{fig: morphevolve_open}
\end{figure}

In Fig.~\ref{fig: morphevolve_open}, we display the morphological evolutions from an open $4\times 1$ rectangular curve to their equilibriums shapes with different surface energies: (a) anisotropy in Case I with $\beta=\frac{1}{9}$; (b) anisotropy in Case II with $b=2$; (c) the $4$-fold anisotropy $\hat{\gamma}(\theta)=1+\frac{1}{16}\cos 4\theta$; (d) $\hat{\gamma}(\theta)=\sqrt{\left(\frac{5}{2}+\frac{3}{2}\text{sgn}(n_1)\right)n_1^2+n_2^2}$ with $\boldsymbol{n}=(n_1,n_2)^T=(-\sin\theta,\cos\theta)^T$. 

Similar to the closed curve cases, the SPFEM \eqref{eqn:sp-pfem solid-state dewetting} extends the choices in surface energies for simulating solid-state dewetting (cf. Fig.~\ref{fig: morphevolve_open} (a) -- Fig.~\ref{fig: morphevolve_open} (c)). And Fig.~\ref{fig: morphevolve_open} (d) illustrates that our method also performs effectively for $\hat{\gamma}(\theta)$ with lower regularity.

\section{Conclusions}\label{section conclusions}

We propose a structure-preserving stabilized parametric finite element method (SPFEM) for the anisotropic surface diffusion. This method is subject to mild conditions on $\hat{\gamma}(\theta)$, and works effectively for closed curves and open curves with contact line migration in solid-state dewetting. By introducing a new stabilized surface energy matrix, we obtain a conservative form and its weak formulation for anisotropic surface diffusion. Based on this weak formulation, a novel SPFEM is proposed by utilizing the PFEM for spatial discretization and the implicit-explicit Euler method for temporal discretization. To analyze the unconditional energy stability, we extend the framework proposed by Bao and Li to the $\hat{\gamma}(\theta)$ formulation. This approach starts by defining the minimal stabilizing function, proving its existence, results in a local energy estimate, and subsequently establishes unconditional energy stability. Due to the very mild requirements on the surface energy $\hat{\gamma}(\theta)$, the methods are able to simulate over a broader range of anisotropies for both closed curves and open curves. Moreover, the SPFEMs are applicable for the globally $C^1$ and piecewise $C^2$ anisotropy as well, which is a capability not possessed by other PFEMs.


\section*{CRediT authorship contribution statement}

Yulin Zhang: Conceptualization, Methodology, Software, Investigation, Writing - Original Draft.\\

Yifei Li: Project administration, Supervision, Writing - Review \& Editing.\\

Wenjun Ying: Supervision, Writing - Review \& Editing.


\section*{Declaration of competing interest}

The authors declare that they have no known competing financial interests or personal relationships that could have appeared to influence the work reported in this paper.

\section*{Acknowledgement}

We would like to specially thank Professor Weizhu Bao for his valuable suggestions and comments. The work of Zhang was partially supported by the State Scholarship Fund (No.202306230346) by the Chinese Scholar Council (CSC) and the Zhiyuan Honors Program for Graduate Students of Shanghai Jiao Tong University (No.021071910051). The work of Li was funded by the Ministry of Education of Singapore under its AcRF Tier 2 funding MOE-T2EP20122-0002 (A-8000962-00-00). Part of the work was done when the authors were visiting the Institute of Mathematical Science at the National University of Singapore in 2024.

\section*{}

\end{document}